\newtheorem{assumption}[equation]{Assumption}
\newtheorem{corollary}[equation]{Corollary}
\newtheorem{lemma}[equation]{Lemma}
\newtheorem{proposition}[equation]{Proposition}
\newtheorem{remark}[equation]{Remark}
\newtheorem{theorem}[equation]{Theorem}
\numberwithin{equation}{section}
\numberwithin{figure}{section}
\DeclareMathOperator{\ord}{ord}
\DeclareMathOperator{\divisor}{div}
\DeclareMathOperator{\torsion}{torsion}
\DeclareMathOperator{\Pic}{Pic}
\DeclareMathOperator{\im}{Im}
\DeclareMathOperator{\re}{Re}
\DeclarePairedDelimiter\abs{\lvert}{\rvert}
\newcommand{\trisymbol}[6]{T_{#1,#2;#3,#4;#5,#6}}
\newcommand{\recsymbol}[6]{R_{#1,#2,#3;#4,#5,#6}}
\newcommand{\detsymbol}[2]{[#1,#2]}
\def\a{\alpha}
\def\b{\beta}
\def\c{\gamma}
\def\l{\lambda}
\def\v{\varepsilon}
\def\C{\mathbb C}
\def\F{\mathbb F}
\def\M{\mathbb{M}}
\def\N{\mathbb{M}'}
\def\P{\mathbb P}
\def\Q{\mathbb Q}
\def\R{\mathbb R}
\def\Z{\mathbb Z}
\def\CC{\mathcal{C}}
\def\DD{\mathcal{D}}
\def\OO{\mathcal{O}}
\def\PP{\mathfrak{P}}
\def\dd{\textup{d}}
\def\tP #1 #2 {\widetilde P_{#1,#2}}
\def\tPs #1 {\widetilde P_{#1}}
\def\ge{\geqslant}
\def\le{\leqslant}
\def\Spec{\textup{Spec}}
\def\hdr{H_{\textup{dR}}}
\def\INTC{K_2^T(C)_{\textup{int}}}
\def\ol{\overline}
\begin{document}

\title{On $K_2$ of certain families of curves}

\author{Hang Liu}
\address{School of Mathematical Sciences\\University of the Chinese Academy of Sciences\\Beijing 100049\\China}

\author{Rob de Jeu}
\address{Faculteit der Exacte Wetenschappen\\Afdeling Wiskunde\\VU University Amsterdam\\De Boelelaan 1081a\\1081 HV Amsterdam\\The Netherlands}

\begin{abstract}
We construct families of smooth, proper, algebraic curves in characteristic 0,
of arbitrary genus $g$, together with $g$ elements in the kernel of the tame symbol.
We show that those elements are in general independent by a limit
calculation of the regulator.
Working over a number field, we show that in some of those families
the elements are integral.
We determine when those curves are hyperelliptic, finding, in
particular, that over any number field we have non-hyperelliptic curves of all composite genera $ g $
with $ g $ independent integral elements in the kernel of the
tame symbol.
We also give families of elliptic curves over real quadratic
fields with two independent integral elements.
\end{abstract}

\subjclass[2010]{Primary: 19E08, 19F27; secondary: 11G30}

\keywords{$ K $-theory, curve, Beilinson's conjecture, regulator}

\maketitle

\section{Introduction} \label{introduction}

Let $ k $ be a number field, with ring of algebraic integers $ \OO_k $.
The classical relation between the residue at $ s=1 $ of the
zeta-function $ \zeta_k(s) $ and the regulator of $ \OO_k^* \simeq K_1(\OO_k) $,
was generalized by Borel to a relation between a regulator defined
for $ K_{2n-1}(k) = K_{2n-1}(\OO_k) $ and $ \zeta_k(n) $ for
$ n \ge 2 $ \cite{Borel77}.
Inspired by this, Bloch a few years later considered
CM-elliptic curves $ E $ over $ \Q $, and proved a relation between a regulator
associated to an element in $ K_2(E) $ and the value of its $ L $-function
at~2. This was finally published in \cite{Bl78}.
Beilinson in the meantime had made a very general conjecture
about similar relations between regulators of certain $ K $-groups
of regular, projective varieties over number fields, and values
of their $ L $-functions at certain integers (see, e.g., \cite{Schneider}).

Below we shall briefly review Beilinson's conjecture
on $ K_2 $ of a curve over
a number field. For more details  we refer the reader to
the first three sections of \cite{Jeu06}.

Let $ F $ be a field.
By a famous theorem of Matsumoto (see \cite[Theorem 4.3.15]{Rosenberg}),
the group $K_2(F)$ can be described explicitly as
$$F^* \otimes_\Z F^* \slash \langle a\otimes (1-a), a\in F, a\neq 0,1 \rangle,$$
where $\langle \cdots \rangle$ denotes the subgroup generated by the indicated elements.
The class of $a \otimes b$ is denoted $\{a,b\}$, so that $K_2(F)$ is an Abelian group
(written additively), with generators $\{a,b\}$ for $a$ and $b$ in $F^*$, and relations
\begin{eqnarray*}
\{a_1a_2,b\} &=& \{a_1,b\} + \{a_2,b\}\\
\{a,b_1b_2\} &=& \{a,b_1\} + \{a,b_2\}\\
\{a,1-a\} &=& 0 \quad \text{if } a \text{ is in } F, a\neq 0,1.
\end{eqnarray*}
These relations also imply $\{a, -a\}=0$ and $\{a, b\}=-\{b, a\}$.

Suppose that $ F $ is the function field of a regular, proper,
irreducible curve $ C $ over a field $ k $.
Then we let
\begin{equation*}
K_2^T(C) = \ker \biggl( K_2(F) \overset{T}{\longrightarrow} \bigoplus_{x\in C^{(1)}} k(x)^* \biggr)
\,,
\end{equation*}
where $ C^{(1)} $ denotes the set of closed (codimension~1) points
of $ C $, and the $x$-component of the map $T$ is the \emph{tame symbol} at $x$, defined
on generators by
\begin{equation}\label{eqn:tame}
T_x:\{a,b\} \mapsto (-1)^{\ord_x(a)\ord_x(b)}\frac{a^{\ord_x(b)}}{b^{\ord_x(a)}}(x).
\end{equation}
For
$\a$ in $K_2(F)$ we have the \emph{product formula}
\cite[Theorem~8.2]{Bass},
\begin{equation} \label{productformula}
\prod_{x\in C^{(1)}} \textup{Nm}_{k(x)/k}(T_x(\a)) = 1.
\end{equation}

Now assume that the base field $ k $ is a number field.  Then
Beilinson's conjecture in its original statement
(see \cite{Bei80})
applied to the group $ K_2(C) \otimes_\Z \Q = K_2^T(C) \otimes_\Z \Q $, but in \cite{Jeu06}
a (slightly stronger) formulation was given without tensoring with $ \Q $, and we
shall use this approach here.  Suppose that $ C $ has genus $ g $,
and that $ k $ is algebraically closed in $ F $ (so that $ C $ is
geometrically irreducible over $ k $).
Then Beilinson originally expected $ K_2^T(C) \otimes_\Z \Q $ to have dimension $ r = g \cdot [k:\Q] $.
However, computer calculations \cite{BG}
showed that this dimension could be larger and that an additional
condition should be used, which led to a modification of the
conjecture \cite{Be85}.
For this, let us fix a regular,
proper model $ \CC/\OO_k $  of $ C/k $, with $ \OO_k $ the ring
of algebraic integers in~$ k $. Then we define
\begin{equation*}
 K_2^T(\CC) = \ker\biggl( K_2(F) \overset{T_\CC}{\rightarrow} \oplus_\DD \F(\DD)^* \biggr)
\,,
\end{equation*}
where $\DD$ runs through all irreducible curves on $ \CC $,
and $\F(\DD)$ is the residue field at $\DD$.
The component of $T_\CC$
for $ \DD $ is given by the tame symbol corresponding to $\DD$ similar to~\eqref{eqn:tame},
\begin{equation*}
\{a,b\} \mapsto
(-1)^{v_{\DD}(a)v_{\DD}(b)}\frac{a^{v_{\DD}(b)}}{b^{v_{\DD}(a)}}(\DD)
\,,
\end{equation*}
where $v_{\DD}$ is the valuation on $F$ corresponding to $\DD$.
Because the $ \DD $ that surject onto $ \Spec(\OO_k) $ correspond
exactly to the points $ x $ in $ C^{(1)} $ under localization, and the
formula for the $ \DD $-component of $ T_\CC $ localizes to the
one in~\eqref{eqn:tame} for the corresponding point~$ x $, it
follows that $ K_2^T(\CC) $ is a subgroup of $ K_2^T(C) $.
It was stated without proof on \cite[p.~344]{Jeu06} that the
image of $ K_2^T(\CC) $ in $ K_2^T(C) $ modulo torsion is
independent of the choice of $ \CC $, but in fact the subgroup
$ K_2^T(\CC) $ of $ K_2^T(C) $ is independent of this choice
(see Proposition~\ref{modelchoice} below).
We shall denote it by $ \INTC $, and call its elements \emph{integral}.

Let $ X $ be the complex manifold associated to $ C \times_\Q \C $.
It is a disjoint union of $ [k:\Q] $ Riemann surfaces of genus~$ g $,
and complex conjugation acts on it through the action on $ \C $
in $ C \times_\Q \C $.  We let
$ \hdr^1(X , \R)^- $ consist of those elements that are multiplied
by $ -1 $ under the resulting action on $ X $.
It is a real vector space of dimension~$ r $.
It can be paired with $ H_1(X, \Z)^- $, the part
of $ H_1(X, \Z) $ on which complex conjugation on $ X $ induces
multiplication by $ -1 $, which is isomorphic with~$ \Z^r $.
Dividing Beilinson's regulator map by $ i $ we obtain a map
$ K_2^T(C) /\torsion  \to \hdr^1(X, \R)^- $,
and combining this with the pairing we obtain
the \emph{regulator pairing}
\begin{equation} \label{eqn:pairing}
\begin{split}
\langle \,\cdot\, , \,\cdot\, \rangle : H_1(X;\Z)^- \times K_2^T(C)/\torsion \to \R
\\
(\c, \a) \mapsto
\frac{1}{2\pi}\int_{\c}\eta(\a)
\,,
\end{split}
\end{equation}
with $ \eta(\a) $ obtained by writing $ \a $ as a sum of symbols
$ \{a,b\} $, and mapping $ \{a,b\} $ to
\begin{equation} \label{eqn:eta}
\eta(a,b) = \log|a| \dd \arg(b) - \log|b| \dd \arg(a)
\,,
\end{equation}
and $\c$ is
chosen such that $\eta(\a)$ is defined. The pairing is well-defined
\cite[Section~3]{Jeu06}.  If $\c_1,\cdots,\c_r$ form a
basis of $H_1(X;\Z)^-$, and $M_1,\cdots,M_r$ are in
$ K_2^T(C) $ or $K_2^T(C)/\torsion$, we can define the regulator $R(M_1,\cdots,M_r)$ by
\begin{equation}\label{eqn:regulator}
R=|\det(\langle \c_i, M_j \rangle)|.
\end{equation}
Beilinson expects $ \INTC \otimes_\Z \Q $ to have $ \Q $-dimension
$ r $, that $ R \ne 0 $ if $ M_1,\dots, M_r $ form a basis of
it, and that $ R $ is related to the value of $ L(H^1(C),s) $ at $ s=2 $
(see \cite[Conjecture~3.11]{Jeu06}).

The proof of Proposition~\ref{modelchoice} shows that
$ \INTC $ is a quotient of $ K_2(\CC) $, which is expected to
be finitely generated by a conjecture of Bass. If that is the
case, and $ M_1,\dots,M_r $ form a $ \Z $-basis of $ \INTC/\torsion $,
then $ R $ is also independent of the choice of this basis.

That $ \INTC $ is finitely generated is only known if $ g=0 $.
Most of the work on the conjecture has been put into constructing $ r $ independent
elements in $ \INTC/\torsion $ and, if possible, relating the
resulting regulator with the $ L $-value either numerically
or theoretically (see, e.g., \cite{De89, De90, Ross92, Kimura, rol, scha-rols98, Jeu06,Otsubo}).

The goal of this paper is
twofold. Firstly, we construct $ g $ elements in $K_2^T(C)$ on certain families of curves of genus $ g $.
The curves here are in general not hyperelliptic but include
the curves in \cite{Jeu06} as a special case.
If the base field is a number field then we show that, for suitable
parameters, the elements are integral.
Secondly, we consider the curves in certain families over a 1-dimensional
parameter space as a fibred surface, construct a family of loops
in the fibres, and establish a limit formula of the regulator
pairing as a
function of the parameter.  This in the end amounts to a residue
calculation on the bad fibre.
As a consequence we find that, over any number field,
we obtain families with $ g $ linearly independent elements
in $ \INTC $.
In particular, if the base field is $ \Q $ then we have as many
linearly independent elements in $ \INTC $ as predicted by Beilinson's
conjecture.

The authors wish to thank Ulf K\"uhn and
Steffen M\"uller for discussing their
constructions using hyperflexes in \cite{K-S} in preliminary form.
Those played an important role in the process of arriving at our, in the end entirely
different, construction in Section~\ref{section:construction}.
The authors would like to thank Xi Chen, James Lewis,
Deepam Patel and Tomohide Terasoma for useful conversations and/or correspondence,
as well as the referees for useful comments.

Hang Liu gratefully acknowledges financial support by
the China Scholarship Council and the National Natural Science Foundation of China (No. 11371343),
as well as the hospitality of VU University Amsterdam.

The structure of the paper is as follows.
In Section~\ref{section:construction}, we construct elements of $K_2^T$ on some families of curves.
The equations of the curves and the elements in $ K_2^T(C) $
are all based on lines in $ \P^2 $; see~\eqref{eqn:curve}, \eqref{eqn:rectangle}
and~\eqref{eqn:triangle}.
By giving explicit relations between these elements in
Lemma~\ref{relations},
we show that in general the number of independent elements
we find is at most the genus of the curve (see Proposition~\ref{prop:basis}
and Remark~\ref{remark:genus}).
In Section~\ref{section:integrality},
we show in Proposition~\ref{modelchoice} that the definition
of $ \INTC = K_2^T(\CC) $ is independent of the model $ \CC $, and in
Theorem~\ref{thm:main} that, for some of our families, the
elements we constructed are in $ \INTC $.
Proposition~\ref{prop:hyperelliptic} makes precise when those curves
are hyperelliptic, and shows in particular that they are not
in general. In the remainder of Section~\ref{section:hyperelliptic}
we consider our hyperelliptic families that are not covered by~\cite{Jeu06}.
In Section~\ref{section:linearindependence}, we use $ \C $ as
our base field and in Theorem~\ref{thm:limit}
prove a limit formula for the regulator pairing under some conditions, which shows that the elements
that we constructed are, `in general', linearly independent.
In particular, in Corollary~\ref{cor:int} we find that the families of curves
of genus $ g $ that we
constructed in Section~\ref{section:integrality}, have, in general, $ g $
independent elements in $ \INTC $.
We finish that section by also giving examples of elliptic curves over real quadratic
fields with two independent integral elements.

\section{Construction of elements in $K_2^T$ for certain curves} \label{section:construction}

Let $ C $ be a regular, proper, irreducible curve over a field, and let $ F $
be its function field.
The simplest way to construct elements in $K_2^T(C)$ is to use two
functions with only three zeroes and poles in total, as we now
recall from~\cite[Construction 4.1]{Jeu06}.

Assume $P_1,P_2,P_3$ are distinct rational points of $ C $ whose pairwise differences are
torsion divisors.  Thus, using indices modulo~3, there are rational functions $f_i$ with
$ \divisor(f_i)=m_i(P_{i+1})-m_i(P_{i-1}) $, where $m_i$ is the order of
$(P_{i+1})-(P_{i-1})$ in the divisor group $\Pic^{0}(C)$. We then
define three elements of $K_2(F)$ by
\begin{equation*}
\Big \{ \frac{f_{i+1}}{f_{i+1}(P_{i+1})}, \frac{f_{i-1}}{f_{i-1}(P_{i-1})} \Big \}
\,.
\end{equation*}
Using the product formula~\eqref{productformula},
one sees that those elements are in $K_2^T(C)$.

We shall now construct regular, proper, irreducible curves to
which the above applies. Fix an integer $ N \ge 3 $.
For $ i=1,\dots,N $, let $ L_i $ be a non-constant polynomial of
the form $ a_ix+b_iy+c_i $ such that the lines defined by $ L_i = 0 $
are distinct and pairwise non-parallel.
The affine curve defined by $\prod_{i=1}^N L_i - 1 = 0 $
is irreducible by Lemma~\ref{irrlemma} below.
Its projective closure $ C' $ in $ \P^2 $
has $N$ distinct points $P_i=[-b_i,a_i,0]$ at infinity, all non-singular.
The normalisation $ C $ of $ C' $
contains points $ \tPs i $ corresponding to $ P_i $
for $ i=1,\dots,N $.
Viewing the $ L_i $ as non-zero rational functions on $ C $, we have
$\divisor(\frac{L_i}{L_j}) = N(\tPs i ) - N(\tPs j )$.
So, for every three distinct points $ \tPs {i_1} , \tPs {i_2} . \tPs {i_3} $
among $ \tPs 1 ,\dots, \tPs N $, we obtain elements in $ K_2^T(C) $
using the functions $ f_j = L_{i_k}/L_{i_l} $, where $ \{j,k,l\} = \{1,2,3\} $.

The $P_i$ are hyperflexes of~$ C' $
(in fact, this example was inspired by \cite{K-S}, where hyperflexes are
used in order to construct elements in $K_2^T(C)$ for suitable~$ C $).
However, in order to prove the resulting elements in $ K_2^T(C) $
are integral if our base field is a number field and all our
coefficients are in the ring of integers,
it is crucial that, modulo each prime ideal, all $ P_i $ have different reductions
(see the proof of Theorem~\ref{thm:main}).
This bounds $ N $, and hence the genus of~$ C $.
For example, if the base field is $ \Q $,
this imposes $ N \le 3 $, hence $ N=3 $ and $ g = 1 $.
But it turns out that it is not crucial that the $ P_i $ are
hyperflexes, and that we can allow parallel lines.

Namely, we shall construct regular, proper, irreducible curves
$ C $ together with
elements in $ K_2^T(C) $ that are in general different from the
ones in the set-up at the beginning of this section because the divisors
of the functions involved are more complicated (see Remark~\ref{torsionremark}
below).

For this, let $ N \ge 2 $, and for $ i=1,\dots, N $, let $ N_i \ge1 $.
For $ i=1,\dots,N$ and $ j=1,\dots,N_i $, let $ L_{i,j} $
be a non-constant polynomial $a_ix+b_iy+c_{i,j}$ such that
the lines defined by $ L_{i,j} = 0 $
are distinct, and non-parallel for distinct $ i $.
Consider the affine curve defined by
$ f(x,y) = 0 $ for
\begin{equation} \label{eqn:curve}
f(x,y)=\l \prod_{i=1}^N \prod_{j=1}^{N_i} L_{i,j}-1, \quad \l \neq
0
\end{equation}
with $ \l $ in the base field.

\begin{lemma} \label{irrlemma}
The affine curve defined by $ f(x,y) = 0 $ with $ f(x,y) $ as
in~\eqref{eqn:curve} is irreducible.
\end{lemma}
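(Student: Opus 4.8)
The plan is to work with the polynomial $g(x,y) = \l \prod_{i,j} L_{i,j}$, so that $f = g - 1$, and to argue that any factorization of $f$ in $\ol{k}[x,y]$ (with $k$ the base field) must be trivial. First I would reduce to the case where $k = \ol{k}$ is algebraically closed, since irreducibility of $f$ over $\ol{k}$ implies irreducibility over $k$. Next, note that $g$ is a product of linear forms $L_{i,j}$, grouped into $N \ge 2$ ``directions''; within direction $i$ all the $L_{i,j}$ have the same linear part $a_i x + b_i y$, and distinct directions have linearly independent linear parts. The total degree of $f$ equals the total degree of $g$, namely $d = \sum_i N_i$, and the homogeneous leading form of $f$ is $\l \prod_{i,j}(a_i x + b_i y) = \l \prod_{i=1}^N (a_i x + b_i y)^{N_i}$ (here $y$-degree considerations: since there are at least two distinct directions, this leading form is not a power of a single linear form, and in fact has at least two distinct linear factors).

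Suppose $f = PQ$ with $P, Q \in \ol{k}[x,y]$ non-constant. Comparing leading forms, the leading form of $P$ times the leading form of $Q$ equals $\l\prod_i(a_ix+b_iy)^{N_i}$; in particular the leading forms of $P$ and $Q$ are (up to scalars) products of the $(a_ix+b_iy)$, and they are coprime only if all the factors of one kind go to $P$ and the rest to $Q$ — but since $N \ge 2$ there are at least two distinct linear factors $a_1x+b_1y$ and $a_2x+b_2y$ available. I would split into two cases. \emph{Case 1: the leading forms of $P$ and $Q$ share a common factor}, say $a_1x+b_1y \mid \operatorname{lead}(P)$ and $a_1x+b_1y\mid\operatorname{lead}(Q)$. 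Then on the line at infinity the projective closure of $\{f=0\}$ meets the point $P_1 = [-b_1, a_1, 0]$ with multiplicity at least $2$; but one computes directly from $f$ (substituting to put $P_1$ at a finite coordinate chart, or using that near infinity $\l\prod L_{i,j} = 1$ forces $\prod_j L_{1,j}$ to be a unit times $(a_1x+b_1y)^{-N_1}\cdot(\text{lower order})$) that the curve is actually smooth at each $P_i$ with a single branch — indeed the $P_i$ are the hyperflexes mentioned in the text, or more simply, in a suitable affine chart $f$ becomes, up to a unit, $u^{N_i} - v^{N_i + \cdots}$-type equation with a single place above. This contradicts $P_1$ lying on both components. \emph{Case 2: the leading forms of $P$ and $Q$ are coprime.} Then $P$ and $Q$ meet in $(\deg P)(\deg Q)$ points in $\P^2$ counted with multiplicity, all of which lie in the affine plane (none at infinity, by coprimality of leading forms), and every such point lies on $\{f=0\}$, hence satisfies $\l\prod_{i,j}L_{i,j} = 1$. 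But at any such intersection point $\prod_{i,j} L_{i,j} = 1/\l \ne 0$, so no $L_{i,j}$ vanishes there; meanwhile $\{P = 0\}\cap\{Q=0\}$ is contained in $\{f=0\}$ which is disjoint from $\bigcup_{i,j}\{L_{i,j}=0\}$. I would then derive a contradiction by a degree/intersection count: the gradient condition forces $P \mid \sum_{i,j}(\prod_{(k,l)\ne(i,j)} L_{k,l})\,\dd L_{i,j}$, i.e.\ $P$ divides the polynomial $g\cdot\sum_{i,j} L_{i,j}^{-1}\dd L_{i,j}$ in each partial; comparing with $g = PQ + 1$ one finds $P \mid \partial g/\partial x$ and $P \mid \partial g/\partial y$, and since $\deg(\partial g/\partial x) < \deg g$ unless $\partial g/\partial x$ involves... — more cleanly, $P \mid x\,g_x + y\,g_y - d\,g = -d$ (Euler), forcing $P$ constant, contradiction.

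The main obstacle is making the ``single branch / smooth at infinity'' claim of Case~1 airtight without a long computation, and cleanly handling the degenerate sub-case where $g_x$ or $g_y$ vanishes identically (which happens only if all lines are parallel to an axis, excluded since $N\ge 2$ directions are independent). I expect the Euler-relation trick ($P\mid xg_x+yg_y-dg=-d$, using that $P$ divides $g_x$ and $g_y$ because $\{P=0\}\subset\{g=1\}$ is a smooth-point locus of the level set) to be the crispest route, so I would try to make \emph{that} the whole argument: show directly that if $f=PQ$ then every point of $\{P=0\}$ is a point where $g$ takes the value $1$, differentiate along the (1-dimensional) locus $\{P=0\}$ to get $P\mid g_x$ and $P\mid g_y$ up to the issue of $\gcd(P, L_{i,j})$, and conclude via Euler that $P$ is constant — bypassing the case split entirely.
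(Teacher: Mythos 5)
Your approach via leading forms, branch analysis at infinity, and B\'ezout is genuinely different from the paper's, but both of your cases have real gaps. In \emph{Case 1}, the claim that the projective closure is smooth with a single branch at each $P_i$ is false whenever $N_i\ge 2$: the paper itself notes (just before the genus formula) that $P_i=[-b_i,a_i,0]$ has multiplicity $N_i$ on $C'$, with the $N_i$ distinct tangent directions given by the lines $L_{i,j}=0$, hence $N_i$ distinct branches $\widetilde P_{i,1},\dots,\widetilde P_{i,N_i}$. A priori those branches could be distributed between two components, so the fact that $\operatorname{lead}(P)$ and $\operatorname{lead}(Q)$ share the factor $a_ix+b_iy$ produces no contradiction when $N_i\ge 2$.

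In \emph{Case 2} there are three problems. First, from $f=PQ$ the gradient of $f$ vanishes only at the finitely many points where $P=Q=0$, not along the whole curve $P=0$, so you do not obtain the polynomial divisibilities $P\mid g_x$ and $P\mid g_y$. Second, the Euler identity $xg_x+yg_y=dg$ requires $g=\l\prod L_{i,j}$ to be homogeneous, which it is not (the constants $c_{i,j}$ are present; in the proof of Lemma~\ref{lemma:smooth} the paper applies Euler only to the homogeneous sub-product $u$ of the lines through a fixed point). Third, and most fundamentally, any argument of the shape ``a crossing of two components is a singular point of the affine curve, contradiction'' cannot work for all $\l\ne 0$: Lemma~\ref{lemma:smooth} guarantees smoothness only for all but finitely many $\l$, and for the exceptional values the affine curve really is singular yet still irreducible --- e.g.\ $16\,x(x+1)y(y+1)-1=0$ is singular at $(-1/2,-1/2)$. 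So exhibiting a singular point on $\{g=1\}$ yields no contradiction, and ruling out that it is a crossing of components is exactly what is to be proved. The paper's proof avoids all of this with an elementary algebraic argument: normalise $L_{1,1}=x$, $L_{2,1}=y$, write a putative factorisation as $(h_1+1)(h_2-1)$ with $h_1(0,0)=h_2(0,0)=0$, and reduce modulo each $L_{i,j}$ (where $f\equiv -1$, so both factors become units, hence constants, in a one-variable polynomial ring) to conclude that every $L_{i,j}$ divides $h_1$ and $h_2$; a degree count then forces $h_1$ or $h_2$ to be zero. You may want to look for an argument of that purely algebraic flavour rather than a geometric one.
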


\begin{proof}
It suffices to show that $ f(x,y) $ is irreducible.
Without loss of generality,
suppose $L_{1,1}=x$ and $L_{2,1}=y$. If $f=(h_1+1)(h_2-1)$ with
$h_1(0,0)=h_2(0,0)=0$,
it is easy to see that $x$ and $y$ both divide $h_1$ and $h_2$.
Furthermore, one can show that all $L_{i,j}$ divide $h_1$ and $h_2$.
Namely, we have $(h_1+1)(h_2-1) \equiv f \equiv -1 $ modulo~$ L_{i,j} $.
If $i=1$, $h_1$ and $h_2$ are equivalent to functions of $y$ without constant term modulo $L_{i,j}$,
which forces $h_1$ and $h_2$ to be $0$ modulo $L_{i,j}$.
If $i\ne 1$, $h_1$ and $h_2$ are equivalent to functions of $x$ without constant term modulo $L_{i,j}$,
which also forces $h_1$ and $h_2$ to be $0$ modulo $L_{i,j}$.
Because the $L_{i,j}$ are pairwise coprime,
$\prod_{i=1}^N \prod_{j=1}^{N_i} L_{i,j}$ divides $h_1$ and $h_2$,
hence $h_1$ or $h_2$ equals 0, showing that $f$ is irreducible.
\end{proof}

Let $ C' $ be the projective closure in $ \P^2 $ of the affine curve defined by $ f(x,y) = 0 $
with $ f(x,y) $ as in~\eqref{eqn:curve}, so $ C' $ has the points $ P_i = [-b_i,a_i,0] $
at infinity.
Let $C$ be its normalisation, with function field~$ F $.
The $ L_{i,j} $ define non-zero rational functions on $ C $
so we view them as elements of $ F^* $.
Note that for each $ i $, the points on $ C $ lying
above $ P_i $ correspond to the tangent lines to~$ C' $ at $ P_i $.
Those correspond to the affine lines defined by $ L_{i,j} = 0 $
for $ j=1,\dots,N_i $. Therefore the points in $ C $ above $ P_i $
are $ \tP i j $ for $ j=1,\dots,N_i $, with $ \tP i j $
corresponding to~$ L_{i,j} $.

We now construct the elements in $ K_2^T(C) $.
In order to simplify the notation, we denote
$a_ib_k-a_kb_i$ by $\detsymbol{i}{k}$, so
$\detsymbol{i}{k} = -\detsymbol{k}{i}$,
and $\detsymbol{i}{k} \neq 0$ if $i \neq k$. We then define two types of elements in $K_2(F)$ by
\begin{eqnarray}
\recsymbol{i}{j}{k}{l}{m}{n} &=& \left\{ \frac{L_{i,j}}{L_{i,k}}, \frac{L_{l,m}}{L_{l,n}}\right\}, \quad i\neq l
\label{eqn:rectangle}
\\
\trisymbol{i}{j}{k}{l}{m}{n} &=& \left\{ \frac{\detsymbol{i}{m}}{\detsymbol{k}{m}} \frac{L_{k,l}}{L_{i,j}}, \frac{\detsymbol{i}{k}}{\detsymbol{m}{k}} \frac{L_{m,n}}{L_{i,j}}\right\}, \quad i,k,m \text{ distinct}
\label{eqn:triangle}
\,.
\end{eqnarray}
The $ R $-element is constructed from two pairs of parallel lines
(forming a parallelogram or rectangle), the $ T $-element from
three pairwise non-parallel lines (forming a triangle).

Clearly
$
 \trisymbol{i}{j}{k}{l}{m}{n}
=
\left\{
\frac{\detsymbol{i}{m} L_{k,l}}{\detsymbol{k}{m} L_{i,j}}, \frac{\detsymbol{i}{k} L_{m,n}}{\detsymbol{i}{m} L_{k,l}}
\right\}
=
 - \trisymbol{k}{l}{i}{j}{m}{n}
$
is alternating under permutation of the three pairs
$ (i,j) $, $ (k,l) $ and $ (m,n) $.
The $ R $-element satisfies similar symmetries. Those will
be stated in Lemma~\ref{relations} below, together with
some relations among the elements, but we first show all elements
have trivial tame symbol.

\begin{lemma} \label{K2Tlemma}
The $\recsymbol{i}{j}{k}{l}{m}{n}$ and $\trisymbol{i}{j}{k}{l}{m}{n}$ are
in $K_2^T(C)$.
\end{lemma}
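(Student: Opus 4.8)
The plan is to verify directly that the tame symbol $ T_x $ of each element is trivial at every closed point $ x $ of $ C $. On the affine curve defined by~\eqref{eqn:curve} one has $ \l\prod_{i,j}L_{i,j}=1 $, so each $ L_{i,j} $ is a unit in the local ring of $ C $ at every point lying over the affine part. Since $ \recsymbol{i}{j}{k}{l}{m}{n} $ and $ \trisymbol{i}{j}{k}{l}{m}{n} $ are single symbols whose two entries are products of the $ L_{i,j} $ by nonzero constants from the base field, both entries are units at such points and $ T_x $ is automatically trivial. So the whole question reduces to the points $ \tP i j $ of $ C $ lying over the points $ P_i $ at infinity.

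The core of the argument is a divisor computation at infinity. I would first prove, for all $ i $ and $ j $,
\begin{equation*}
\divisor(L_{i,j}) = m_i\,(\tP i j ) - \sum_{i'\ne i}\,\sum_{j'=1}^{N_{i'}}(\tP {i'} {j'} ),
\qquad
m_i := \sum_{i'\ne i}N_{i'}\ge 1 .
\end{equation*}
Fix $ i $. After a linear change of coordinates we may assume $ a_i=1 $ and $ b_i=0 $, so in the chart $ Y\ne 0 $ of $ \P^2 $ with $ s=X/Y $, $ t=Z/Y $ we have $ P_i=(0,0) $, $ L_{i,j}=(s+c_{i,j}t)/t $, and near $ P_i $ the equation of $ C' $ becomes
\begin{equation*}
\l\,U\cdot\prod_{j'=1}^{N_i}(s+c_{i,j'}t)-t^{d}=0,
\qquad d:=\sum_{i'}N_{i'},
\end{equation*}
where $ U $ is a unit at $ P_i $ because for $ i'\ne i $ the line $ L_{i',j'}=0 $ does not pass through $ P_i $. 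As $ d>N_i $ and the $ c_{i,j'} $ are distinct, $ P_i $ is an ordinary $ N_i $-fold point of $ C' $; hence the $ N_i $ branches through it are smooth and transverse to the line at infinity, $ t $ is a uniformiser at each $ \tP i {j'} $, and comparing orders in the last equation gives $ \ord(s+c_{i,j}t)=m_i+1 $ along $ \tP i {j} $ and equals $ 1 $ along the other branches. Since $ L_{i,j}=(s+c_{i,j}t)/t $, this pins down $ \ord(L_{i,j}) $ over $ P_i $; and $ L_{i,j} $ is a unit over the affine part, while over each $ P_{i'} $ with $ i'\ne i $ the linear form $ a_iX+b_iY+c_{i,j}Z $ does not vanish and the same local analysis gives $ \ord(L_{i,j})=-1 $. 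This yields the divisor formula. The same local picture records the relevant values at infinity: for $ i'\ne a $ and $ k'\ne a $, the function $ L_{k',l'}/L_{i',j'} $ is a unit at every point of $ C $ over $ P_a $, with value $ \detsymbol{a}{k'}/\detsymbol{a}{i'} $, obtained by evaluating the two homogeneous linear forms at $ P_a=[-b_a,a_a,0] $ and independent of $ l' $, $ j' $ and of the point.

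Granting this, the case of $ \recsymbol{i}{j}{k}{l}{m}{n}=\{L_{i,j}/L_{i,k},\,L_{l,m}/L_{l,n}\} $ is immediate. Indeed $ \divisor(L_{i,j}/L_{i,k}) $ is supported over $ P_i $ and $ \divisor(L_{l,m}/L_{l,n}) $ over $ P_l $, so since $ i\ne l $ at most one entry has nonzero order at any given $ x $; at such a point the other entry is of the shape $ L_{i,\bullet}/L_{i,\bullet} $ (respectively $ L_{l,\bullet}/L_{l,\bullet} $), which equals $ 1 $ there by the value formula, and hence $ T_x $ is trivial.

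For $ \trisymbol{i}{j}{k}{l}{m}{n}=\{h_1,h_2\} $ with $ h_1=\frac{\detsymbol{i}{m}}{\detsymbol{k}{m}}\frac{L_{k,l}}{L_{i,j}} $ and $ h_2=\frac{\detsymbol{i}{k}}{\detsymbol{m}{k}}\frac{L_{m,n}}{L_{i,j}} $, the divisor of $ h_1 $ is supported over $ P_i $ and $ P_k $ and that of $ h_2 $ over $ P_i $ and $ P_m $, so since $ i,k,m $ are distinct they interact only over $ P_i $. Over $ P_k $ the entry $ h_2 $ is a unit, and the constant $ \detsymbol{i}{k}/\detsymbol{m}{k} $ is chosen, using $ \detsymbol{a}{b}=-\detsymbol{b}{a} $, so that the value formula forces $ h_2=1 $; symmetrically $ h_1=1 $ over $ P_m $; hence $ T_x $ is trivial over $ P_k $ and over $ P_m $. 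Over $ P_i $ the parts of $ \divisor(h_1) $ and $ \divisor(h_2) $ supported there coincide, since both come only from the common denominator $ L_{i,j} $, so $ \ord_x h_1=\ord_x h_2=-r $ at each point $ x $ over $ P_i $ and, by~\eqref{eqn:tame},
\begin{equation*}
T_x\{h_1,h_2\}=(-1)^{r^2}\,(h_2/h_1)^{r}(x) .
\end{equation*}
Applying the value formula to $ L_{m,n}/L_{k,l} $ over $ P_i $ gives $ (h_2/h_1)(x)=-1 $, whence $ T_x\{h_1,h_2\}=(-1)^{r^2+r}=1 $ since $ r(r+1) $ is even. I expect the genuinely delicate step to be the divisor computation at infinity — identifying $ P_i $ as an ordinary $ N_i $-fold point and extracting the order $ m_i+1 $ along the branch $ \tP i {j} $; after that, the $ R $-element is a one-line disjoint-support argument and the $ T $-element is bookkeeping with the $ \detsymbol{i}{k} $ together with the parity of $ r(r+1) $.
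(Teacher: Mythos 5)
Your proof is correct, and at its core it follows the same strategy as the paper's: both entries of each symbol are units away from the points at infinity, so everything reduces to the points $ \tP i * $, where one exploits that the constants $ \detsymbol{i}{k} $ are arranged so that the \emph{other} entry of the symbol takes the value $ 1 $. For the $ R $-elements, and for the points over $ P_k $ and $ P_m $ in the $ T $-case, your argument is exactly the paper's. Where you genuinely diverge is at the points over $ P_i $ for $ \trisymbol{i}{j}{k}{l}{m}{n} $, where both entries have poles: the paper disposes of these by invoking the alternating symmetry $ \trisymbol{i}{j}{k}{l}{m}{n} = -\trisymbol{k}{l}{i}{j}{m}{n} $ established just before the lemma, which reduces them to the already-treated case, whereas you compute directly that $ \ord_x h_1 = \ord_x h_2 = -r $ and $ (h_2/h_1)(x) = -1 $, so that $ T_x = (-1)^{r^2}(-1)^r = 1 $. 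Your route costs more work, in particular the full divisor formula $ \divisor(L_{i,j}) = m_i (\tP i j ) - \sum_{p \ne i} \sum_{q=1}^{N_p} (\tP p q ) $, which the paper only records afterwards (Remark~\ref{torsionremark}) and never needs for this lemma; but it buys an explicit local picture at infinity (ordinary $ N_i $-fold points, orders along the branches) that the paper takes for granted when identifying the points of $ C $ above $ P_i $ with the $ \tP i j $. One small slip: over $ P_i $ the equality of the local parts of $ \divisor(h_1) $ and $ \divisor(h_2) $ is not due \emph{only} to the common denominator $ L_{i,j} $ --- the numerators $ L_{k,l} $ and $ L_{m,n} $ also each acquire a simple pole at every point over $ P_i $ --- but since those two contributions agree, the conclusion $ \ord_x h_1 = \ord_x h_2 $ stands and the proof is unaffected.
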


\begin{proof}
First we compute the tame symbol of $\recsymbol{i}{j}{k}{l}{m}{n}$.
Obviously, $\frac{L_{i,j}}{L_{i,k}}$ and $\frac{L_{l,m}}{L_{l,n}}$
only have zeroes and poles at the $ \tP i * $ and $ \tP l * $,
hence the tame symbol is trivial at all other points.
Since $\frac{L_{i,j}}{L_{i,k}}(\tP l * )=\frac{L_{l,m}}{L_{l,n}}(\tP i * )=1$,
it is also trivial at these points.

Now consider $\trisymbol{i}{j}{k}{l}{m}{n}$.
Denote $\frac{\detsymbol{i}{m}}{\detsymbol{k}{m}} \frac{L_{k,l}}{L_{i,j}}$ and
$\frac{\detsymbol{i}{k}}{\detsymbol{m}{k}} \frac{L_{m,n}}{L_{i,j}}$ by $h_1$ and $h_2$ respectively.
Then $h_1$ has zeroes and poles only at the $ \tP i * $
and $ \tP k * $, and  $h_2$ only at the
$ \tP i * $ and $ \tP m * $.
The tame symbol is trivial at the
$ \tP k * $ and $ \tP m * $ since $h_1(\tP m * )=h_2(\tP k * )=1$.
As the $ T $-element is alternating, the same holds at the~$ \tP i * $.
\end{proof}

\begin{remark} \label{torsionremark}
Although our construction of elements in $ K_2^T(C) $ above
is not based on this, it turns out that all $ (\tP i j ) - (\tP k l ) $ are torsion
divisors.
Namely, if $ d = \deg(f) = \sum_{i=1}^N N_i $,
then
$  \divisor \left( \frac{L_{i_1,j_1}}{L_{i_2,j_2}} \right) $
equals
\begin{equation*}
 \left[d (\tP i_1 j_1 ) + \sum_{l=1}^{N_{i_1}} ( (\tP i_1 l ) - (\tP i_1 j_1 ) ) \right]
-
 \left[d (\tP i_2 j_2 ) + \sum_{l=1}^{N_{i_2}} ( (\tP i_2 l ) - (\tP i_2 j_2 ) ) \right]
\,.
\end{equation*}
Taking $ i_1 = i_2 = i $ shows that $ (\tP i j_1 ) - ( \tP i j_2 ) $
is torsion, and the result is clear.
\end{remark}

\begin{lemma} \label{lemma:smooth}
If the base field has characteristic zero, then for fixed $L_{i,j}$
the affine curve defined by
$ f(x,y) = 0 $ with $ f(x,y) $ as in~\eqref{eqn:curve}
is non-singular except
for finitely many values of $ \lambda $.
\end{lemma}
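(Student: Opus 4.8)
The plan is to treat $f(x,y)$ as a one-parameter family via $\lambda$ and argue that the singular members form a proper Zariski-closed subset of the $\lambda$-line, hence are finite. First I would work with the projective closure $C'$ in $\P^2$ and note that the points $P_i$ at infinity do not depend on $\lambda$; it was already observed above that they are non-singular points of $C'$ (they come from the leading form $\prod_{i,j}L_{i,j}$, which is independent of $\lambda$). So the issue is confined to the affine curve, and there a point is singular precisely when $f = \partial f/\partial x = \partial f/\partial y = 0$. Writing $g(x,y) = \prod_{i=1}^N\prod_{j=1}^{N_i} L_{i,j}$, we have $f = \lambda g - 1$, so the singular locus is the set of common solutions of
\begin{equation*}
\lambda g - 1 = 0, \qquad \lambda\, \partial g/\partial x = 0, \qquad \lambda\, \partial g/\partial y = 0.
\end{equation*}
Since $\lambda \neq 0$, the last two equations say $\partial g/\partial x = \partial g/\partial y = 0$, which is a condition on $(x,y)$ alone, not involving $\lambda$; and the first equation then forces $\lambda = 1/g(x,y)$, which is determined by the point. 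So each singular point of a member of the family contributes exactly one bad value of $\lambda$, and it suffices to show there are only finitely many points $(x,y)$ in the affine plane (over an algebraic closure) with $\partial g/\partial x = \partial g/\partial y = 0$ and $g(x,y)\neq 0$.

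The key step is therefore to bound the common zero set of $\partial g/\partial x$ and $\partial g/\partial y$ away from the zero locus of $g$. Here I would use the product structure: by the product rule, $\partial g/\partial x = g \cdot \sum_{i,j} a_i/L_{i,j}$ and $\partial g/\partial y = g \cdot \sum_{i,j} b_i/L_{i,j}$ as rational functions, so on the open set where $g \neq 0$ the conditions become
\begin{equation*}
\sum_{i=1}^N\sum_{j=1}^{N_i} \frac{a_i}{L_{i,j}} = 0, \qquad \sum_{i=1}^N\sum_{j=1}^{N_i} \frac{b_i}{L_{i,j}} = 0.
\end{equation*}
Clearing denominators gives two genuine polynomial equations $G_1 = G_2 = 0$ of degree $d-1$, where $d = \sum_i N_i$. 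The union of $\{G_1 = 0\}$ and $\{G_2 = 0\}$ with the locus $g = 0$ (a finite set of lines) accounts for everything; I must rule out that $G_1$ and $G_2$ share a common component. Since the lines $L_{i,j}=0$ are distinct and pairwise non-parallel for distinct $i$ (so there are at least two distinct directions $(a_i:b_i)$), the vectors $(a_i,b_i)$ span the plane; from this one sees that $G_1$ and $G_2$ cannot have a common factor away from $g=0$ — for instance by checking that along any candidate common component one could produce a nontrivial linear combination $\mu G_1 + \nu G_2$ that reduces, modulo $g$, to a single term $\sum_{i,j}(\mu a_i + \nu b_i)/L_{i,j}$ with not all coefficients zero, whose vanishing on a curve not contained in $g=0$ is impossible for degree reasons. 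Hence $\{G_1=0\}\cap\{G_2=0\}$ is finite, and adding the finitely many intersection points coming from $g=0$ still leaves a finite set.

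The main obstacle I expect is precisely this last independence argument: showing that the two partial-derivative equations do not degenerate to a common curve. The cleanest route is probably to avoid resultants entirely and argue geometrically — the common zeros of $\partial g/\partial x$ and $\partial g/\partial y$ are exactly the singular points of the (possibly very singular) plane curve $g = \text{const}$ for the relevant constant, together with points on $g=0$, and a plane curve of fixed degree has only finitely many singular points unless it is non-reduced; non-reducedness of $g - c$ for all but finitely many $c$ is impossible since $g$ is a non-constant polynomial (its generic fibre is reduced, indeed irreducible here by Lemma~\ref{irrlemma} applied after rescaling). Packaging that observation: for all but finitely many $c$, the curve $g = c$ is reduced, hence has finitely many singular points; the finitely many exceptional $c$ contribute finitely many bad $\lambda$; and the remaining singular points across all good $c$ lie on the common zero set of $\partial g/\partial x, \partial g/\partial y$, which — being contained in finitely many fibres or else bounded by Bézout once reducedness is known — is finite. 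Either formulation works; I would present the geometric one, as it sidesteps the explicit manipulation of $G_1, G_2$. The characteristic-zero hypothesis enters in guaranteeing that the generic fibre of $g$ is reduced (so that $dg \neq 0$ generically), which can fail in characteristic $p$.
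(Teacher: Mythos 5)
Your reduction of the problem is the same as the paper's: writing $g=\prod_{i,j}L_{i,j}$, a singular affine point forces $\partial_x g=\partial_y g=0$ and $\lambda=1/g$ there, so everything comes down to controlling the common zero locus $Z$ of the two partials of $g$. (Your side remark that the $P_i$ were "already observed to be non-singular on $C'$" is not correct for the general curve~\eqref{eqn:curve} --- when $N_i\ge 2$ the point $P_i$ is a singular point of $C'$ --- but this is harmless since the lemma concerns only the affine curve.) The paper proves the strong statement that $\partial_x g$ and $\partial_y g$ are coprime, hence that $Z$ is finite, by a two-case argument: a common irreducible factor $h$ could meet the lines only at their pairwise intersection points; if $h$ avoids all of these it is a single constant on the connected union of the lines (forcing all $L_{i,j}$ to divide $h-c$, impossible by degree), and if $h$ vanishes at an intersection point one applies the Euler identity to the homogeneous product of the lines through that point to force $h$ to divide one of the lines, again impossible. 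Your first route aims at the same coprimality, but the justification does not hold up: a nontrivial combination $\mu G_1+\nu G_2$ is merely a nonzero polynomial of degree at most $d-1$, and there is no ``degree reason'' preventing such a polynomial from being divisible by a putative common factor $h$ of smaller degree. This is exactly the step where the paper does its real work, and your sketch leaves it unproven.

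Your second, geometric route is genuinely different and can be made to work, but as written it conflates three statements. What the lemma actually needs is that $g$ has only finitely many nonzero critical values, i.e.\ that $g(Z)\setminus\{0\}$ is finite, since the bad parameters are exactly $\lambda=1/c$ with $c$ a nonzero critical value. ``Contained in finitely many fibres'' is precisely this claim, but you assert it without proof and then incorrectly upgrade it to ``$Z$ is finite'' (a set contained in finitely many fibres need not be finite), while the alternative ``bounded by B\'ezout once reducedness is known'' is circular, since B\'ezout requires the coprimality you have not established. The missing justification is short in characteristic zero: on any one-dimensional component $W$ of $Z$ the differential $\dd g$ vanishes identically, so $g$ is constant on $W$; hence $g(Z)$ is finite and the lemma follows. (Pushing this one step further even recovers the paper's conclusion: if that constant is $c\neq 0$ then $W\subseteq\{g=c\}$, but $g-c$ is irreducible of degree $d$ by Lemma~\ref{irrlemma} while $\deg W\le d-1$, a contradiction; and $c=0$ would make $W$ one of the lines, along which the partials of $g$ do not both vanish generically. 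So $Z$ is in fact finite.) Reducedness of the fibres $g=c$, which is the point you do argue, is not by itself enough: it shows each fibre has finitely many singular points, not that only finitely many fibres are singular.
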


\begin{proof}
In order to see this, it suffices to show that $\partial_x f$ and $\partial_y f$ are coprime, so that
only finitely many points satisfy $\partial_x f=\partial_y f=0$.
Because those points are independent of $ \l $, this excludes
only finitely many values of~$ \l $.

In order to prove that $\partial_x f$ and $\partial_y f$ are coprime,
suppose an irreducible $ h $ divides $ \partial_x f$
and $\partial_y f$.
The curve defined by $ h=0 $ can meet the
lines defined by $ L_{i,j} = 0 $ only at their
points of intersection, since those are the singularities
of the curve defined by $ \prod_{i=1}^N \prod_{j=1}^{N_i} L_{i,j} = 0$.

Suppose $h$ does not vanish at any of these points.  Then $ h $ restricted
to the lines defined by $ L_{i,j} = 0 $ has no zeroes, hence is a constant. The
union of those lines is connected as $ N \ge 2 $, hence this
is the same constant $ c $ for all lines.
As the $ L_{i,j} $ are coprime, they all divide $ h - c $, which
is impossible because of degrees.

If $ h $ vanishes at one of these points, then we may, without loss of generality,
suppose we have $L_{1,1} = x$ and $L_{2,1} = y$, and that the
point is $ (0,0) $.
Let $u$ be the product of those $ L_{i,j} $ that vanish at $ (0,0)$,
and let $\tilde{u}$ be the product of the other $ L_{i,j} $.
Then $ h $ divides $\tilde{u}\partial_x u + u\partial_x \tilde{u}$ as
well as $\tilde{u}\partial_y u + u\partial_y \tilde{u}$.
By the definition of $u$, it is a homogeneous polynomial,
hence $x\partial_x u + y\partial_y u = \deg(u)u$.
Therefore $h$ divides $ u(\deg(u)\tilde{u}+ x\partial_x \tilde{u} +y \partial_x \tilde{u})$.
Because $\deg(u)\tilde{u}+ x\partial_x \tilde{u} +y \partial_x \tilde{u} \neq 0$ at $(0,0)$ by the definition of $\tilde{u}$,
we find $ h $ divides $u$. But then $ h $ vanishes on one of
the lines, which is impossible.
\end{proof}

The completion $C' $ in $ \P^2 $ of the affine curve defined by
$ f(x,y) = 0 $ with $ f(x,y) $ as in~\eqref{eqn:curve},
has points $ P_i $ ($ i=1,\dots,N $) at infinity, with $ P_i $ of multiplicity $ N_i $.
If $ N_i \ge 2 $ then $ P_i $ is a simple singular point.
Suppose the affine part of the curve is smooth over the base field,
which in characteristic zero is in general the case by Lemma~\ref{lemma:smooth} above.
By the degree-genus formula, the genus $g$ of $C$ then equals
\begin{equation} \label{eqn:genus}
   { \sum_{i=1}^N N_i -1 \choose 2} - \sum_{i=1}^N {N_i \choose 2 }
=
   \sum_{1\le i < j \le N} N_i N_j - \sum_{1\le i \le N} N_i + 1
\,.
\end{equation}
In Section~\ref{section:relations} we shall show that~\eqref{eqn:rectangle} and~\eqref{eqn:triangle}
give us at most this number of linearly independent elements.
So if we take $ \Q $ as our base field, and we can show that these
elements are integral by imposing some condition on the equation of
the lines, and linearly independent,
then we have as many elements as predicted by Beilinson's conjecture. This is
what we shall do in Sections~\ref{section:integrality} and~\ref{section:linearindependence},
but in greater generality.
To complement this, we also show in Section~\ref{section:relations}
that we cannot really get more independent elements out the points~$ \tP i * $,
and in Section~\ref{section:hyperelliptic} we determine for $ N=2 $
and~3 which curves are hyperelliptic and compare those with the
curves studied in~\cite{Jeu06}.

\section{Relations among the elements} \label{section:relations}

In this section we give relations among the elements of type
$ R $ and $ T $ that we constructed
in Section~\ref{section:construction}, and use those
in Proposition~\ref{prop:basis}
to reduce the number of generators for the subgroup $ V $ of
$ K_2^T(C) $ that they generate.

The relations in Lemma~\ref{relations} below
are based on divisions and combinations of polygons
formed by lines
(see Figures~\ref{fig:figure1} and~\ref{fig:figure2}).
For example, if we consider two parallel lines $L_{1,1}$, $L_{1,2}$ and
three parallel lines $L_{2,1}$, $L_{2,2}$, $L_{2,3}$, then the two oriented
parallelograms
$[L_{1,1}, L_{2,1}, L_{1,2}, L_{2,2}]$ and $[L_{1,1}, L_{2,2}, L_{1,2}, L_{2,3}]$,
specified by their sides,
``formally add up'' to $[L_{1,1}, L_{2,1}, L_{1,2}, L_{2,3}]$.
This corresponds to a relation
$ \recsymbol{i}{j}{k}{l}{m}{n}=\recsymbol{i}{p}{k}{l}{m}{n}+\recsymbol{i}{j}{p}{l}{m}{n}$,
which leads to relation~(iii). Similarly, for
two horizontal lines $L_{1,1}$, $L_{1,2}$, two vertical lines $L_{2,1}$, $L_{2,2}$, and one diagonal
line $L_{3,1}$ in general position, one obtains a relation between
four oriented triangles and one oriented rectangle by considering
``formal overlaps and cancellations'', namely
\begin{alignat*}{1}
& 
  [L_{1,1}, L_{2,1}, L_{3,1}] - [L_{1,1}, L_{2,2}, L_{3,1}]
+ [L_{1,2}, L_{2,2}, L_{3,1}] - [L_{1,2}, L_{2,1}, L_{3,1}]
\\
= \, &
 [L_{1,1}, L_{2,1}, L_{1,2},L_{2,2}]
.
\end{alignat*}
This corresponds to relations as in~(iv) and~(v).

\begin{figure}[ht]
\hspace{-8mm}
\begin{minipage}[b]{0.45\linewidth}
\centering

\begin{picture}(100,100)
\put(-20, 0){$L_{1,1}$}
\put(0, 0){\line(1,0){120}}
\put(-20, 50){$L_{1,2}$}
\put(0,50){\line(1,0){120}}
\put(5, 65){$L_{2,1}$}
\put(10, -10){\line(0, 1){70}}
\put(55, 65){$L_{2,2}$}
\put(60, -10){\line(0, 1){70}}
\put(105, 65){$L_{2,3}$}
\put(110, -10){\line(0, 1){70}}
\end{picture}

\caption{}
\label{fig:figure1}
\end{minipage}
\hspace{-0.8cm}
\begin{minipage}[b]{0.45\linewidth}
\centering

\begin{picture}(100,100)
\put(0, 0){$L_{1,1}$}
\put(20, 0){\line(1,0){140}}
\put(0, 30){$L_{1,2}$}
\put(20, 30){\line(1,0){100}}
\put(25, 105){$L_{2,1}$}
\put(30, -10){\line(0,1){110}}
\put(65, 75){$L_{2,2}$}
\put(70, -10){\line(0,1){80}}
\put(0, 90){$L_{3,1}$}
\put(20,90){\line(4,-3){130}}
\end{picture}

\caption{}
\label{fig:figure2}
\end{minipage}
\end{figure}

\begin{lemma} \label{relations}
We have the following relations among the
$\recsymbol{i}{j}{k}{l}{m}{n}$ and $\trisymbol{i}{j}{k}{l}{m}{n}$.
\begin{enumerate}
\item[(i)]
$ \trisymbol{i}{j}{k}{l}{m}{n}=-\trisymbol{k}{l}{i}{j}{m}{n}=-\trisymbol{i}{j}{m}{n}{k}{l} $;

\item[(ii)]
$ \recsymbol{i}{j}{k}{l}{m}{n}=-\recsymbol{i}{k}{j}{l}{m}{n}=-\recsymbol{l}{m}{n}{i}{j}{k} $;

\item[(iii)]
$ \recsymbol{i}{j}{k}{l}{m}{n}=\recsymbol{i}{p}{j}{l}{q}{m}-\recsymbol{i}{p}{k}{l}{q}{m}-\recsymbol{i}{p}{j}{l}{q}{n}+\recsymbol{i}{p}{k}{l}{q}{n} $;

\item[(iv)]
$ \recsymbol{i}{j}{k}{l}{m}{n}=\trisymbol{p}{q}{i}{j}{l}{m}-\trisymbol{p}{q}{i}{k}{l}{m}-\trisymbol{p}{q}{i}{j}{l}{n}+\trisymbol{p}{q}{i}{k}{l}{n} $;

\item[(v)]
$ \trisymbol{i}{j}{k}{l}{m}{n}=\trisymbol{i}{p}{k}{l}{m}{n}-\trisymbol{i}{p}{k}{q}{m}{n}+\trisymbol{i}{j}{k}{q}{m}{n}+\recsymbol{i}{p}{j}{k}{q}{l} $;

\item[(vi)]
$ \trisymbol{i}{j}{k}{l}{m}{n}=\trisymbol{i}{p}{k}{l}{m}{n}-\trisymbol{i}{p}{q}{r}{m}{n}+\trisymbol{i}{j}{q}{r}{m}{n}+\trisymbol{i}{p}{q}{r}{k}{l}-\trisymbol{i}{j}{q}{r}{k}{l} $;

\item[(vii)]
$ \trisymbol{i}{j}{k}{l}{m}{n}=\trisymbol{p}{q}{i}{j}{k}{l}+\trisymbol{p}{q}{k}{l}{m}{n}-\trisymbol{p}{q}{i}{j}{m}{n} $.
\end{enumerate}
\end{lemma}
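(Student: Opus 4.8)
The plan is to derive all seven relations from the defining relations of $K_2$ — bilinearity, $\{a,b\}=-\{b,a\}$ and $\{a,-a\}=0$ — together with the Plücker relation $\detsymbol{a}{b}\detsymbol{c}{d}-\detsymbol{a}{c}\detsymbol{b}{d}+\detsymbol{a}{d}\detsymbol{b}{c}=0$ among the minors. The organising device is a \emph{balanced form} of the $T$-symbol. For $L_{i,j},L_{k,l},L_{m,n}$ with pairwise distinct directions put $\phi_1=\detsymbol{k}{m}L_{i,j}$, $\phi_2=\detsymbol{m}{i}L_{k,l}$, $\phi_3=\detsymbol{i}{k}L_{m,n}$; the $x$- and $y$-terms of $\phi_1+\phi_2+\phi_3$ cancel, so this sum is a constant, and one checks at once that
\[
\trisymbol{i}{j}{k}{l}{m}{n}=\{-\phi_2/\phi_1,-\phi_3/\phi_1\}=\{-\phi_3/\phi_2,-\phi_1/\phi_2\}=\{-\phi_1/\phi_3,-\phi_2/\phi_3\},
\]
the three expressions agreeing thanks to $\{a,-a\}=0$. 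Writing $\langle L_{i,j},L_{k,l},L_{m,n}\rangle$ for this element, the displayed identity shows it changes by sign under a transposition of its three arguments, which is relation~(i). Relation~(ii) is the analogous and easier statement for $R$, immediate from the antisymmetry and bilinearity of $\{\cdot,\cdot\}$; and relation~(iii) is bilinearity in each slot applied to $\frac{L_{i,j}}{L_{i,k}}=\frac{L_{i,j}}{L_{i,p}}\big/\frac{L_{i,k}}{L_{i,p}}$ and $\frac{L_{l,m}}{L_{l,n}}=\frac{L_{l,m}}{L_{l,q}}\big/\frac{L_{l,n}}{L_{l,q}}$, followed by~(ii).

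For relation~(iv) I would expand each of the four $T$-symbols on the right bilinearly. Writing $\trisymbol{p}{q}{i}{j}{l}{m}=\{e_1L_{i,j}/L_{p,q},\,e_2L_{l,m}/L_{i,j}\}$, the constants $e_1,e_2$ are ratios of minors of the directions $p,i,l$ alone, hence independent of the indices $j,k,m,n$ that are varied in the alternating sum. Consequently every contribution involving $\{e_1,e_2\}$, $\{e_1,\cdot\}$, $\{\cdot,e_2\}$, $\{L_{p,q},\cdot\}$ or $\{L_{i,j},L_{i,j}\}$ telescopes away, leaving only $\{L_{i,j}/L_{i,k},L_{l,m}/L_{l,n}\}=\recsymbol{i}{j}{k}{l}{m}{n}$. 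Relation~(v) then follows from~(iv): after rearranging, its content is a mixed second difference of $T$-symbols which, using the cyclic symmetry from~(i), is precisely the right-hand side of~(iv) with $L_{p,q}$ taken to be $L_{m,n}$.

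The heart of the matter is relation~(vii), which in the balanced notation reads as the star subdivision $\langle A,B,C\rangle=\langle D,A,B\rangle+\langle D,B,C\rangle+\langle D,C,A\rangle$ of the triangle $ABC$ by an interior line $D$, with $A=L_{i,j},B=L_{k,l},C=L_{m,n},D=L_{p,q}$ and $\langle D,C,A\rangle=-\langle D,A,C\rangle$ rewriting the last term of~(vii). I would expand all four balanced symbols bilinearly and separate each into a purely multiplicative part built from ratios of the $L_{\cdot,\cdot}$, a constant part in $K_2$ of the base field, and mixed parts. The multiplicative parts reduce, using $\{a,-a\}=0$, to the telescoping identity $(b-a)\wedge(c-b)=(a-d)\wedge(b-a)+(b-d)\wedge(c-b)+(c-d)\wedge(a-c)$ in $\bigwedge^2$ of the group generated by $A,B,C,D$, which holds trivially. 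The real work is the constant and mixed parts: by choosing the balanced representatives uniformly so that the constants involved are literally ratios of the minors $\detsymbol{\cdot}{\cdot}$, the required cancellation is forced by the Plücker relation above — this is exactly the purpose served by the normalising minors in~\eqref{eqn:triangle}. Finally relation~(vi) drops out of~(vii): apply the star subdivision to the triangle on $L_{k,l},L_{m,n},L_{q,r}$ with interior line $L_{i,j}$, then again with interior line $L_{i,p}$, and subtract.

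The step I expect to be the main obstacle is the constant and torsion bookkeeping in~(vii). Matching the multiplicative parts is routine telescoping, but checking that the minors together with the residual $\{\cdot,-1\}$ terms cancel \emph{exactly}, and not merely modulo $2$-torsion, is delicate; this is precisely where the specific normalising factors of~\eqref{eqn:triangle} and the Plücker relation are indispensable, and organising that computation so it does not get out of hand is the real content of the proof.
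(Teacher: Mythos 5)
Your overall architecture coincides with the paper's: (i) and (ii) from $\{a,-a\}=0$ and antisymmetry, (iii) from bilinearity, (v) from (iv) together with (i), (vi) from (vii) together with (i), so that everything rests on proving (iv) and (vii). Your proof of (iv) — the normalising constants depend only on the three directions $p,i,l$ and therefore telescope out of the alternating second difference — is complete, and is exactly the "quite straightforward" argument the paper omits. Your derivations of (v) and of (vi) (star subdivision with interior line $L_{i,j}$, again with $L_{i,p}$, subtract) are correct, and the balanced form $\phi_1=\detsymbol{k}{m}L_{i,j}$, $\phi_2=\detsymbol{m}{i}L_{k,l}$, $\phi_3=\detsymbol{i}{k}L_{m,n}$ is a clean repackaging of the paper's observation that the $T$-element is alternating.

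The gap is (vii), which you yourself call "the real content of the proof" and then do not carry out. Splitting each symbol into a multiplicative part, a constant part and mixed parts is precisely the route that generates the $\{\cdot,-1\}$ debris you worry about: the multiplicative parts cancel only in $\bigwedge^2 F^*$, i.e.\ modulo symbols $\{x,-1\}$, and you give no mechanism for verifying that these together with the mixed and constant terms cancel exactly rather than up to $2$-torsion. As written, your argument for (vii) is a plan, not a proof. The paper sidesteps the bookkeeping by never separating constants from functions: it computes $\trisymbol{i}{j}{k}{l}{m}{n}-\trisymbol{i}{j}{p}{q}{m}{n}$ and $\trisymbol{p}{q}{i}{j}{k}{l}+\trisymbol{p}{q}{k}{l}{m}{n}$ directly by bilinearity, peeling off whole factors (function times minor) at each step, and finds that their difference collapses to the single symbol
\begin{equation*}
\left\{ \frac{\detsymbol{k}{m}\detsymbol{p}{i}}{\detsymbol{p}{m}\detsymbol{k}{i}},\; -\frac{\detsymbol{i}{m}\detsymbol{p}{k}}{\detsymbol{p}{m}\detsymbol{k}{i}}\right\},
\end{equation*}
which dies by one application of the Steinberg relation, since the Pl\"ucker identity $\detsymbol{k}{m}\detsymbol{p}{i}-\detsymbol{i}{m}\detsymbol{p}{k}=\detsymbol{p}{m}\detsymbol{k}{i}$ exhibits it as $\{a,1-a\}$. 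So you have correctly identified the two ingredients (Pl\"ucker plus Steinberg), but to close the argument you must either perform the exact cancellation your decomposition requires, or regroup the four terms as the paper does so that the Steinberg relation is invoked once, at the very end, on a single explicitly identified symbol.
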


\begin{proof}
The first two parts are easy consequences of $ \{a, -a\} = 0 $,
$\{a^{-1},b\} = \{a,b^{-1}\} =  -\{a,b\}$
and $\{a,b\}=-\{b,a\}$.
The third follows by working out $ \{a_1 a_2 , b_1 b_2 \} $
into the $ \{a_i,b_j\} $.
Also, (v) is a consequence of~(iv) and~(i)
by taking $i,j,k,l,m,n,p,q$ in (iv) to be $i,p,j,k,q,l,m,n$ respectively,
and~(vi) is a consequence of~(vii) and~(i).
Thus it suffices to prove~(iv) and~(vii).
Here we prove~(vii), as (iv) is quite straightforward.

Note that
$ \trisymbol{i}{j}{k}{l}{m}{n}+\trisymbol{p}{q}{i}{j}{m}{n} = \trisymbol{i}{j}{k}{l}{m}{n} - \trisymbol{i}{j}{p}{q}{m}{n} $
equals
\begin{eqnarray*}
&& \left\{ \frac{\detsymbol{i}{m}}{\detsymbol{k}{m}} \frac{L_{k,l}}{L_{i,j}}, \frac{\detsymbol{i}{k}}{\detsymbol{m}{k}} \frac{L_{m,n}}{L_{i,j}}\right\} - \left\{ \frac{\detsymbol{i}{m}}{\detsymbol{p}{m}} \frac{L_{p,q}}{L_{i,j}}, \frac{\detsymbol{i}{p}}{\detsymbol{m}{p}}\frac{L_{m,n}}{L_{i,j}}\right\}\\
&=&
  \left\{ \frac{\detsymbol{i}{m}}{\detsymbol{k}{m}} \frac{L_{k,l}}{L_{i,j}}, \frac{\detsymbol{i}{k}}{\detsymbol{m}{k}} \frac{L_{m,n}}{L_{i,j}}\right\}
- \left\{ \frac{\detsymbol{i}{m}}{\detsymbol{p}{m}} \frac{L_{p,q}}{L_{i,j}}, \frac{\detsymbol{i}{k}}{\detsymbol{m}{k}} \frac{L_{m,n}}{L_{i,j}}\right\}
\\
&& + \left\{ \frac{\detsymbol{i}{m}}{\detsymbol{p}{m}} \frac{L_{p,q}}{L_{i,j}}, \frac{\detsymbol{i}{k}\detsymbol{m}{p}}{\detsymbol{m}{k}\detsymbol{i}{p}}\right\}\\
&=& \left\{ \frac{\detsymbol{p}{m}}{\detsymbol{k}{m}} \frac{L_{k,l}}{L_{p,q}}, \frac{\detsymbol{i}{k}}{\detsymbol{m}{k}} \frac{L_{m,n}}{L_{i,j}}\right\}
 + \left\{ \frac{\detsymbol{i}{m}}{\detsymbol{p}{m}} \frac{L_{p,q}}{L_{i,j}}, \frac{\detsymbol{i}{k}\detsymbol{m}{p}}{\detsymbol{m}{k}\detsymbol{i}{p}}\right\}.
\end{eqnarray*}
Then
$ \trisymbol{p}{q}{i}{j}{k}{l}+\trisymbol{p}{q}{k}{l}{m}{n} = \trisymbol{k}{l}{m}{n}{p}{q}+\trisymbol{i}{j}{k}{l}{p}{q} $
is equal to
\begin{equation*}
 \left\{ \frac{\detsymbol{p}{i}}{\detsymbol{k}{i}} \frac{L_{k,l}}{L_{p,q}}, \frac{\detsymbol{i}{k}}{\detsymbol{m}{k}} \frac{L_{m,n}}{L_{i,j}}\right\}
 + \left\{ \frac{\detsymbol{m}{k}}{\detsymbol{p}{k}} \frac{L_{p,q}}{L_{m,n}}, \frac{\detsymbol{i}{k}\detsymbol{m}{p}}{\detsymbol{m}{k}\detsymbol{i}{p}}\right\}.
\end{equation*}
Therefore
$ \trisymbol{i}{j}{k}{l}{m}{n}+\trisymbol{p}{q}{i}{j}{m}{n}-(\trisymbol{p}{q}{i}{j}{k}{l}+\trisymbol{p}{q}{k}{l}{m}{n}) $
equals
\begin{eqnarray*}
&&
\left\{ \frac{\detsymbol{p}{m}\detsymbol{k}{i}}{\detsymbol{k}{m}\detsymbol{p}{i}}, \frac{\detsymbol{i}{k}}{\detsymbol{m}{k}}\frac{L_{m,n}}{L_{i,j}}\right\} +
\left\{ \frac{\detsymbol{i}{m}\detsymbol{p}{k}}{\detsymbol{p}{m}\detsymbol{m}{k}} \frac{L_{m,n}}{L_{i,j}}, \frac{\detsymbol{i}{k}\detsymbol{m}{p}}{\detsymbol{m}{k}\detsymbol{i}{p}}\right\}
\\
&=&
\left\{ \frac{\detsymbol{p}{m}\detsymbol{k}{i}}{\detsymbol{k}{m}\detsymbol{p}{i}}, \frac{\detsymbol{i}{k}\detsymbol{p}{m}}{\detsymbol{i}{m}\detsymbol{p}{k}}\right\}
\\
&=&
\left\{ \frac{\detsymbol{k}{m}\detsymbol{p}{i}}{\detsymbol{p}{m}\detsymbol{k}{i}}, -\frac{\detsymbol{i}{m}\detsymbol{p}{k}}{\detsymbol{p}{m}\detsymbol{k}{i}}\right\}
\,,
\end{eqnarray*}
which is trivial because $\detsymbol{k}{m}\detsymbol{p}{i}-\detsymbol{i}{m}\detsymbol{p}{k}=\detsymbol{p}{m}\detsymbol{k}{i}$.
 This proves~(vii).
\end{proof}

\begin{proposition} \label{prop:basis}
Let $V$ be the subgroup of $K_2^T(C)$ generated by all the elements $\recsymbol{i}{j}{k}{l}{m}{n}$ and $\trisymbol{i}{j}{k}{l}{m}{n}$. Then $V$ is generated by the following elements:
\begin{eqnarray*}
&& \recsymbol{1}{1}{j}{2}{1}{m}, \qquad 1<j\le N_1, 1<m\le N_2; \\
&& \trisymbol{1}{1}{k}{l}{m}{n}, \qquad  2\le k < m \le N, 1\le l \le N_k, 1\le n \le N_m ; \\
&& \trisymbol{1}{j}{2}{1}{m}{n},
 \qquad 2\le j \le N_1, 3 \le m \le N,   1\le n \le N_m.
\end{eqnarray*}
\end{proposition}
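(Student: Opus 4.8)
The plan is to use the relations in Lemma~\ref{relations} as rewriting rules, in a fixed order, to reduce an arbitrary generator to a combination of the three listed families. I would organise this in three stages, each stage eliminating one kind of index from the ``first slot''.

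First I would handle the $ R $-elements. Starting from an arbitrary $ \recsymbol{i}{j}{k}{l}{m}{n} $ with $ i \neq l $, I would use relation~(iii) repeatedly to telescope the second and fourth index pairs down to $ j=1 $, $ m=1 $: writing $ \frac{L_{i,j}}{L_{i,k}} = \frac{L_{i,1}}{L_{i,k}} \big/ \frac{L_{i,1}}{L_{i,j}} $ and similarly in the other factor, relation~(iii) (with $ p=1 $, $ q=1 $) expresses $ \recsymbol{i}{j}{k}{l}{m}{n} $ as a $ \Z $-combination of elements $ \recsymbol{i}{1}{*}{l}{1}{*} $. Then I would use symmetry~(ii) (which allows swapping the two index pairs at the cost of a sign, and reversing $ (j,k) $ or $ (m,n) $ at the cost of a sign) to arrange $ i<l $; combining this with~(iii) once more if needed, I can force $ i=1 $, and since $ i\ne l $ and we want the ``standard'' second block, $ l=2 $. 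This yields the first family $ \recsymbol{1}{1}{j}{2}{1}{m} $ with $ 1<j\le N_1 $, $ 1<m\le N_2 $ (the cases with a repeated index being zero).

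Next I would reduce the $ T $-elements to those with first pair $ (1,\,\cdot\,) $. Given $ \trisymbol{i}{j}{k}{l}{m}{n} $ with $ i,k,m $ distinct, if none of $ i,k,m $ equals $ 1 $, apply~(vii) with $ (p,q)=(1,1) $ to rewrite it as $ \trisymbol{1}{1}{i}{j}{k}{l} + \trisymbol{1}{1}{k}{l}{m}{n} - \trisymbol{1}{1}{i}{j}{m}{n} $, each summand now having a $ 1 $ among its three line-indices. Using the alternating property~(i) I may then move that $ 1 $ into the first pair. So every $ T $-element is a $ \Z $-combination of $ \trisymbol{1}{j_1}{k}{l}{m}{n} $ with $ k,m \neq 1 $ and $ k\neq m $; by~(i) again I may order $ k<m $. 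It remains to reduce the first index $ j_1 $ of the $ 1 $-block to $ 1 $: using relation~(v) (or~(vi)) I can peel off, at the cost of introducing an $ R $-element and $ T $-elements whose $ 1 $-block already has first index $ 1 $, rewriting $ \trisymbol{1}{j_1}{k}{l}{m}{n} $ in terms of $ \trisymbol{1}{1}{k}{l}{m}{n} $, things of the form $ \trisymbol{1}{j_1}{2}{1}{m}{n} $, and an $ R $-element — here the index $ 2 $ arises as the auxiliary second line in~(v)/(vi). This produces the second family $ \trisymbol{1}{1}{k}{l}{m}{n} $ and the ``leftover'' third family $ \trisymbol{1}{j}{2}{1}{m}{n} $ with $ j\ge 2 $, $ m\ge 3 $. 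Within the third family the second block must be $ (2,1) $ by construction, and $ m\neq 1,2 $ because $ 1,2,m $ are distinct, so $ m\ge 3 $.

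The main obstacle, and where I would spend the most care, is bookkeeping in the second stage: when relation~(v)/(vi) is applied to lower $ j_1 $, the $ T $-elements it introduces must themselves already lie in the target set, so the order of the reductions matters — one must lower the first index of the $ 1 $-block only after the other indices have been put in standard position, and check that no step re-creates a non-standard first pair. I would set up an explicit partial order (e.g. lexicographic in the tuple of ``distances from standard form'') and verify each relation strictly decreases it, so the process terminates with exactly the three listed families. The degenerate cases (a repeated line-index making an element zero by~(i) or~(ii), or an index block of the form $ (p,p) $) should be recorded and discarded at each stage. Everything else is the kind of routine symbol manipulation already illustrated in the proof of Lemma~\ref{relations}.
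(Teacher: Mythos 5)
Your reduction of the $T$-elements is essentially the paper's argument: relation (vii) with $p=q=1$ brings the index $1$ into play when $i,k,m\neq 1$, relation (i) moves it into the first block, and relations (v) (for $k=2$, with $p=q=1$) and (vi) (for $k\geq 3$, with $p=r=1$, $q=2$) then lower the first block to $(1,1)$ at the cost of third-family terms $\trisymbol{1}{j}{2}{1}{m}{n}$ and a first-family $R$-element. The elaborate termination order you propose is unnecessary --- each case is settled by a single application of one relation, with all resulting terms already in the target list --- but that is harmless.

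The $R$-element stage, however, has a genuine gap. Relations (ii) and (iii) both preserve the pair of line-family indices $\{i,l\}$: (ii) only swaps the two blocks or the two lines within a block, and every term on the right-hand side of (iii) carries the same $i$ and $l$ as the left-hand side. So the step ``combining this with (iii) once more if needed, I can force $i=1$ \dots\ $l=2$'' is false: for example, with $N=3$ and $N_1=N_2=N_3=2$, the elements $\recsymbol{2}{1}{2}{3}{1}{2}$ and $\recsymbol{1}{1}{2}{3}{1}{2}$ can never be brought to the form $\recsymbol{1}{1}{j}{2}{1}{m}$ by (ii) and (iii) alone. The missing ingredient is relation (iv), which converts an $R$-element into a combination of four $T$-elements: with $p=q=1$ it handles $i\neq 1$ (producing second-family elements of the form $\trisymbol{1}{1}{i}{j}{l}{m}$, after ordering by (i)), and with $p=2$, $q=1$ together with (i) it handles $i=1$, $l\geq 3$ (producing third-family elements of the form $\trisymbol{1}{j}{2}{1}{l}{m}$). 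Only when $\{i,l\}=\{1,2\}$ does (iii) with $p=q=1$ suffice, which is the one case where your telescoping argument agrees with the paper's proof.
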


\begin{proof}
We use the identities in Lemma~\ref{relations}.
First we prove that any $\recsymbol{i}{j}{k}{l}{m}{n}$ is a linear combination of these elements.
We can suppose $i<l$ by~(ii).
If $i=1$ and $l=2$, it is a consequence of~(iii) by letting $p=q=1$.
If $i=1$ and $l\neq 2$, it is a consequence of~(iv) by letting $p=2$ and $ q=1$, and using~(i).
If $i\neq 1$, then we reduce to the case $ i = 1 $ by using~(iv) with $p=q=1$.

Now we consider $\trisymbol{i}{j}{k}{l}{m}{n}$. By~(i), we can suppose $i<k<m$.
If $i=1$ and $k=2$, it is a consequence of~(v) by letting $p=q=1$.
If $i=1$ and $k\ne 2$, it is a consequence of~(vi) by letting $p=r=1$ and $q=2$.
If $i\neq 1$, it is a consequence of~(vii) by letting $p=q=1$.
\end{proof}

\begin{remark} \label{remark:genus}
Note that in Proposition~\ref{prop:basis}, $ V \subseteq K_2^T(C) $ is generated
by
\begin{equation*}
 (N_1 - 1)(N_2-1) + \sum_{2 \leq k < m \leq N} N_k N_m + (N_1-1) \sum_{m=3}^N N_m
\end{equation*}
elements. This number is exactly the same as the number in~\eqref{eqn:genus},
which is the genus of $ C $ if the affine curve defined by $ f(x,y) = 0 $
with $ f(x,y) $ as in~\eqref{eqn:curve} is smooth over the base field.
\end{remark}

We now also show that, using all symbols in $ K_2(F) $ with as entries functions
that have divisors supported in the $ \tP i j $, does not really lead
to a larger subgroup of $ K_2^T(C) $ than $ V $.

\begin{proposition}
Let $ A \subseteq K_2(F) $ be the subgroup generated by symbols
$ \{f_1,f_2\} $ where $ \divisor(f_l) $ is supported in $ \{\tP i j \}_{i,j} $. Then
there is a positive integer $ a $, depending only on $ N $ and the $ N_i $,
such that $ a(A \cap K_2^T(C)) $
is contained in the sum of $ V $ and $ K_2 $ of the base field.
\end{proposition}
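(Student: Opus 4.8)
The plan is to reduce, in a bounded number of steps, any $ \a\in A\cap K_2^T(C) $ modulo $ V $ and $ K_2(k) $ to zero, the integer $ a $ that accumulates depending only on $ N $ and the $ N_i $.

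First, make the divisors explicit. On $ C $ the equation $ f=0 $ gives $ \prod_{i,j}L_{i,j}=\l^{-1} $ in $ F^* $, hence $ \sum_{i,j}\divisor(L_{i,j})=0 $; together with Remark~\ref{torsionremark} this yields, with $ d=\sum_i N_i $,
\[
\divisor(L_{i,j})=(d-N_i)(\tP i j )-\sum_{k\ne i}\sum_{l=1}^{N_k}(\tP k l ).
\]
Thus every $ L_{i,j} $, and every ratio $ L_{i,j}/L_{k,l} $, lies in $ B:=\{h\in F^*:\divisor(h)\text{ supported on }S\} $ with $ S=\{\tP i j \}_{i,j} $. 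Since the $ \tP i j $ are rational over $ k $ (they correspond to the tangent directions $ L_{i,j} $, defined over $ k $) and $ k $ is algebraically closed in $ F $, the group $ B/k^* $ embeds in the rank-$(d-1)$ lattice $ D_0(S) $ of degree-zero divisors supported on $ S $, and from the formula above the divisors of the $ L_{i,j}/L_{k,l} $ span a full-rank sublattice $ W\subseteq D_0(S) $ whose index $ b=[D_0(S):W] $ is an explicit positive integer depending only on $ N $ and the $ N_i $. Hence $ b\,D_0(S)\subseteq W $, so for every $ h\in B $ the power $ h^{\,b} $ is a constant times a product of ratios $ L_{i,j}/L_{k,l} $.

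Writing $ \a=\sum_t\{g_t,h_t\} $ with $ g_t,h_t\in B $ and expanding $ b^2\a=\sum_t\{g_t^{\,b},h_t^{\,b}\} $ bilinearly shows $ b^2\a\in G\cap K_2^T(C) $, where $ G $ is generated by all $ \{c,c'\}$, $ \{c,L_{i,j}\}$ and $ \{L_{i,j},L_{k,l}\} $ with $ c,c'\in k^* $. Inside $ G $: since the lines in family $ i $ are parallel, $ L_{i,j}-L_{i,j'} $ is a constant and Steinberg's relation for $ L_{i,j}/L_{i,j'} $ and $ 1-L_{i,j}/L_{i,j'}=(c_{i,j'}-c_{i,j})/L_{i,j'} $ shows $ \{L_{i,j},L_{i,j'}\}\in J $, where $ J $ denotes the subgroup generated by $ K_2(k) $ and the symbols $ \{c,L_{i,j}\} $ with $ c\in k^* $. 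So modulo $ J $ the group $ G $ is generated by the cross-family symbols $ \{L_{i,j},L_{k,l}\} $, $ i\ne k $; on these the $ R $-relations of Lemma~\ref{relations} give bilinearity in the pair $ (j,l) $ modulo $ V $, the $ T $-relations give, modulo $ V $ and after discarding a constant factor, the cocycle identity $ \{L_{i,j},L_{k,l}\}+\{L_{k,l},L_{m,n}\}+\{L_{m,n},L_{i,j}\}\equiv0 $, and $ \prod_{i,j}L_{i,j}=\l^{-1} $ gives $ \sum_{(k,l)}\{L_{i,j},L_{k,l}\}\in J $. An elementary combination of these three (valid also for $ N=2 $, using only the first and the last) produces an integer $ c $, depending only on $ N $ and the $ N_i $, that annihilates every cross-family symbol modulo $ V+J $. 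Therefore $ cb^2\a\equiv\Sigma\pmod{V+K_2(k)} $ with $ \Sigma=\sum_{i,j}\{C_{i,j},L_{i,j}\} $, $ C_{i,j}\in k^* $, and $ \Sigma\in K_2^T(C) $ since $ cb^2\a $, $ V $ and $ K_2(k) $ all lie there.

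It remains to treat such a $ \Sigma $. Its tame symbol at $ \tP i j $ is $ \prod_{k,l}C_{k,l}^{\,m_{(i,j),(k,l)}} $, where $ m=\bigl(\ord_{\tP i j }L_{k,l}\bigr) $ is, by the divisor formula, the integer matrix with diagonal entries $ d-N_i $, zero entries within a family, and $ -1 $ across families. Its rows and columns sum to $ 0 $, and $ m $ has rank $ d-1 $ with kernel spanned by the primitive all-ones vector; so vanishing of all tame symbols of $ \Sigma $ forces the $ C_{i,j} $ to agree in $ k^*\otimes\Q $, hence up to roots of unity, and raising to the $ b $-th power removes the residual torsion (its exponent divides $ b $), giving $ b\Sigma=\{u,\prod_{i,j}L_{i,j}\}=\{u,\l^{-1}\}\in K_2(k) $ for a suitable $ u\in k^* $. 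Hence $ cb^3\a\in V+K_2(k) $, and $ a=cb^3 $ works. The main obstacle is the cross-family reduction in the third paragraph: one must track exactly which elements of $ J $ and $ K_2(k) $ are produced when applying the $ R $- and $ T $-relations, and verify the claimed annihilation — this is precisely the bookkeeping encoded in Proposition~\ref{prop:basis} and Remark~\ref{remark:genus}, where the number of generators of $ V $, namely the genus, equals the rank of the cross-family symbols modulo the relations of Lemma~\ref{relations} together with $ \prod_{i,j}L_{i,j}=\l^{-1} $.
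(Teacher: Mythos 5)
Your overall architecture is the same as the paper's: reduce every function with divisor supported in the $\{\tP i j \}_{i,j}$ to a bounded power of a product of the lines times a constant (via the finite index of the lattice spanned by $\divisor(L_{i,j}/L_{k,l})$ in the degree-zero divisors), expand bilinearly, use the $R$- and $T$-relations together with $\l\prod_{i,j}L_{i,j}=1$ to dispose of the cross-family symbols up to a bounded multiple, and finally use the vanishing of tame symbols plus the rank of the order matrix to force the leftover constants to be roots of unity of bounded order. The opening lattice argument, the Steinberg trick for within-family symbols, and the closing analysis of $\Sigma=\sum_{i,j}\{C_{i,j},L_{i,j}\}$ are all correct and essentially match what the paper does.

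The genuine gap is the pivotal third step. You assert that ``an elementary combination'' of the $R$-bilinearity, the $T$-cocycle identity and the product relation ``produces an integer $c$ \dots that annihilates every cross-family symbol modulo $V+J$'', and then defer the verification to ``the bookkeeping encoded in Proposition~\ref{prop:basis} and Remark~\ref{remark:genus}''. Those results do not contain this verification: they count generators of $V$, and the numerical coincidence with the genus says nothing about whether the group generated by the $\{L_{i,j},L_{k,l}\}$ modulo $V+J$ and your three families of relations is \emph{finite}. That finiteness is exactly the hard content here, and it requires an argument (for instance, showing that modulo the relations every $\{L_{i,j},L_{k,l}\}$ is of the form $C_{(i,j)}-C_{(k,l)}$ with $d\cdot C_{(i,j)}$ independent of $(i,j)$, whence $d$ or $N_1N_2$ kills everything --- and the $N=2$ case, where the cocycle identity is vacuous, must be treated separately, as you note but do not do). The claim is in fact true, but as written the proof stops precisely at its main obstacle. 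For comparison, the paper avoids the abstract presentation entirely: it works with the specific symbols $\{L_{1,j}/L_{1,1},\,L_{k,l}/L_{1,1}\}$, observes that the \emph{difference} of any two of them (over varying $(k,l)$ with $k>1$) is a sum of $R$- or $T$-elements plus terms with a constant entry, and that their \emph{sum} over all $(k,l)$ is such junk by the product relation, so that $(d-N_1)$ times each one lies in $V$ plus junk --- a two-line averaging argument that makes the multiplier explicit. Supplying either that argument or the finiteness computation for your presentation would close the gap.
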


\begin{proof}
From Remark~\ref{torsionremark} it is clear that
if $ \divisor(f_l) $ is supported in  $ \{ \tP i j \}_{i,j} $, then there is a fixed
positive integer such that $ f_l $ raised to this power is a product of powers of
the $ L_{i,j}/L_{1,1} $ and a non-zero constant.
Multiplying any element in $ A $ by the square of this positive integer, and
expanding, we see that the result can be expressed in terms of
elements of type $ T $,
$\left\{\frac{L_{1,j}}{L_{1,1}}, \frac{L_{k,l}}{L_{1,1}} \right\}$ with $k > 1$,
$\{c, \frac{L_{m,n}}{L_{1,1}} \}$ where $c$ is a non-zero constant,
and an element of $ K_2 $ of the base field.

Let us fix $ j $. Then for $ k_1, k_2 > 1  $ we have
$\left\{\frac{L_{1,j}}{L_{1,1}}, \frac{L_{k_1,l_1}}{L_{1,1}}\right\}-\left\{\frac{L_{1,j}}{L_{1,1}},\frac{L_{k_2,l_2}}{L_{1,1}}\right\}= \left\{\frac{L_{1,j}}{L_{1,1}}, \frac{L_{k_1,l_1}}{L_{k_2,l_2}}\right\} $.
If $k_1 = k_2= k$ this equals $\recsymbol{1}{j}{1}{k}{l_1}{l_2}$,
and, because it also equals
$  \left\{\frac{L_{1,j}}{L_{k_2,l_2}}, \frac{L_{k_1,l_1}}{L_{k_2,l_2}}\right\} -
\left\{\frac{L_{1,1}}{L_{k_2,l_2}}, \frac{L_{k_1,l_1}}{L_{k_2,l_2}}\right\}$,
for $ k_1 \ne k_2 $ it is the sum of two $T$-elements, elements
of the form $\left\{c, \frac{L_{m,n}}{L_{1,1}} \right\}$, and
an element in $ K_2 $ of the base field.
From $\l \prod_{i=1}^N \prod_{j=1}^{N_i} L_{i,j}=1$
we obtain
$ \sum_{k=1}^N \sum_{l=1}^{N_k} \left\{\frac{L_{1,j}}{L_{1,1}}, \frac{L_{k,l}}{L_{1,1}}\right\} = \left\{ \l , \frac{L_{1,j}}{L_{1,j}} \right\} + d \left\{ L_{1,1} , - L_{1,j} \right\} $,
where $ d = \sum_{i=1}^{N} N_i $.
Combining these two facts we see that
$ (d-N_1)\left\{\frac{L_{1,j}}{L_{1,1}}, \frac{L_{k,l}}{L_{1,1}}\right\}$
for $ k > 1 $ is the sum of
$R$-elements, $T$-elements, elements of the form $\left\{c, \frac{L_{m,n}}{L_{1,1}} \right\}$,
and an element in $ K_2 $ of the base field.
Hence there is a fixed positive integer such that if we multiply
an element in $ A $ by this integer, then the result can be expressed
in those four types of elements.

Now suppose this expression lies in $ K_2^T(C) $.
Collecting terms of the form $\left\{c, \frac{L_{m,n}}{L_{1,1}} \right\}$ for fixed $ m $
and $ n $, we may assume there is only one such term
$\left\{c_{m,n}, \frac{L_{m,n}}{L_{1,1}} \right\}$
for each pair
$ (m,n) $. It is trivial when $ (m,n) = (1,1) $.
For the other pairs, the divisors of the functions $ L_{m,n}/L_{1,1} $
are linearly independent as our earlier calculations show that
there is only one relation among the divisors of the $ L_{i,j} $,
which must correspond to the identity $ \l \prod_{i=1}^N \prod_{j=1}^{N_i} L_{i,j}=1$.
Because elements of type $ R $ and $ T $ are in $ K_2^T(C) $,
it follows that each $ c_{m,n} $ is a root of unity of order dividing
some fixed positive integer.
Multiplying the expression by this integer we obtain an element
in the sum of $ V $ and $ K_2 $ of the base field.
\end{proof}

\section{Integrality when $N=2$ or 3} \label{section:integrality}

In this section, we work over a number field, and we investigate
the integrality of the elements in~\eqref{eqn:rectangle} and~\eqref{eqn:triangle}
under certain conditions.
It should be noted that in the proof of Theorem~\ref{thm:main} below it is
crucial that the $ N $ different $ P_i $ at infinity in $ C $,
when viewed as sections of a regular, proper model $ \CC $, never meet,
and that the determinants $ [i,k] $ are units in the ring of
integers.
For general number fields this forces us to take $ N=2 $ or~3,
which we shall assume later in this section.

We begin by showing that, for $ C $ a regular, proper, geometrically irreducible curve
over some number field,
the subgroup $ \INTC $ defined in Section~\ref{introduction},
is independent of the choice of the regular, proper model $\CC$.
(On \cite[p.~344]{Jeu06}, where the base field was $ \Q $,
it was stated without proof that $\INTC /\torsion$, which was
denoted $ K_2(C;\Z) $, is independent of this choice.)

\begin{proposition} \label{modelchoice}
The subgroup $ K_2^T(\CC) $ of $ K_2^T(C) $
does not depend on~$ \CC $.
\end{proposition}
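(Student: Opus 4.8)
The plan is to compare $K_2^T(\CC)$ for two regular proper models of $C/\OO_k$ directly, without passing to the quotient by torsion. The key geometric input is that any two such models are connected by a chain of blow-ups and blow-downs at closed points; since $K_2^T(\CC)$ is defined as a kernel inside $K_2(F)$, it suffices to treat a single blow-up $\pi\colon \CC' \to \CC$ at a closed point $P$ in a special fibre and show $K_2^T(\CC') = K_2^T(\CC)$. One inclusion is essentially formal: the irreducible curves on $\CC$ other than the one blown up correspond (via strict transform) to irreducible curves on $\CC'$ with the same residue field and the same valuation on $F$, so $T_{\CC'}(\a) = 0$ forces all those components of $T_\CC(\a)$ to vanish, and the component along $\DD$ (the centre's image, if it is a curve — but $P$ is a closed point, so this does not arise) is unaffected; hence $K_2^T(\CC') \subseteq K_2^T(\CC)$.

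The substance is the reverse inclusion: if $\a \in K_2(F)$ has trivial tame symbol along every irreducible curve of $\CC$, I must show its tame symbol along the exceptional divisor $E \subset \CC'$ also vanishes. First I would reduce to the case where $\CC = \Spec(R)$ with $R$ a two-dimensional regular local ring with maximal ideal corresponding to $P$, and $\CC'$ the blow-up at the closed point; $E$ is then a projective line over the residue field $\kappa(P)$. The valuation $v_E$ on $F$ is the order of vanishing along $E$, which on $R$ is computed by: pick a regular system of parameters $(s,t)$, then $v_E(g)$ is the largest $n$ with $g \in \mathfrak{m}^n$. The tame symbol $T_E(\{f_1,f_2\})$ lands in $\kappa(E)^* = \kappa(P)(u)^*$ where $u = s/t$ is the affine coordinate on $E$. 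The crucial point is a local reciprocity / residue computation on $E \cong \P^1_{\kappa(P)}$: I want to show that the divisor of $T_E(\a)$ on $E$ is trivial, equivalently that $T_E(\a)$ is constant, and then that this constant is $1$. The divisor of $T_E(\a)$ at a closed point $z \in E$ is governed — by the standard compatibility of tame symbols in a tower (Gersten/Quillen, or the explicit Steinberg-symbol residue formulas) — by the tame symbols of $\a$ at the closed points of $\CC'$ lying on $E$, which in turn biject (except for $z$ corresponding to the strict transform directions) with nothing coming from $\CC$; the one exceptional direction maps down to $P$. Running the two-step tame-symbol compatibility "$\CC \leadsto \CC'$-generic-point-of-$E$ $\leadsto$ closed points of $E$" and using that $\a$ already has trivial symbol along every curve through $P$ on $\CC$, I get that $\divisor(T_E(\a)) = 0$ on $E \cong \P^1_{\kappa(P)}$, so $T_E(\a) \in \kappa(P)^*$ is a constant; the product formula on $\P^1_{\kappa(P)}$ (or a direct specialisation argument evaluating at a point where $\a$ specialises) then pins the constant to $1$.

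I would organise the write-up as: (1) reduce to a single blow-up at a closed point via the factorisation theorem for birational morphisms of regular surfaces; (2) observe the easy inclusion and set up the local model; (3) record the explicit formula for $v_E$ and for $T_E$ on Steinberg symbols; (4) the two-step tame-symbol compatibility giving $\divisor(T_E(\a)) = 0$; (5) kill the remaining constant. I expect step (4) — making the compatibility of iterated tame symbols with the single-step symbol along $E$ precise enough to conclude that the hypothesis "trivial along all curves of $\CC$ through $P$" exactly accounts for all contributions to $\divisor_E(T_E(\a))$ — to be the main obstacle; it is where one must be careful that the closed points of $\CC'$ on $E$ that do \emph{not} lie on a strict transform contribute nothing, and that the single "bad" direction is controlled by the finitely many curves through $P$ downstairs. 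Everything else is bookkeeping with the Matsumoto relations and the product formula~\eqref{productformula} on a projective line.
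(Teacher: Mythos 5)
Your overall strategy (reduce to a single blow-up at a closed point via the factorisation theorem for birational morphisms of regular arithmetic surfaces, record the easy inclusion $K_2^T(\CC')\subseteq K_2^T(\CC)$, and then show $T_E(\a)=1$ for $\a\in K_2^T(\CC)$) is entirely different from the paper's, and its first stages are sound. In particular, the ``two-step compatibility'' you worry about is just the complex property of the Gersten--Quillen (coniveau) sequence: at each closed point $z$ of $E$ it gives $\ord_z(T_E(\a))=-\sum_{\DD'\ne E,\ \DD'\ni z}\ord_z(T_{\DD'}(\a))=0$, since every irreducible curve on $\CC'$ other than $E$ is the strict transform of a curve on $\CC$, along which $\a$ has trivial symbol by hypothesis. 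So $T_E(\a)$ is indeed a constant in $\kappa(P)^*$. Contrary to what you expect, this is not the main obstacle; it is standard.

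The genuine gap is your step (5), killing the constant. Neither of your two suggestions does this: the product formula~\eqref{productformula} applies to elements of $K_2$ of the function field of $E\cong\P^1_{\kappa(P)}$, whereas $T_E(\a)$ lives in $K_1(\kappa(E))=\kappa(E)^*$, and for an element of $\kappa(E)^*$ the only reciprocity available is that its divisor has degree zero --- which a constant satisfies vacuously; nor is there a distinguished point of $E$ at which ``$\a$ specialises'' so as to force the value~$1$. This last step is precisely where the arithmetic content of the proposition sits: locally it would follow from exactness of the Gersten complex at $K_2(F)$ for the two-dimensional regular local ring $\OO_{\CC,P}$, but that ring has mixed characteristic and Gersten's conjecture is not available there off the shelf. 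The paper avoids the issue by a global argument: it identifies $K_2^T(\CC)=\ker(T_\CC)$ with the image of $K_2(\CC)$ in $K_2(F)$ by showing $E_2^{2,-3}(\CC)=0$, i.e.\ that $\coprod_\DD K_2(\F(\DD))\to\coprod_{y\in\CC^{(2)}}\F(y)^*$ is surjective --- proved by producing, for each closed point $y$ and each $\b$ in $\F(y)^*$, an element of $K_2$ of a number field $k_\OO$ with $\Spec(\OO)\subset\CC$ passing through $y$ that hits $\b$, as in \cite{dJ08} --- and then quotes the standard model-independence of the image of $K_2(\CC)$ in $K_2(C)$. To complete your approach you would have to import an arithmetic input of this kind to dispose of the constant; as written, the argument does not close.
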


\begin{proof}
One sees  as on~\cite[p.~13]{Schneider} that the
image of $ K_2(\CC) $ in $ K_2(C) $ under localization is independent
of~$ \CC $.
Now consider the Gersten-Quillen
spectral sequence $ E_1^{p,q}(\CC) =  \coprod_{y \in \CC^{(p)}} K_{-p-q}(\F(y)) \Rightarrow K_{-p-q} (\CC) $,
where $ \F(y) $ denotes the residue field of $ y $.
It is compatible with the one for $ C $ under localization.
Then the image of $ K_2(\CC) $ in the quotient $ E_\infty^{0,-2}(C) $
of $ K_2(C) $ is also independent of $ \CC $, so the same holds
for the image of $ E_\infty^{0,-2}(\CC) \to E_\infty^{0,-2}(C) $. The latter
equals $ \ker(T) $, and we are done if we show that the former equals $ \ker(T_\CC) $.
This equality is implied by the surjectivity of the differential
$ E_1^{1,-3}(\CC) = \coprod_{\DD} K_2(\F(\DD)) \to E_1^{2,-3}(\CC) = \coprod_{y \in \CC^{(2)}} \F(y)^* $.
In order to see this, for a closed point $ y $ in $ \CC $ and
$ \b $ in $ \F(y)^* $, let
$ \Spec(\OO) \subset \CC $ contain $ y $, where $ \OO $ is an
order in a number field $ k_\OO $. Then as on \cite[p.~171]{dJ08}
one sees that there is an element in $ K_2(k_\OO) $
with image $ \b $ at $ y $ in the localization sequence for $ K_*'(\OO) $.
By compatibility (with shift in codimension)
of the Gersten-Quillen spectral sequence with
the inclusion $ \Spec(\OO) \to \CC $, this shows what we want.
\end{proof}

We now turn towards the question of integrality of the elements
in~\eqref{eqn:rectangle} and~\eqref{eqn:triangle} when $ N=2 $
or~3.
For those values of $N$,  by using a coordinate transformation
and replacing $ \l $, we can transform~\eqref{eqn:curve}
into
\begin{eqnarray} \label{eqn:integralcurve}
f(x,y) &=& \l  \prod_{i=1}^{N_1}(x+\a_i)\prod_{j=1}^{N_2}(y+\b_j)\prod_{k=1}^{N_3}(y-x+\c_k) - 1
\,,
 \quad \l \ne 0
\,,
\end{eqnarray}
where $ N_1 \ge N_2 \ge N_3 $, and we take $ N_3= 0 $ if $ N=2 $.
(Note that all $ \a_i $ are distinct, all $ \b_j $ are distinct,
and all $ \c_k $ are distinct.)
We again let $ C $ denote the non-singular model of the closure in
$ \P^2 $ of the affine curve defined by $ f(x,y) = 0 $
with $ f(x,y) $ as in~\eqref{eqn:integralcurve}. In this case, we have
$P_1=[0,1,0]$, $P_2=[1,0,0]$ and, if $ N = 3 $,  $P_3=[1,1,0]$.
If the affine curve
is non-singular, then $ C $ has genus $ g = N_1 N_2 + N_1 N_3 + N_2 N_3 - N_1 - N_2 - N_3 + 1 $,
also if $ N_3 = 0 $.  With our conventions $ g \ge 1 $ unless $ N_2 = 1 $ and $ N_3 = 0 $,
and $ g = 1 $ occurs only for $ N_1 = N_2 = 2 $ and $ N_3 = 0 $,
or $ N_1 = N_2 = N_3 = 1 $.

\begin{theorem}\label{thm:main}
Let all notation be as above, and assume $ \l $, and all the
$ \a_i $, $ \b_j $ and $ \c_k $ are
algebraic integers, with $ \l \ne 0 $.
Then the elements given by~\eqref{eqn:rectangle} and~\eqref{eqn:triangle}
are integral.
\end{theorem}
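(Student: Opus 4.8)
The plan is to use Proposition~\ref{modelchoice}: it suffices to produce one regular, proper model $\CC/\OO_k$ of $C$ on which the tame symbol $T_\CC$ annihilates each element~\eqref{eqn:rectangle} and~\eqref{eqn:triangle}. By Lemma~\ref{K2Tlemma} these elements already lie in $K_2^T(C)$, so the tame symbols along the irreducible curves on $\CC$ that dominate $\Spec(\OO_k)$ vanish automatically; only the vertical prime divisors $\DD$, i.e.\ the components of the special fibres $\CC_\PP$, need attention. For $\CC$ I would take the normalisation of the closure of $\{f=0\}$ in $\P^2_{\OO_k}$, suitably resolved. Two features of the normalised equation~\eqref{eqn:integralcurve}, available only because $N\le3$, are crucial: the three points at infinity $P_1=[0,1,0]$, $P_2=[1,0,0]$, $P_3=[1,1,0]$ stay pairwise distinct modulo every prime, and every determinant $\detsymbol{i}{k}$ equals $\pm1$, so the coefficients $\detsymbol{i}{m}/\detsymbol{k}{m}$ and $\detsymbol{i}{k}/\detsymbol{m}{k}$ occurring in~\eqref{eqn:triangle} are units in every localisation. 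The whole problem then reduces to controlling, for each vertical $\DD$, the order $v_\DD(L_{i,j})$ and the reduction along $\DD$ of ratios $L_{i,j}/L_{i',j'}$.

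The vertical divisors meeting the affine chart are harmless: a closed point of $\CC_\PP$ in that chart satisfies $\bar\l\prod_{i,j}\bar L_{i,j}=1$, so no reduction $\bar L_{i,j}$ vanishes there; hence each $L_{i,j}$ is a unit in the local ring, and stays a unit along any exceptional divisor obtained by blowing such points up. Thus $v_\DD(L_{i,j})=0$ for all $i,j$, every entry of every symbol has $v_\DD=0$, and the tame symbol along $\DD$ is $1$. (When $\PP\mid\l$ the affine part of $\CC_\PP$ is empty, so there is nothing to do here.) Similarly, on a vertical component that is a line at infinity, numerator and denominator of any $L_{i,j}/L_{i',j'}$ acquire the same pole, so again $v_\DD=0$ and the tame symbol is trivial.

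What remains is an exceptional component $\DD$ over a single $P_a$, and here the determinant coefficients in~\eqref{eqn:triangle} do exactly the work required. The key computation is that over $P_a$ a ratio $L_{k,l}/L_{m,n}$ whose directions $k,m$ both differ from $a$ restricts along $\DD$ to the constant $\detsymbol{a}{k}/\detsymbol{a}{m}$, since $L_{k,l}$ and $L_{m,n}$ have poles of the same order along $\DD$ with leading coefficients $L_{k,l}(P_a)=\detsymbol{a}{k}$ and $L_{m,n}(P_a)=\detsymbol{a}{m}$, and such a ratio has $v_\DD=0$. For an $R$-element $\recsymbol{i}{j}{k}{l}{m}{n}$, the entry in whichever of the directions $i,l$ is not $a$ then restricts to the constant $1$ with $v_\DD=0$, and the tame symbol, being a power of it, is $1$. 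For a $T$-element with pivot direction $i$: if $a\in\{k,m\}$, say $a=k$, the second entry restricts along $\DD$ to $\frac{\detsymbol{i}{k}}{\detsymbol{m}{k}}\cdot\frac{\detsymbol{k}{m}}{\detsymbol{k}{i}}=1$ and has $v_\DD=0$, so the tame symbol is again $1$; if $a=i$, the two entries $f,g$ share the pivot $L_{i,j}$ while $L_{k,l},L_{m,n}$ do not vanish at $P_i$, so $v_\DD(f)=v_\DD(g)=:m$, and $f/g$ restricts to $\frac{\detsymbol{i}{m}\detsymbol{m}{k}}{\detsymbol{k}{m}\detsymbol{i}{k}}\cdot\frac{\detsymbol{i}{k}}{\detsymbol{i}{m}}=\frac{\detsymbol{m}{k}}{\detsymbol{k}{m}}=-1$, whence the tame symbol equals $(-1)^{m^2}(f/g)^m=(-1)^{m^2+m}=1$. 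Since for $N\le3$ every $T$-element uses all three directions, these cases are exhaustive, and the proof concludes.

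The step I expect to be the main obstacle is making the last paragraph precise: exhibiting a resolution near the line at infinity that works uniformly in $\PP$ — above all for $\PP\mid\l$, where the whole special fibre lies at infinity and is highly non-reduced, and while allowing the reductions of $\a_i$, $\b_j$, $\c_k$ to collide — and then reading off $v_\DD(L_{i,j})$ and the restrictions of the ratios along each exceptional component that appears. I expect a Hirzebruch--Jung-type analysis of the local model $\pi^{v_\PP(\l)}\cdot(\text{transverse coordinate})=(\text{boundary coordinate})^{\deg f}$ near each $P_a$, combined with the determinant identities above, to finish the argument.
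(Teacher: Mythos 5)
Your proof is correct and follows essentially the same route as the paper's: the same model (normalisation of the closure in $\P^2_{\OO_k}$, then resolution), the same case division according to whether a vertical $\DD$ dominates a component of $\CC'_\PP$ or contracts to an affine point or to a point at infinity, and the same exploitation of the fact that for $N\le 3$ the reductions of the $P_a$ stay distinct and the $\detsymbol{i}{k}$ are units. The one place you genuinely diverge is the case of a $\DD$ lying over some $\bar P_a$: the paper first uses Lemma~\ref{relations} to reduce to the single $T$-type symbol $\{h_1,h_2\}$ with $h_1=-\frac{x+\a_i}{y-x+\c_k}$ and $h_2=\frac{y+\b_j}{y-x+\c_k}$, and then uses the alternating symmetry to assume $\DD$ maps to $[0,1,0]$, where $h_2$ is regular and equal to $1$, so the tame symbol is $1$ with no valuation computation at all; you instead treat the pivot point $a=i$ head-on via $v_\DD(f)=v_\DD(g)=m$ and $f/g\equiv-1$, getting $(-1)^{m^2+m}=1$, which is a correct and pleasant alternative to the symmetry reduction. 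Two remarks. First, the worry in your final paragraph is unfounded: nothing in your third paragraph depends on how the resolution is built, since the only facts used --- that a rational function which is a unit at the closed point to which $\DD$ contracts restricts to a non-zero constant along $\DD$ with $v_\DD=0$, and that two entries differing by such a unit have equal $v_\DD$ --- hold for any proper birational $\CC\to\CC'$ and any $\DD$ mapping to a point; no Hirzebruch--Jung analysis is needed, and the paper performs none. Second, when $\PP\mid\l$ there can also be exceptional components over points of the line at infinity that are not reductions of any $P_a$; this case falls outside your stated trichotomy, but there every ratio $L_{i,j}/L_{k,l}$ is a unit at the point, so the tame symbol is trivially $1$.
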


\begin{proof}
We first assume $ N_3 \ge 1 $, so that by Lemma~\ref{relations}
it suffices to prove this for the $ M = \{h_1,h_2\} $ with
$ h_1 = -\frac{x+\a_i}{y-x+\c_k} $
and $ h_2 = \frac{y+\b_j}{y-x+\c_k}$, which are of $ T $-type.

We can obtain a regular proper model $ \CC $ of $ C $ as follows.
Let $ \OO $ be the ring of algebraic integers in the base field.
We start with the arithmetic surface $ \CC' $ in $ \P_\OO^2 $ defined by
the homogeneous polynomial $ F $ of degree $ N_1 + N_2 + N_3 $
associated to~\eqref{eqn:integralcurve}.
We then first take the normalisation of $ \CC' $.
The resulting surface has generic fibre $ C $, and any singularities of
the surface are contained in its fibres at prime ideals $ \PP $
of $ \OO $. Those are resolved through iterated blow-ups, resulting
in our model $ \CC $.

Now let $ \DD $ be an irreducible component of some $ \CC_\PP $.
We have to show that $T_{\DD}(M) = 1$.  The image of $ \DD $
inside $ \CC_\PP' \subset \P_\OO^2 $ is either an irreducible component of $ \CC_\PP' $,
or a point of that curve.
Note that $ \CC_\PP' $ is defined in $ \P_{\OO/\PP}^2 $ by the reduction of $ F $ modulo $ \PP $.
So if $ \l $ is not in $ \PP $, then
$ \CC_\PP' $ does not contain the reduction of any of
the lines as a component.
If $ \l $ is in $ \PP $, then $ \CC_\PP' $ only has
the line at infinity as component.  In either case the $ h_i $
do not have a zero or a pole along any irreducible component of $ \CC_\PP' $.
So if the image of $ \DD $ in $ \CC_\PP' $ is not a point, then
$ v_\DD(h_1) = v_\DD(h_2) = 0 $, and $ T_{\DD} (M) = 1 $.

Now assume that $ \DD $ maps to a point of $ \CC_\PP' $.  If it
maps to an affine point of $ \CC_\PP' $, then $ \l $ is not in $ \PP $, $ h_1 $ and $ h_2 $
are regular at that point and attain non-zero values.
Therefore they are constant and non-zero on $ \DD $, hence
$ v_\DD(h_1) = v_\DD(h_2) = 0 $, and $ T_\DD(M) = 1 $.
The same
holds if $ \DD $ maps to a point at infinity in $ \CC_\PP' $ not equal to
the reductions of $ P_1 $, $ P_2 $ or $ P_3 $.
(Note this can only happen if $ \l $ is in $ \PP $.)
The remaining case is when it maps to the reduction of one of
the $ P_j $. By Lemma~\ref{relations}, the $ T $-elements are
alternating for renumbering the $ P_j $, so we may
assume $ \DD $ maps to $ [0,1,0] $ in $ \CC_\PP' $. But on $ \CC' $
the function $ h_2 $ is regular and equal to 1 at this point, so $ h_2 $ is
constant and equal to 1 along $ \DD $. Hence $ T_\DD(M) = 1 $
also in this case.

For $ N_3 = 0 $, all elements are of $ R $-type, and the proof (using
the same model $ \CC $) is similar.
\end{proof}

\section{When are the curves hyperelliptic for $ N=2 $ or~3?} \label{section:hyperelliptic}

In this section we work over an arbitrary base field of characteristic
zero.

Since a lot of work has been done to find elements of $K_2$ of
(hyper)elliptic curves \cite{Kimura,Jeu06}, we want to know when $C$ is
(hyper)elliptic. If it is not, it means that we found
curves that are geometrically more general.

With an eye on the restriction to $ N=2 $ or 3 in Theorem~\ref{thm:main},
and in order to avoid messy calculations in the proof of
Proposition~\ref{prop:hyperelliptic} below,
we impose the same restriction here, and may assume our curve
is defined by $ f(x,y) = 0 $ with $ f(x,y) $ as in~\eqref{eqn:integralcurve}.
According to Proposition~\ref{prop:hyperelliptic},
with $ N_3 = 1 $,
for $ N_2 = 1 $ we obtain (hyper)elliptic curves of arbitrary positive
genus, and for $ N_1 \ge N_2 \ge 2 $ we obtain non-hyperelliptic
curves of arbitrary composite genus.
Similarly, if $ N_3 = 0 $ and $ N_2 \ge 3 $, we find non-hyperelliptic
curves of arbitrary composite genus $ (N_1-1)(N_2-1) $.

\begin{proposition}\label{prop:hyperelliptic}
Suppose the affine curve defined by
$ f(x,y) = 0 $ with $ f(x,y) $ as in~\eqref{eqn:integralcurve}, is non-singular
and that $ C $ has positive genus.
Then $C$ is (hyper)elliptic if and only if either
$N_2=N_3=1$, or $N_2=2$ and $N_3=0$.
\end{proposition}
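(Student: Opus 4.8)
The plan is to split into the cases $N=3$ with $N_3=1$ and $N=2$ (i.e.\ $N_3=0$), and in each case write down explicit curves and use the standard criterion that a curve of genus $\ge 2$ is hyperelliptic if and only if the canonical map fails to be an embedding, equivalently the canonical linear system has a base-point-free $g^1_2$; for genus $1$ every curve is elliptic, so there is nothing to do. First I would treat the ``if'' direction: when $N_2=N_3=1$ the affine equation is $\l(x+\a_1)\prod_{j}(y+\b_j)(y-x+\c_1)-1=0$, and after the substitution $u=y-x+\c_1$ the curve becomes, up to coordinates, something of the form $v\cdot p(u)= \mathrm{const}$ with $\deg p$ arbitrary and $v$ appearing to the first power in one factor; solving for $x$ (or for the ``$x+\a_1$'' factor) exhibits the function field as $k(u)(\sqrt{\,\cdot\,})$ — more precisely one of the variables is a rational function of the other plus a square root of a polynomial — so the curve is hyperelliptic (or elliptic), with the hyperelliptic involution visible. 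The case $N_2=2$, $N_3=0$ gives $\l(x+\a_1)\cdots(x+\a_{N_1})(y+\b_1)(y+\b_2)-1=0$, which is quadratic in $y$, hence hyperelliptic over $k(x)$ in the classical way; the genus $(N_1-1)(N_2-1)=N_1-1$ matches.

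For the ``only if'' direction I would compute the space of regular differentials explicitly using the adjoint/Brill–Noether description of $H^0(C,\Omega^1)$ for a plane curve with the prescribed singularities at infinity. Since the affine curve is smooth and the only singularities of $C'$ are the ordinary-type points $P_i$ of multiplicity $N_i$ at infinity, a basis of holomorphic differentials is given by $\dfrac{x^a y^b\,\dd x}{\partial_y f}$ with $(a,b)$ ranging over the lattice points in the Newton-polygon-type region cut out by the adjoint conditions at the $P_i$; the dimension of this space is exactly the genus formula already quoted. I would then look at the canonical embedding, or more efficiently at the linear system $|D|$ for a well-chosen effective divisor $D$ supported on the $\tP i j$: the functions $L_{i,j}/L_{k,l}$ and the differentials give enough sections to show, when $N_2\ge 2$ and $(N_2,N_3)\ne(2,0)$, that the canonical map separates points and tangent vectors, so $C$ is not hyperelliptic. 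A cleaner route is to exhibit a very ample divisor of the right degree, or to note that when $N_2\ge 2$ and $N_3\ge 1$ one gets a smooth plane model or a model with a low-degree non-special $g^r_d$ incompatible with being hyperelliptic (a hyperelliptic curve has a unique $g^1_2$ and all other special systems are built from it by Clifford, which one can contradict).

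The main obstacle, and where the real work lies, is the ``only if'' direction in the genuinely two-dimensional family of cases $N_1\ge N_2\ge 2$, $N_3=1$: here the curve is not obviously plane of low degree, the differentials are given by a somewhat awkward set of monomials, and one must rule out the hyperelliptic involution without simply staring at the equation. I expect to handle this by producing, from the $L_{i,j}$, two independent functions of low degree whose divisors force any would-be $g^1_2$ to be inconsistent — concretely, the functions $x+\a_1$, $x+\a_2$ (when $N_1\ge 2$) and $y+\b_1$, $y+\b_2$ (when $N_2\ge 2$) have disjoint, reduced support structures at infinity that a hyperelliptic pencil cannot accommodate once the genus is large enough. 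The bookkeeping of which monomial differentials survive the adjoint conditions at $P_1,P_2,P_3$, and matching the count with the genus to pin down the canonical image, is the part I would expect to be technical but routine; the conceptual content is just ``smooth plane-like curve with these singularities at infinity is non-hyperelliptic once it has two honest independent pencils.''
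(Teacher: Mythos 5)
Your ``if'' direction is sound: when $N_2=2$, $N_3=0$ the equation is quadratic in $y$, and when $N_2=N_3=1$ a similar substitution exhibits $C$ as a degree-$2$ cover of the $x$-line; this is essentially what the paper does after the proposition when it rewrites these curves in the forms~\eqref{eqn:hyper1} and~\eqref{eqn:hyper2}. Your description of the regular differentials as $x^a y^b(x-y)^c\,\dd x/\partial_y f$ subject to adjoint conditions at the $P_i$ also matches the paper, which verifies by an order computation at the points above $P_1,P_2,P_3$ that the forms $\Omega_{i,j,k}$ with $0\le i\le N_1-1$, $0\le j\le N_2-1$, $0\le k\le N_3-1$ span $H^0(C,\Omega_C)$.

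The genuine gap is the ``only if'' direction, which you acknowledge is ``where the real work lies'' but never actually carry out. The claim that two independent low-degree pencils (cut by $x$ and $y$, say) are ``incompatible with being hyperelliptic'' is not a proof and is not even obviously true as stated: a hyperelliptic curve admits maps to $\P^1$ of many degrees, and Clifford's theorem only constrains \emph{complete, special} linear systems, so you would first have to show the relevant $g^1_d$'s are special and analyse their complete spans --- none of which is done, and the degrees $N_2+N_3$ and $N_1+N_3$ of these pencils grow with the $N_i$, so specialness is not automatic. The paper avoids this entirely by using Max Noether's criterion: for $g\ge1$ the curve is (hyper)elliptic if and only if the products $h_ih_j$ of a spanning set of differentials $h_i\,\dd x$ span a space of dimension exactly $2g-1$. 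With the explicit monomial basis above, the span of the products is identified with a space $W$ of monomials $x^iy^j(x-y)^k$ (with each exponent at most $2N_\bullet-2$) modulo the multiples of $f$ landing in $W$, whose dimension $\dim U$ is computed by a highest-degree-term argument; the difference $\dim W-\dim U-(2g-1)$ comes out as an explicit polynomial in $N_1,N_2,N_3$ that vanishes precisely in the two listed cases. This single uniform count settles both implications at once and in particular disposes of your hard case $N_1\ge N_2\ge2$, $N_3=1$, where it equals $2(N_1-1)(N_2-1)>0$. To complete your proof you would need either to carry out this (or an equivalent) dimension count, or to supply the missing Clifford-type analysis of the pencils.
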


\begin{proof}
To prove this we shall use the following criterion of Max Noether (see \cite{Noether},
\cite[p.119]{Bernard}).
Suppose some $ h_i(x,y) \dd x $ span the space of holomorphic differentials on $ C $.
Consider the quadratic combinations $ \{h_i(x,y)h_j(x,y)\}_{i,j} $
of rational functions on~$ C $.
If $ C $ has genus $ g \ge 1 $, then they generate a space of
dimension at least $ 2g-1 $, and the
curve is (hyper)elliptic if and only if this dimension equals $2g-1$.

Assume $ N_3 \ge 1 $.
We can change the coordinates such that
all $\a_i,\b_j,\c_k \neq 0$. We first show that the forms
\begin{equation*}
\Omega_{i,j,k} = \frac{x^i y^j (x-y)^k \dd x}{\partial_yf(x,y)}
\qquad
(0\le i\le N_1-1,
 0\le j\le N_2-1,
 0\le k\le N_3-1)
\end{equation*}
generate $H^0(C,\Omega_C)$.

Because $\frac{\dd x}{\partial_yf(x,y)}=-\frac{\dd y}{\partial_xf(x,y)}$
and $\partial_x f(x,y),\partial_y f(x,y)$ do
not vanish at the same time by the non-singularity assumption,
the $\Omega_{i,j,k}$ are regular except possibly at the points
$ \tP i * $ above $P_1$, $P_2$ and $P_3$.
The order of $x$, $y$ and $x-y$ is
0, $-1$ and $-1$ at each of the points above $P_1$,
$-1$, 0 and $-1$ at the points above $P_2$, and
$-1$, $-1$ and $0$ at the points above $P_3$ respectively.
At the points above $P_1$, the order of $x+\a_i$ is
$N_2+N_3$ for exactly one of them and $0$ for the rest.
Hence the order of $\dd x$ is
$N_2+N_3-1$ at all points above $P_1$ and $-2$ at all points above $P_2$ or $P_3$.
The order of $\frac{1}{\partial_yf(x,y)}$ is $-1$, $N_1+N_3$ and $N_1+N_2$ at each
of the points above $P_1$, $P_2$ and $P_3$ respectively,
because
$\partial_yf(x,y) = \sum_{j=1}^{N_2} \frac{1}{y+\b_j}+\sum_{k=1}^{N_3} \frac{1}{y-x+\c_k}$.

\begin{table}[ht]
\caption{The order of some functions at the points $\tP i * $.}
\centering
\begin{tabular}{|l|l|l|l|}\hline
$ \vphantom{b^{b^{b^b}}} $
   & $\tP 1 * $ & $\tP 2 * $ & $\tP 3 * $\\ \hline
  $x$ & $0$ & $-1$ & $-1$ \\ \hline
  $y$ & $-1$ & $0$ & $-1$ \\ \hline
  $x-y$ & $-1$ & $-1$ & $0$ \\ \hline
  $\frac{1}{\partial_yf(x,y)}$ & $-1$ & $N_1+N_3$ & $N_1+N_2$ \\ \hline
\end{tabular}
\label{tab:order}
\end{table}

Therefore the order of $\Omega_{i,j,k}$ is $N_2+N_3-2-j-k$, $N_1+N_3-2-i-k$ and $N_1+N_2-2-i-j$ at each
of the points above $P_1,P_2$ and $P_3$ respectively.
Hence the $\Omega_{i,j,k} $ for $ 0\le i\le N_1-1, 0\le j\le N_2-1, 0\le k\le N_3-1$ are in $H^0(C,\Omega_C)$. Let $V$ be the vector space generated by these forms. Consider the polynomials $x^i y^j (y-x)^k$ for $ 0\le i\le N_1-1, 0\le j\le N_2-1, 0\le k\le N_3-1$.
As all $ N_i \ge 1 $, there are $(N_1-1)(N_2-1)(N_3-1)$ linear relations between these polynomials. Since the degree of these polynomials is less
than the degree of $f(x,y)$, all the linear relations between the
$\Omega_{i,j,k}$ come from the linear relations between these polynomials.
We conclude that the dimension of $V$ is $N_1N_2N_3-(N_1-1)(N_2-1)(N_3-1)=g$, so $V=H^0(C,\Omega_C)$.

In order to compute the dimension of the space spanned by the
quadratic combinations, let us start with the space $W$
of polynomials spanned by the
\begin{equation*}
x^iy^j(x-y)^k
\quad
(0\le i\le 2N_1-2, 0\le j\le 2N_2-2, 0\le k\le 2N_3-2)
.
\end{equation*}
It has dimension $ (2N_1-1)(2N_2-1)(2N_3-1) - (2N_1-2)(2N_2-2)(2N_3-2) $
as all $ N_i \ge 1 $.
On $ W $ we have to impose the relation given by $ f $.
If $f$ divides some $w $ in $ W$, so $w = uf$, then $H(w)=H(u)H(f)=x^{N_1}y^{N_2}(x-y)^{N_3}H(u)$,
where $H(\cdot)$ denotes the highest degree term of a polynomial. Since $w$ is in $ W$,
$H(w)$ is also in $ W$, and we can deduce that $H(u)$ is in the
space $U$ spanned by the
\begin{equation*}
x^iy^j(x-y)^k
\quad
(0\le i\le N_1-2, 0\le j\le N_2-2, 0\le k\le N_3-2)
.
\end{equation*}
Similarly, the highest term of $u-H(u)$ is in $ U$, so $u$ is in $ U$.
Hence the space of quadratic combinations of $ V $ has dimension
equal to $ \dim(W) - \dim(U) $.
Note that the dimension of $U$ is $(N_1-1)(N_2-1)(N_3-1)-(N_1-2)(N_2-2)(N_3-2)$ if
$N_1,N_2,N_3\ge 2$ and~0 otherwise.  In order to apply Noether's criterion, we compute
$ \dim(W) - \dim(U) - (2g-1) $, finding
\begin{equation*}
\begin{cases}
N_1N_2+N_1N_3+N_2N_3-N_1-N_2-N_3-1, \quad \text{if } N_1,N_2,N_3\ge 2, \\
2(N_1-1)(N_2-1), \quad \text{if } N_3 = 1.
\end{cases}
\end{equation*}
Because $ N_1 \ge N_2 \ge N_3 \ge 1 $, this is $0$ only if $ N_2 = N_3 = 1 $.

Similarly, if $N_3=0$ but $ N_2 \ge 2 $ (because the genus is positive), the forms
\begin{equation*}
\Omega_{i,j} = \frac{x^i y^j \dd x}{\partial_yf(x,y)}
\qquad
(0\le i\le N_1-2,
 0\le j\le N_2-2)
\end{equation*}
generate $H^0(C,\Omega_C)$.
Then we also consider the space of polynomials $W$ and relations $U$.
It is easy to see that
\begin{equation*}
\dim(W) - \dim(U) - (2g-1) =
\begin{cases}
(N_1-1)(N_2-1)-2, \quad \text{if } N_2 \ge 3,
\\
0, \quad \text{if } N_2 = 2.
\end{cases}
\end{equation*}
Because $ N_1 \ge N_2 $, this is $0$ if and only if $N_2=2$.
\end{proof}

In order to compare a curve $C$ satisfying the conditions in Proposition~\ref{prop:hyperelliptic}
with the (hyper)elliptic curves studied in~\cite{Jeu06},
we describe it as a ramified covering of $ \P^1 $ of degree~2, starting
from~\eqref{eqn:integralcurve}.

Firstly, if $N_2=2,N_3=0$, then we can assume that
$\a_1=\b_1=0 $ and $ \b_2=1$ by replacing
$x$ with $ x-\a_1 $ and  $y$ with $ (\b_2-\b_1) y - \b_1$.
Renumbering the non-zero $ \a_i $ and scaling $ \l $, we find $ C $ is defined by
\begin{equation} \label{eqn:hypercase1}
\l xy(y+1)\prod_{i=1}^{g}(x+\a_i) - 1 = 0
\,,
\end{equation}
where $g = N_1-1$ is the genus of the curve, and all $ \a_i $ are non-zero
and distinct.
Replacing $x$ with $ \frac{1}{x}$ and $y$ with
$ \frac{y+x^{g+1}}{\l\prod_{i=1}^{g}(\a_i x+1)}$, the equation of the curve becomes
\begin{equation} \label{eqn:hyper1}
y(y+2x^{g+1}+\l\prod_{i=1}^{g}(\a_i x+1))+x^{2g+2} = 0
\,.
\end{equation}
The element
$\left\{\frac{y+1}{y} , \frac{x+\a_i}{x}\right\}$
of type~\eqref{eqn:rectangle} for $C$ corresponding to~\eqref{eqn:hypercase1},
becomes
\begin{equation*}
 M_i
 = \left\{ \frac{y+x^{g+1}+\l\prod_{i=1}^{g}(\a_i x+1)}{y+x^{g+1}},  \a_i x+1 \right\}
 = \left\{-\frac{x^{g+1}}{y}, \a_i x+1\right\}
\end{equation*}
for the curve corresponding to~\eqref{eqn:hyper1}.
So $-2M_i$ is the element in \cite[Construction 6.11]{Jeu06} for the
factor $ \a_i x +1 $, which was considered in Example~10.8 of
loc.\ cit.\  for $ \l = \pm 1 $.

Secondly, if $N_2=N_3=1$, we can assume that $\b_1=\c_1=0$ by
replacing $x$ with $ x-\b_1+\c_1 $ and $ y $  with $ y - \b_1 $.
So $ C $ is defined by
\begin{equation}\label{eqn:hypercase2}
\l (y-x)y\prod_{i=1}^{g}(x+\a_i) - 1 = 0
\,,
\end{equation}
where $ g = N_1 $ and the $\a_i $ are distinct.
Replacing $x$ with $ \frac{1}{x}$ and $y $ with
$\frac{y+x^{g+2}}{\l x \prod_{i=1}^{g} (\a_ix+1)} + \frac{1}{x}$,
we find an equation
\begin{equation}\label{eqn:hyper2}
y(y+2x^{g+2}+\l\prod_{i=1}^{g}(\a_i x+1))+x^{2g+4} = 0
\,.
\end{equation}
The element
$$
\left\{\frac{y}{y-x}, -\frac{x+\a_i}{y-x} \right\}=\left\{\frac{y}{y-x}, -\frac{x+\a_i}{y-x}\right\}-\left\{\frac{y}{y-x},-\frac{x}{y-x}\right\} =\left \{\frac{y}{y-x}, \frac{x+\a_i}{x} \right\}
$$
of type~\eqref{eqn:triangle} for $C$ defined by~\eqref{eqn:hypercase2},
becomes $M_i=\{-\frac{x^{g+2}}{y}, \a_i x+1\}$ for the curve
defined by~\eqref{eqn:hyper2}.

Since $C$ defined by~\eqref{eqn:hyper1} is studied in
\cite[Example~10.8]{Jeu06},
we concentrate on $C$ defined by~\eqref{eqn:hyper2},
which can be of a different type (see Remark~\ref{different}).
Write
\begin{equation*}
  A(x) =  2x^{g+2}+\l\prod_{i=1}^{g}(\a_i x+1)
\,,
\end{equation*}
so that~\eqref{eqn:hyper2} becomes
\begin{eqnarray*}
  \left(y+\frac{A(x)}{2}\right)^2 - \frac{\l^2}{4}\prod_{i=1}^{g}(\a_i x+1)\prod_{j=1}^{g+2}(\mu_j x+1) = 0
\end{eqnarray*}
where $ 2x^{g+2}+A(x) = \l \prod_{j=1}^{g+2}(\mu_j x+1) $
for some $ \mu_j $,
which we assume to be in the base field by extending this if necessary.
We then have the following proposition similar to Propositions 6.3 and 6.14 of \cite{Jeu06}.

\begin{proposition} \label{prop:relation}
Let all notation be as above, with $\a_1,\dots,\a_g$ distinct.
Assume the resulting curve $ C $ has genus~$ g $ $($or, equivalently,
that $\mu_1,\dots, \mu_{g+2}$ are all distinct$)$.
With
\begin{eqnarray*}
\widetilde{M}_j = \left \{ -\frac{x^{g+2}}{y},  \mu_j x+1 \right \},
\quad
\M = \{-y, - x\},
\quad
\N =\left \{-\frac{x^{g+2}}{y}, - \frac{x^{g+2}}{\l} \right \}
\end{eqnarray*}
the following hold.
\begin{enumerate}
\item[(1)]
The $\widetilde{M}_j $ and $ \N$ are in $ K_2^T(C) $, and we
have
\begin{eqnarray*}
           2 \sum_{i=1}^{g} M_{i} + 2 \sum_{j=1}^{g+2} \widetilde{M}_j =  4 \N
\,,
\\
            \sum_{i=1}^g M_{i} =  \sum_{j=1}^{g+2} \widetilde{M}_j
.
\end{eqnarray*}
If $ \l^a = 1 $ then $ a \M $ is in $ K_2^T(C) $,
and $ 2 a \N = - 2 (g+2) a \M $.

\item[(2)]
If the base field is a number field, and $\l, \a_1,\dots, \a_g $ are algebraic
integers with $ \l $ a unit, then $ 2 \N $ and the $2\widetilde{M}_j $ are in
$\INTC$, and  if $ \l^a = 1 $ then $ a \M $ is in $ \INTC $.
\end{enumerate}
\end{proposition}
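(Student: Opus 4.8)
The plan is to do everything on the smooth projective model $ C $ of~\eqref{eqn:hyper2}, using its realisation as the hyperelliptic curve $ w^2 = \tfrac{\l^2}{4}\prod_i(\a_i x+1)\prod_j(\mu_j x+1) $ with $ w=y+A(x)/2 $. Writing $ u=-x^{g+2}/y $, equation~\eqref{eqn:hyper2} yields $ x^{2g+4}=-y\,(y+A(x)) $, hence $ u^2=-(y+A(x))/y $, and the two factorisations $ A(x)\mp 2x^{g+2}=\l\prod_i(\a_i x+1)$ resp.\ $\l\prod_j(\mu_j x+1) $, which on $ C $ become $ -(y+x^{g+2})^2/y $ resp.\ $ -(y-x^{g+2})^2/y $; consequently $ (2y+A(x))^2 = A(x)^2-4x^{2g+4} = \l^2\prod_i(\a_i x+1)\prod_j(\mu_j x+1) $, $ (y+x^{g+2})/(y-x^{g+2})=(1-u)/(1+u) $ and $ (2y+A(x))/x^{g+2}=-(1-u)(1+u)/u $. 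First I would record the geometry: there are two points $ \infty_\pm $ over $ x=\infty $ with $ u\to1 $, two points $ (0,0),(0,-\l) $ over $ x=0 $, and a single ramification point $ Q_i $ over $ x=-1/\a_i $ and $ R_j $ over $ x=-1/\mu_j $; since $ w=0 $ at $ Q_i $ and $ R_j $, the factorisations give $ u=1 $ at each $ Q_i $ and $ u=-1 $ at each $ R_j $. This produces $ \divisor(u)=(g+2)((0,-\l)-(0,0)) $, $ \divisor(\a_i x+1)=2(Q_i)-\infty_+-\infty_- $, $ \divisor(\mu_j x+1)=2(R_j)-\infty_+-\infty_- $, $ \divisor(-x^{g+2}/\l)=(g+2)((0,0)+(0,-\l)-\infty_+-\infty_-) $, together with the boundary values $ x^{2g+4}/y\to-\l $ at $ (0,0) $ and $ y\to-\l $ at $ (0,-\l) $.

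For part~(1), the memberships $ \widetilde M_j,\N\in K_2^T(C) $, and $ a\M\in K_2^T(C) $ when $ \l^a=1 $, are then a routine computation of the tame symbol of each symbol at the (few) points supporting the divisors of its entries; the nontrivial ingredients are only the values $ u(Q_i)=1 $, $ u(R_j)=-1 $, $ u(\infty_\pm)=1 $, the two boundary values above, the parity of $ (g+2)(g+3) $, and — for $ \M=\{-y,-x\} $, whose tame symbols at $ (0,0) $, $ (0,-\l) $, $ \infty_\pm $ work out to $ 1/\l $, $ \l $, $ 1 $ — the observation that raising to the $ a $-th power kills the powers of $ \l $. The relations I would prove in $ K_2(F) $, using Steinberg together with the facts $ 2\{-1,\cdot\}=2\{\cdot,-1\}=0 $ and $ \{c,c\}=-\{c,-1\} $. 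Thus $ \sum_i M_i-\sum_j\widetilde M_j=\{u,((1-u)/(1+u))^2\}=-2\{u,1+u\} $, and $ \{-u,1+u\}=0 $ by Steinberg makes $ \{u,1+u\}=\{-1,1+u\} $ two-torsion, so $ \sum_i M_i=\sum_j\widetilde M_j $; similarly $ \sum_i M_i+\sum_j\widetilde M_j-2\N=\{u,\l^2\prod_i(\a_i x+1)\prod_j(\mu_j x+1)/x^{2g+4}\}=2\{u,(2y+A(x))/x^{g+2}\} $, which by the identities above reduces to a $ \Z $-multiple of two-torsion, hence vanishes (this gives the stated $ 2\sum_i M_i+2\sum_j\widetilde M_j=4\N $ a fortiori); and expanding $ \N $ and $ \M $ bilinearly and using $ \l^{2a}=1 $, both $ 2a\N $ and $ -2(g+2)a\M $ collapse to $ -2a\{y,x^{g+2}\} $.

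For part~(2), $ M_i\in\INTC $ follows from Theorem~\ref{thm:main} applied to~\eqref{eqn:hypercase2}, which is~\eqref{eqn:integralcurve} with $ N_1=g $, $ N_2=N_3=1 $ and coefficients $ \a_i,\b_1=\c_1=0,\l $ all algebraic integers: the birational coordinate change to~\eqref{eqn:hyper2} is an identification of function fields carrying the relevant element of type~\eqref{eqn:triangle} to $ M_i $, so $ M_i\in\INTC $ by the model-independence of $ \INTC $ (Proposition~\ref{modelchoice}). Then $ \sum_j\widetilde M_j=\sum_i M_i\in\INTC $ and $ 2\N=\sum_i M_i+\sum_j\widetilde M_j\in\INTC $ by part~(1). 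The real obstacle is integrality of the \emph{individual} $ 2\widetilde M_j $, and of $ a\M $, which does not follow formally. Here I would use that when $ \l $ is a unit the $ \mu_j $ are algebraic integers, being roots of the monic polynomial $ \prod_j(\xi+\mu_j)=\xi^2\prod_i(\xi+\a_i)+4/\l $ with algebraic-integer coefficients; rescaling $ w $ and substituting $ \xi=1/x $ also realises $ C $ as the hyperelliptic curve $ W^2=\prod_i(\xi+\a_i)\prod_j(\xi+\mu_j) $ with $ 2g+2 $ distinct algebraic-integer branch points, under which $ 2\widetilde M_j $ becomes, up to sign, twice an element of the kind produced by \cite[Construction~6.11]{Jeu06}. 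Its integrality, and that of $ a\M $, would then be obtained exactly as in the proof of Theorem~\ref{thm:main}: construct a regular proper model by normalising the closure in $ \P^2_{\OO} $ and resolving singularities, and check that the tame symbol along each vertical prime divisor is trivial — the fact that $ \l $ is a unit (resp.\ a root of unity of order dividing $ a $) being precisely what forces the boundary contributions to be trivial (resp.\ trivial after multiplying by $ a $). I expect essentially all the work to be in this last model computation; the rest is bookkeeping of divisors and Steinberg relations.
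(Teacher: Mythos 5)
Your proposal is correct and follows essentially the same route as the paper: the same divisor and tame-symbol bookkeeping at $(0,0)$, $(0,-\l)$, the points at infinity and the branch points over $x=-1/\a_i$ and $x=-1/\mu_j$, with the relations reduced to Steinberg identities in $u=-x^{g+2}/y$ via the two factorisations of $A(x)\mp 2x^{g+2}$, and integrality handled by the model argument of \cite[Theorem~8.3]{Jeu06}; the paper simply cites \cite[Propositions~6.3 and~6.14]{Jeu06} for the relations you derive explicitly (your computations, including the stronger identity $\sum_i M_i+\sum_j\widetilde{M}_j=2\N$ and the monic polynomial $\xi^2\prod_i(\xi+\a_i)+4/\l$ certifying that the $\mu_j$ are algebraic integers, all check out). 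The one case you gloss over, and the paper explicitly flags as the simpler one, is when some $\a_i=0$, where there is a single (ramified) point at infinity rather than the two points $\infty_\pm$ your sketch assumes.
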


\begin{remark}
By the earlier coordinate transformations and Lemma~\ref{K2Tlemma},
the $ M_i $ are in $ K_2^T(C) $.
If the base field is a number field and $ \l, \a_1,\dots,\a_g $ are algebraic integers with $ \l \ne 0 $,
then they are in $\INTC$ by Theorem~\ref{thm:main}.
\end{remark}

\begin{proof}
First we look at the points at infinity of
$ C $ by taking
$x=1/\tilde{x}$ and $y=(\tilde{x}\tilde{y}-1)/\tilde{x}^{g+2}$,
so~\eqref{eqn:hyper2} gives
\begin{equation}\label{eqn:infinitymodel}
\tilde{y}^2 + \l(\tilde{x} \tilde{y}-1) \prod_{i=1}^{g}(\tilde{x}+\a_i)= 0
\,.
\end{equation}
If all $ \a_i \ne 0$ then there are two points at infinity,
namely $(0, \pm \sqrt{\l \prod_{i=1}^{g}\a_i})$, which we denote
by $\infty$ and $\infty'$. If some $ \a_i = 0 $ then there
is only one point at infinity, and we let $ \infty $ and $ \infty' $
both denote this point.

Let $P_{\mu_j}$ be the point $(-\mu_j^{-1},(-\mu_j^{-1})^{g+2})$,
and let $O$ and $O'$ be the points $(0,0)$ and $(0,-\l)$ respectively.
Then
\begin{eqnarray*}
\divisor(x) & = & (O) + (O') - (\infty) - (\infty'), \\
\divisor(y) &=& (2g+4)(O) - (g+2)(\infty) - (g+2)(\infty'), \\
\divisor(\mu_j x+1) & = & 2 (P_{\mu_j})-(\infty)-(\infty'), \\
\divisor \left(-\frac{x^{g+2}}{y}\right) &=& (g+2)(O') - (g+2)(O).
\end{eqnarray*}

Since $-\frac{x^{g+2}}{y}$ and $ \mu_j x+1$ only have zeros and poles at
$P_{\mu_j}, O, O', \infty$ and $\infty'$, the tame symbol of $\widetilde{M}_j$ is
trivial except at these points.
We discuss the case when all $ \a_i \ne 0 $, the other case being
simpler.
Then we have
\begin{eqnarray*}
T_{P_{\mu_j}}(\widetilde{M}_j) & = & (-1)^0 \left.\left(-\frac{x^{g+2}}{y}\right)^2\right|_{P_{\mu_j}} = 1, \\
T_{O}(\widetilde{M}_j) & = & (-1)^{0} \left.\frac{1}{(\mu_j x+1)^{-(g+2)}}\right|_{O}  = 1 , \\
T_{O'}(\widetilde{M}_j) & = & (-1)^{0} \left.\frac{1}{(\mu_j x+1)^{(g+2)}}\right|_{O'}  = 1 , \\
T_{\infty}(\widetilde{M}_j) & = & (-1)^0 \left.\frac{-y}{x^{g+2}}\right|_{\infty} = \left.1-\tilde{x}\tilde{y}\right|_{\infty}=1,\\
T_{\infty'}(\widetilde{M}_j) & = & (-1)^0 \left.\frac{-y}{x^{g+2}}\right|_{\infty'} = \left.1-\tilde{x}\tilde{y}\right|_{\infty'}=1.
\end{eqnarray*}

For $ \M $, we only need to calculate the tame symbol $O$, $ O'$, $ \infty$ and $\infty'$,
which gives
\begin{eqnarray*}
T_{O}(\M) & = & (-1)^{2g+4} \left.\frac{-y}{(-x)^{2g+4}}\right|_{O}  =  \l^{-1}, \\
T_{O'}(\M) & = & (-1)^0 \left. -y\right|_{O'}  = \l , \\
T_{\infty}(\M) & = & (-1)^{g+2}\left.\frac{(-y)^{-1}}{(-x)^{-(g+2)}}\right|_{\infty} = \left.\frac{1}{1-\tilde{x}\tilde{y}}\right|_{\infty}=1,\\
T_{\infty'}(\M) & = & (-1)^{g+2}\left.\frac{(-y)^{-1}}{(-x)^{-(g+2)}}\right|_{\infty'} = \left.\frac{1}{1-\tilde{x}\tilde{y}}\right|_{\infty'}=1.
\end{eqnarray*}
Here the first calculation uses that
$-x^{2g+4}/y =  y+2x^{g+2}+\l\prod_{i=1}^{g}(\a_i x+1) $
by~\eqref{eqn:hyper2}.
From this our claim for $ a \M $ is clear.
One checks similarly that $ \N $ is in $ K_2^T(C) $.
That $ 2 a \N = - 2 (g+2) a \M $ is easily checked.

The remaining relations in~(1) can be proved as in \cite[Propositions 6.3 and~6.14]{Jeu06}.
The statement about integrality can be proved as in \cite[Theorem 8.3]{Jeu06} except we use
the model defined by~\eqref{eqn:infinitymodel} at the points at infinity.
Note that the $ \mu_j $ are algebraic integers by our assumption
on $ \l $ and the $ \a_i $.
\end{proof}

\begin{remark} \label{different}
If the base field is $ \Q $ and $ \l $ is fixed, it is easy to give
examples of families defined by~\eqref{eqn:hyper2}
with exactly $ g $ rational Weierstrass points. Thus, the curves
are in general really different from those studied in \cite{Jeu06},
which all had at least $ g+1 $ rational Weierstrass points over
the base field.
\end{remark}

\section{Linear independence of the elements} \label{section:linearindependence}

In this section, we work over the base field $ \C $, and
look at the curves $ C $ constructed in
Section~\ref{section:construction} with arbitrary $ N \ge 2 $
as a family with one parameter $ t = 1/\l $.
We give a description of part of a basis of
$H_1(C(\C);\Z)$ when $ t $ is close to zero.
We compute the limit behaviour of the pairing~\eqref{eqn:pairing}
of those elements
with the elements in Proposition~\ref{prop:basis}, showing in
particular that the latter are usually linearly independent.
If all coefficients are in $ \R $ then those elements form a basis
of $H_1(C(\C);\Z)^-$, and we obtain a non-vanishing result for the
regulator. Of course, those results also apply to subfields of
$ \C $ or $ \R $ when $ t $ goes to 0, or fields that can be
embedded this way.
In particular, working over 
a number field, from Theorem~\ref{thm:main} we find examples
of curves of genus $ g $ with $ g $ independent integral elements
in $ K_2^T(C) $.
If this number field is $ \Q $, then we get examples with as many
independent integral elements as needed in Beilinson's conjecture.

For fixed $a_i$,  $ b_i$ and $ c_{i,j}$, the
normalisations
of the projective closures of the curves
defined by $ f(x,y) = 0 $ with $ f(x,y) $ as in~\eqref{eqn:curve}
form a family of curves $ C_t $ in the parameter $t = 1/\l$.
The affine part of $ C_t $ is defined by
\begin{equation} \label{eqn:tcurve}
 \prod_{i=1}^N \prod_{j=1}^{N_i} L_{i,j} -t = 0
\end{equation}
for $ t \ne 0 $, but we shall use the resulting curve $ C_0 $
for $ t=0 $ extensively.

By Lemma~\ref{lemma:smooth}, if we take $ t $ in a small enough
disc $ D $ around~0, then the resulting
curves $ C_t $ for $ t \ne 0 $  are regular.
So if we let $ X $ be the complex manifold with points
$ \{ C_t(\C) \}_{t \in D} $, then
the fibres $ X_t = C_t(\C) $ for $ t \ne 0 $ are Riemann surfaces of genus
$ g $ given by~\eqref{eqn:genus}.
The fibre $ X_0 $ consists of $\P_\C^1$s corresponding
to the affine lines defined by $L_{i,j} = 0$, and their points of intersection
correspond to the intersection points of those affine lines.

For the remainder of this section we make the following assumption.

\begin{assumption} \label{assumption}
No three of the lines defined by $ L_{i,j} = 0 $ meet in an affine point.
\end{assumption}

We shall make certain elements in $ H_1(X_t;\Z) $ with $ t \ne 0 $
more explicit than in the general theory
(see, e.g., \cite[Expos\'e~XV, Th\'eor\`eme~3.4]{SGA72}),
which will enable us to carry out the necessary calculations.

Let $ d = \sum_{i=1}^{N} N_i $ be the degree of~\eqref{eqn:tcurve}.
Changing coordinates, we may assume that for each
value of $ x $ in $ \C $ there are $ d $ or $ d-1 $ affine
points $ (x,y) $ in $ X_0 $, with the latter case
occurring for the $ x $-coordinate of the intersection point
of two of the lines.
Let $ \pi $ be the projection from the affine part of $ X $ to $ \C \times D $
obtained by mapping $ (x,y,t) $ to $ (x,t) $.
It is easy to see from~\eqref{eqn:tcurve} and Assumption~\ref{assumption}
that for $ t=0 $, $ \pi $ is unramified except at points $ (s, 0) $
with $ s $ the affine intersection point of two of the lines.

Now fix such an $ s $.
If we take a loop $ \c' = \{ z \text{ in } \C \text{ with } |z-s| = r \} $
for some $ r >0 $ small enough,
then $ \pi $ gives an unramified covering of $ \c' \times \{0\} $.
The same holds for $ \c' \times D $ if we shrink $ D $ to a small
enough disk containing $ 0 $.
Choose a parametrisation $ [0,1] \to \c' $ mapping 0 and
1 to some $ Q $.
Using the identity map on $ D $, we obtain a map
$ [0,1] \times D \to  \c' \times D $, which we can lift to a
map $ [0,1] \times D \to  X $
as $ D $ is simply connected.
From the restriction of this lift to $ \{0\} \times D  $ and $ \{1\} \times D $
we obtain sections of the covering by $ \pi $ of $ \{Q\} \times D $,
which may land in different branches of~$ \pi $.
But those sections either agree 
for all $ t $ in $ D $, or for no $ t $ in $ D $.
In particular, the resulting paths $ [0,1] \times \{t\} \to X_t $ are
all closed, or all non-closed.

Now let $ L $  be one
of the two affine lines intersecting at~$ s $, so $ \pi $
maps $ L \times\{0\} $ bijectively to $ \C \times \{0\} $.
This way $ \c' \times \{0\} $ lifts to
a loop $ \c_0 $ in $ L \times \{0\} $.
Parametrizing $ \c' $ as above, we can choose the lift of
$ [0,1] \times D \to X $ of $ [0,1] \times D \to \c' \times D $
that for $ t=0 $ coincides with the parametrisation of $ \c_0 $.
As $ \c_0 $ is a loop, we obtain a family of loops $ \c_t $ in $ X_t $.
Shrinking $ \c' $ and $ D $ if necessary, we can get
all points in $ \c_t $ to be as close to $ (s,t) $ as we want.

This way, with $ t $ small enough, for every point of intersection $ s $ in $ X_0 $,
we obtain a family of loops $ \c_s = \c_{s,t} $ with $ \c_{s,t} $ in $ X_t $.
(If $ t $ is clear from the context, we shall sometimes write $ \c_s $ where we mean the loop $ \c_{s,t} $.)
We shall choose $ g $ of them that give part of a basis of
$H_1(X_t;\Z)$ for $t\ne 0$ small enough, and later describe the
limit behaviour of the regulator pairing of such an element and
an element in Proposition~\ref{prop:basis}.

For this choice, let $ P_{i,j; k,l} $ be the (affine) point 
where $ L_{i,j} $ and $ L_{k,l} $ for $ i \ne k $ vanish,
and consider the set
\begin{equation*}
 S
=
 \{P_{i,j;k,l}| 1\le i<k \le N, 1\le j\le N_i, 1\le l\le N_k, (i,j)\ne (1,1), (i,k,l)\ne (1,2,1)\}
\,.
\end{equation*}
It consists of all points of intersection in $ X_0 $, except
for those where either $L_{1,1}$ vanishes
or both $L_{2,1}$ and some $L_{1,j}$ with $ 2\le j \le N_1$ vanish.
(For example, for $N=3$, if in~Figure \ref{fig:badfibre} the $L_{1,j}$,
$L_{2,l}$, and $L_{3,n}$ vanish at the diagonal, horizontal, and vertical lines respectively,
then the elements of $ S $ are the thick points.)
There are $\sum_{1\le i<k \le N}N_i N_k - \sum_{1\le k \le N} N_k + 1=g$
points in~$ S $.

\setlength{\unitlength}{1mm}
\begin{figure}[h]
\centering
\begin{picture}(60,60)
\multiput(-5, 5)(0, 10){3}{\line(1, 0){60}}
\multiput(30, 0)(10, 0){3}{\line(0, 1){60}}
\put(0,0){\line(1,1){60}}
\put(-10,0){\line(1,1){60}}
\multiput(0, 5)(0, 10){3}{
\multiput(30, 0)(10, 0){3}{\circle*{2}}
}
\multiput(15, 15)(10, 10){2}{\circle*{2}}
\multiput(30, 30)(10, 10){3}{\circle*{2}}
\end{picture}
\caption{Configuration of $ X_0 $, and the set $ S $.}
\label{fig:badfibre}
\end{figure}
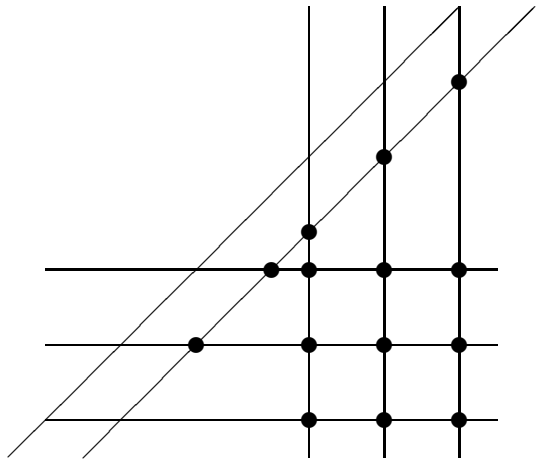

\begin{lemma} \label{lemma:basis}
With notation and assumptions as above, for $ t \ne 0 $ and $ t $ small enough:
\begin{enumerate}
\item[(1)]
$ \{\c_s\}_{s \in S} $ can be complemented to a basis of $ H_1(X_t;\Z) $;

\item[(2)]
if all $ a_i $, $ b_i $, $c_{i,j} $, and $ t $ are in $ \R$, then
$ \{\c_s\}_{s \in S} $ is a basis of $H_1(X_t;\Z)^- $.
\end{enumerate}
\end{lemma}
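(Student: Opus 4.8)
The plan is to reduce everything to the combinatorics of the dual graph of the bad fibre $ X_0 $, together with one standard input from Picard--Lefschetz theory, and to make ``complement to a basis'' completely explicit. Let $ \Gamma $ be the dual graph of $ X_0 $: its vertices are the lines $ L_{i,j} $ and its edges are the affine intersection points, which by Assumption~\ref{assumption} are all ordinary nodes, so $ \Gamma $ has no loops or multiple edges. It has $ d = \sum_i N_i $ vertices and $ \sum_{1\le i<k\le N} N_iN_k $ edges and is connected since $ N\ge 2 $, so its first Betti number is $ \sum_{i<k}N_iN_k-d+1=g $. For $ t\ne 0 $ small, $ X_t $ is obtained from $ X_0 $ by replacing each node by an annular neck, and $ \c_{s,t} $ is, up to homology and orientation, the core circle of the neck at $ s $ (the local Picard--Lefschetz picture underlying \cite[Expos\'e~XV, Th\'eor\`eme~3.4]{SGA72}); in particular the $ \c_{s,t} $ are pairwise disjoint, so $ \langle \c_s,\c_{s'}\rangle = 0 $ in $ H_1(X_t;\Z) $ for all nodes $ s,s' $.

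The combinatorial heart of the argument is that the $ d-1 $ nodes \emph{not} lying in $ S $ --- the $ d-N_1 $ nodes on $ L_{1,1} $, together with the $ N_1-1 $ nodes where $ L_{2,1} $ meets some $ L_{1,j} $ with $ 2\le j\le N_1 $ --- form, as edges of $ \Gamma $, a spanning tree $ T_0 $: their number is $ (d-N_1)+(N_1-1)=d-1 $, the first batch joins $ L_{1,1} $ to every vertex outside group~$ 1 $ and the second then joins $ L_{2,1} $ to $ L_{1,2},\dots,L_{1,N_1} $, so the subgraph they span is connected, and a connected graph on $ d $ vertices with $ d-1 $ edges is a tree. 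Thus $ S $ is exactly the set of chords of $ T_0 $, whence $ |S|=g $. For each chord $ s\in S $ with endpoints $ v,w $, let $ P $ be the unique path in $ T_0 $ from $ w $ to $ v $, and let $ \epsilon_s $ be the loop in $ X_t $ that crosses the neck at $ s $ once and then returns to $ v $ by crossing, once each, the necks at the nodes lying on $ P $, joining successive crossings by arcs inside the relevant components. Since $ P $ uses only edges of $ T_0 $, the loop $ \epsilon_s $ meets no neck at a node of $ S $ apart from the one at $ s $, which it meets exactly once, so $ \langle \c_{s'},\epsilon_s\rangle=\pm\delta_{s,s'} $ for $ s,s'\in S $.

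It follows that the $ 2g $ classes $ \{\c_s,\epsilon_s\}_{s\in S} $ have intersection matrix in block form $ \left(\begin{smallmatrix}0 & D\\ -D & B\end{smallmatrix}\right) $ with $ D $ diagonal with entries $ \pm 1 $, hence of determinant $ \pm1 $; since the intersection form on $ H_1(X_t;\Z) $ is unimodular (Poincar\'e duality), any $ 2g $ classes with unit Gram determinant form a $ \Z $-basis. In particular $ \{\c_s\}_{s\in S} $ is part of a $ \Z $-basis of $ H_1(X_t;\Z) $, which proves~(1). For~(2), assume all $ a_i,b_i,c_{i,j},t $ are real; then every relevant node $ s $ is a real point and, in a real-analytic local model $ uv=t $ with $ t>0 $ at $ s $, complex conjugation acts by $ (u,v)\mapsto(\bar u,\bar v) $, which preserves the core circle $ u=\sqrt t\,e^{\mathrm i\theta} $, $ v=\sqrt t\,e^{-\mathrm i\theta} $ while reversing its orientation; hence the class of $ \c_s $ is multiplied by $ -1 $, so $ \{\c_s\}_{s\in S}\subseteq H_1(X_t;\Z)^- $. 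This group is free of rank $ g $, and by~(1) the subgroup $ \sum_{s\in S}\Z\c_s $ it contains is saturated in $ H_1(X_t;\Z) $ and of rank $ g=|S| $, so it is all of $ H_1(X_t;\Z)^- $.

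I expect the main obstacle to be the local claims of the first paragraph: extracting from the explicit construction of $ \c_{s,t} $ via the projection $ \pi $ the ``neck'' description of $ X_t $ near each node, and then checking that the auxiliary loops $ \epsilon_s $ can be chosen so that the intersection numbers come out to be exactly $ \pm1 $ and $ 0 $. Granting that, the spanning-tree observation and the unimodularity argument are formal, as is the identification of $ H_1(X_t;\Z)^- $ in case~(2).
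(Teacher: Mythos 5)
Your proof is correct and follows essentially the same route as the paper: both construct, for each $ s \in S $, a dual loop meeting $ \c_s $ once and no other $ \c_{s'} $, observe that the resulting $ 2g\times 2g $ intersection matrix has determinant $ \pm 1 $, and deduce (2) from (1) by a rank count in $ H_1(X_t;\Z)^- $. Your spanning-tree/fundamental-cycle packaging of the dual loops is a clean way of organizing what the paper does by explicitly listing the polygonal loops $ (L_{1,1},L_{k,l},L_{m,n}) $, $ (L_{1,1},L_{2,1},L_{1,j},L_{2,n}) $ and $ (L_{1,1},L_{2,1},L_{1,j},L_{m,n}) $, which are precisely the fundamental cycles of the chords of your tree $ T_0 $.
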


\begin{proof}
If all $a_i$,  $b_i$, $c_{i,j} $ and $ t $ are real, then
$ \c_{s,0} $ is anti-invariant under complex conjugation
by construction and the same holds for $ \c' $. By the construction
of the family $ \c_{s,t} $ by lifting, $\c_{s,t}$ is also anti-invariant.
So the $\c_s$ are in $H_1(X_t;\Z)^{-}$, and
part~(2) follows from part~(1)
as $ H_1(X_t; \Z)^- $ has rank~$ g $.

In order to prove part~(1), we construct
another family of loops $ \{\delta_s \}_{s \in S} $. Note that, for
$ s $ the affine point of intersection of the two lines
defined by $ l_1 = 0 $ and $ l_2 = 0 $,
\eqref{eqn:tcurve} is of the form $ l_1 l_2 h - t = 0 $
where $ h(s) \ne 0 $.  One then sees easily that for $ t \ne 0 $ but small,
the point $ s $ splits into two ramification points,
and we can parametrise one of the two ramification points
for $ t $ in a suitable circle sector of $ D $.
Using this, one sees that any loop $ \delta $ in $ X_0 $ that
is obtained by connecting
distinct affine intersection points of lines using paths in the lines,
can be extended to a continuous family of loops $ \delta = \delta_t $
for $ t $ in such a sector.

We apply this to loops on $X_0$ that we denote as
\begin{eqnarray*}
&&(L_{1,1}, L_{k,l}, L_{m,n}), \quad 2\le k < m \le N, 1\le l \le N_k, 1\le n \le N_m, \\
&&(L_{1,1}, L_{2,1}, L_{1,j}, L_{2,n}), \quad 1<j\le N_1, 1<n\le N_2, \\
&&(L_{1,1}, L_{2,1}, L_{1,j}, L_{m,n}),
    \quad 2\le j \le N_1  , 3\le m \le N, 1\le n \le N_m
.
\end{eqnarray*}
Here with $ (l_1,\dots,l_p) $ we mean a loop obtained by starting
with as vertices the $ p $ intersection points of pairs of
lines defined by $ l_i = 0 $ and $ l_{i+1} = 0 $ (indices modulo~$ p $),
and connecting two consecutive vertices by
a path in the line containing both vertices.

It is easy to see that among those vertices there
is a unique $ s $ in $ S $, namely $ P_{k,l;m,n} $ in the first
case, $ P_{1,j;2,n} $ in the second, and $ P_{1,j;m,n} $ in the
third. We can choose the connecting paths to avoid all other
points in $ S $, so that on $ X_0 $ we have a loop $ \delta_s $
that contains $ s $ but no other points in $ S $, and on suitable
sectors in $ D $ all those $ \delta_s $ can be deformed as above.
Note that we defined exactly~$ g $ loops~$ \delta_s $.

For $ s $ and $ s' $ in $ S $, and working in a suitable sector
of a small enough $ D $ all the time, $\c_s$ and $\delta_{s'}$ in $ X_t $ can only intersect
if $ s = s' $.
Considering the different branches
in $ X $ for $ \pi $ above points close to $ (s,0) $, we
see that $ \c_s $ and $ \delta_s $ meet exactly once in
$ X_0 $, hence also in $ X_t $ with $ t \ne 0 $.
Changing the orientation of $\delta_s$ if necessary, we can assume
that $\c_{s} \cap \delta_{s'}$ equals 1 if $s=s'$ and 0 otherwise.
Since $ \c_s \cap \c_{s'} = 0 $ always, the intersection matrix of
$ \{\c_s\}_{s \in  S} \cup \{\delta_s\}_{s \in S}$
on $ X_t $ with $ t \ne 0 $
is of the form
\begin{equation*}
\left(
  \begin{array}{cc}
    0 & I_g \\
    -I_g & * \\
  \end{array}
\right)
\,.
\end{equation*}
As this has determinant 1, the $\c_s $ and $ \delta_s$ for $ s $
in $ S $ form a basis of $H_1(X_t;\Z)$.
\end{proof}

We want to establish a limit formula for the integral over $ \c_t $ of the regulator 1-form
$ \eta $ as in~\eqref{eqn:eta} obtained from the elements constructed in Section~\ref{section:construction}.
For this we shall need the following two lemmas.

\begin{lemma} \label{lemma:di}
Let $ V\subseteq \C$ be open and let $\varphi = \mu_1(u,t) \dd u + \mu_2(u,t)\dd t+ \mu_3(u,t)\dd\ol{t}$
be a $1$-form on $\R \times V$,
periodic in $u$ with period $p$. If
$\dd \varphi = \nu_1(u,t)\dd u \wedge \dd t + \nu_2(u,t)\dd u \wedge \dd \ol{t}+ \nu_3(u,t)\dd t \wedge \dd \ol{t}$,
then for any $c$ in $\R$ we have
\begin{equation*}
   \frac{\partial}{\partial t}\int_{u=c}^{c+p} \varphi
=
  -\int_{u=c}^{c+p}\nu_1(u,t) \dd u
\qquad\text{ and }\qquad
   \frac{\partial}{\partial \ol t } \int_{u=c}^{c+p} \varphi
=
  -\int_{u=c}^{c+p}\nu_2(u,t) \dd u
\,.
\end{equation*}
\end{lemma}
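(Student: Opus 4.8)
The plan is to differentiate under the integral sign. Since $ \varphi = \mu_1 \dd u + \mu_2 \dd t + \mu_3 \dd\ol t $, restricting to the path $ \{ t \text{ fixed} \} \times [c, c+p] $ in the $ u $-direction kills the $ \dd t $ and $ \dd\ol t $ components, so $ \int_{u=c}^{c+p}\varphi = \int_c^{c+p} \mu_1(u,t)\,\dd u $. Assuming enough regularity (the forms arising from $ \eta $ in~\eqref{eqn:eta} are smooth away from the divisors of the functions involved, and on the region $ \R \times V $ under consideration we are away from those), I can move $ \partial/\partial t $ and $ \partial/\partial\ol t $ inside the integral, reducing the claim to the identities $ \partial\mu_1/\partial t = -\nu_1 $ and $ \partial\mu_1/\partial\ol t = -\nu_2 $ up to terms that integrate to zero over the period.

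The second step is to extract exactly those identities from $ \dd\varphi $. Computing $ \dd\varphi $ from the given expression for $ \varphi $ and collecting the coefficient of $ \dd u \wedge \dd t $ gives $ \nu_1 = \partial\mu_2/\partial u - \partial\mu_1/\partial t $, and the coefficient of $ \dd u \wedge \dd\ol t $ gives $ \nu_2 = \partial\mu_3/\partial u - \partial\mu_1/\partial\ol t $. Hence
\begin{equation*}
 -\int_c^{c+p}\nu_1(u,t)\,\dd u = \int_c^{c+p}\frac{\partial\mu_1}{\partial t}\,\dd u - \int_c^{c+p}\frac{\partial\mu_2}{\partial u}\,\dd u
\,,
\end{equation*}
and similarly with $ \nu_2 $, $ \mu_3 $, $ \partial/\partial\ol t $ in place of $ \nu_1 $, $ \mu_2 $, $ \partial/\partial t $.

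The third step disposes of the spurious term: by the fundamental theorem of calculus, $ \int_c^{c+p}(\partial\mu_2/\partial u)\,\dd u = \mu_2(c+p,t) - \mu_2(c,t) = 0 $ because $ \varphi $, and hence its coefficient functions, are periodic in $ u $ with period $ p $. Likewise $ \int_c^{c+p}(\partial\mu_3/\partial u)\,\dd u = 0 $. What remains is precisely $ \int_c^{c+p}(\partial\mu_1/\partial t)\,\dd u = (\partial/\partial t)\int_c^{c+p}\mu_1\,\dd u = (\partial/\partial t)\int_{u=c}^{c+p}\varphi $, which is the first asserted formula; the $ \ol t $-derivative formula follows in the same way.

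The only genuine point requiring care — and the step I expect to be the main obstacle — is justifying the interchange of differentiation and integration, i.e. checking that the relevant partial derivatives of $ \mu_1 $ are continuous (or locally bounded) on the compact $ u $-interval as $ t $ varies in $ V $, so that a standard dominated-convergence or uniform-continuity argument applies. In the application this is automatic since the $ 1 $-form $ \varphi $ will be real-analytic in $ (u, t, \ol t) $ on the region where the loops $ \c_t $ live; I will simply note that $ \varphi $ is assumed smooth on $ \R \times V $, which makes all the manipulations above legitimate. Everything else is a routine bookkeeping of coefficients in $ \dd\varphi $ together with periodicity.
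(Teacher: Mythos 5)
Your proposal is correct and follows essentially the same route as the paper's own proof: restrict to the $u$-interval so the integral reduces to $\int_c^{c+p}\mu_1\,\dd u$, differentiate under the integral sign, read off $\nu_1=-\partial\mu_1/\partial t+\partial\mu_2/\partial u$ (and likewise $\nu_2$) from $\dd\varphi$, and kill the $\partial\mu_2/\partial u$ term by periodicity. Your signs and bookkeeping check out, and the remark on justifying the interchange of differentiation and integration is the same implicit smoothness assumption the paper relies on.
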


\begin{proof}
We note that
$$    \frac{\partial}{\partial t}\int_{u=c}^{c+p} \varphi
=
   \frac{\partial}{\partial t}\int_{u=c}^{c+p}\mu_1(u,t) \dd u
=
   \int_{u=c}^{c+p}\frac{\partial}{\partial t}\mu_1(u,t) \dd u
=
  -\int_{u=c}^{c+p}\nu_1(u,t) \dd u
\,.
$$
The first two identities here are clear, and the last one holds because
$\nu_1(u,t) = -\frac{\partial}{\partial t}\mu_1(u,t)+\frac{\partial}{\partial u}\mu_2(u,t)$,
and $\int_{u=c}^{c+p}\frac{\partial}{\partial u}\mu_2(u,t) \dd u = \mu_2(c+p,t) - \mu_2(c,t)=0$.
The other statement is proved similarly.
\end{proof}

Assume that in a neighbourhood of $ (0,0,0) $ in $ \C^3 $, we
have a surface $ Y $ defined by $x y h(x,y) - t = 0 $, with $ h(x,y) $
holomorphic around $ (0,0) $ and $ h(0,0) \ne 0 $.
Assume that there is a family of loops $\c_t$ in the fibres $ Y_t $,
with $ \c_0 $ a clockwise simple loop around $ 0 $ in the $ x $-axis in $ Y_0 $.
Furthermore, let $ u $ and $ v $ be holomorphic functions in $ x $ and $ y $
around $ (0,0) $ that do not vanish at $ (0,0) $.

\begin{lemma} \label{lemma:limit}
Let $Y$, $ u $, $ v $ and the $\c_t$ be as above and assume $ \c_0 $ is sufficiently small.
For
integers $ a $ and $ b $,
Let $ \psi(u x^a, v y^b)$ be the 1-form
$ \log|u x^a| \dd \arg(v y^b) - \log|v y^b| \dd \arg(u x^a)$
on an open part of $ Y \setminus Y_0 $, and let
$F(t)=\int_{\c_t} \psi(u x^a, v y^b) $ for $ t \ne 0 $
sufficiently small.
Then
$ F(t) = 2 \pi a b \log|t| + \re(H(t)) $
for a holomorphic function $ H(t) $ around $ t=0 $. In particular,
\begin{equation*}
\lim_{\abs{t}\to 0} \frac{F(t)}{\log \abs{t}} = 2\pi ab
\,.
\end{equation*}
\end{lemma}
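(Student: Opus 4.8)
The plan is to pass to the local holomorphic coordinates $ (x,t) $ on $ Y $ near $ \c_0 $ and to reduce $ F(t) $ to elementary contour integrals. The $ \partial_y $-derivative of $ xyh(x,y)-t $ equals $ x\bigl(h(x,y)+y\,\partial_y h(x,y)\bigr) $, which is non-zero at each point of $ \c_0 $ because there $ x\ne0 $, $ y=0 $ and $ h(0,0)\ne0 $; hence the implicit function theorem yields a holomorphic function $ y=y(x,t) $ with $ y(x,0)=0 $ on a neighbourhood of $ \{\,\abs{x}=r\,\}\times\{0\} $, where $ r $ is the (small) radius of $ \c_0 $. For $ t\ne0 $ and near the origin both $ ux^a $ and $ vy^b $ are holomorphic and non-vanishing, so $ \psi(ux^a,vy^b) $ restricts to a closed $ 1 $-form on $ Y_t $; hence, by the construction of the family $ \c_t $, we may assume $ \c_t $ is the lift $ \{\,(x,y(x,t)):\abs{x}=r\,\} $, traversed clockwise by $ x=re^{-i\theta} $, $ \theta\in[0,2\pi] $.

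Put $ \tilde u(x,t)=u(x,y(x,t)) $ and $ \phi(x,t)=v(x,y(x,t))\,h(x,y(x,t))^{-b} $; these are holomorphic and non-vanishing near the contour, and after shrinking $ r $ their restrictions to $ \c_t $ stay so close to their (non-zero) values at the origin that they wind $ 0 $ times around $ 0 $. On $ Y_t $ the relation $ xyh(x,y)=t $ gives $ ux^a=\tilde u\,x^a $ and $ vy^b=\phi\,t^b x^{-b} $, so by bilinearity of $ \psi $ and $ \psi(x,x)=0 $,
\begin{equation*}
\psi(ux^a,vy^b)=\psi(\tilde u,\phi)+b\,\psi(\tilde u,t)-b\,\psi(\tilde u,x)+a\,\psi(x,\phi)+ab\,\psi(x,t)
\,.
\end{equation*}
Along $ \c_t $ the quantities $ t $ and $ \abs{x}=r $ are constant, so $ \dd\arg t=0 $, $ \log\abs{t} $ and $ \log\abs{x}=\log r $ are constant, and therefore
$ \psi(x,t)|_{\c_t}=-\log\abs{t}\,\dd\arg x $,
$ \psi(\tilde u,t)|_{\c_t}=-\log\abs{t}\,\dd\arg\tilde u $,
$ \psi(\tilde u,x)|_{\c_t}=\log\abs{\tilde u}\,\dd\arg x-\log r\,\dd\arg\tilde u $, and
$ \psi(x,\phi)|_{\c_t}=\log r\,\dd\arg\phi-\log\abs{\phi}\,\dd\arg x $.

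Integrating, $ \int_{\c_t}\dd\arg x=-2\pi $ (the loop is clockwise), while $ \int_{\c_t}\dd\arg\tilde u=\int_{\c_t}\dd\arg\phi=0 $ since $ \tilde u $ and $ \phi $ do not wind; in particular the $ \psi(\tilde u,t) $-term vanishes and
\begin{equation*}
F(t)=2\pi ab\log\abs{t}-b\int_{\c_t}\log\abs{\tilde u}\,\dd\arg x-a\int_{\c_t}\log\abs{\phi}\,\dd\arg x+\int_{\c_t}\psi(\tilde u,\phi)
\,.
\end{equation*}
For $ g\in\{\tilde u,\phi\} $ a holomorphic branch of $ \log g(x,t) $ exists for $ x $ near $ \abs{x}=r $ and $ t $ near $ 0 $ (again by vanishing of the winding number), so
$ \int_{\c_t}\log\abs{g}\,\dd\arg x=-\int_{0}^{2\pi}\log\abs{g(re^{-i\theta},t)}\,\dd\theta=-\re\int_{0}^{2\pi}\log g(re^{-i\theta},t)\,\dd\theta $,
which is the real part of a function holomorphic in $ t $ near $ 0 $, the integrand being holomorphic in $ (x,t) $ near the compact contour. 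For the remaining term I would use the identity $ \psi(f,g)=\im\bigl(\log f\cdot\tfrac{\dd g}{g}\bigr)-\dd(\arg f\cdot\log\abs{g}) $, valid wherever $ f,g $ are holomorphic and non-vanishing: taking $ f=\tilde u $, $ g=\phi $, single-valued branches exist along $ \c_t $, the exact term integrates to $ 0 $, and $ \int_{\c_t}\psi(\tilde u,\phi)=\im\int_{\c_t}\log\tilde u\cdot\tfrac{\dd\phi}{\phi} $; since $ \im w=\re(-iw) $ this too is the real part of a function holomorphic in $ t $ near $ 0 $. Collecting everything, $ F(t)=2\pi ab\log\abs{t}+\re(H(t)) $ with $ H $ holomorphic near $ t=0 $, and the limit follows at once because $ \re(H(t)) $ stays bounded as $ t\to0 $.

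The only genuinely delicate point is the bookkeeping. The single logarithmic term $ 2\pi ab\log\abs{t} $ arises exactly from pairing the period $ -2\pi $ of $ \dd\arg x $ on the clockwise loop with the constant $ -\log\abs{t} $ coming from $ \abs{x}\abs{y}\abs{h}=\abs{t} $, and one has to recognise that every other contribution is either a period of $ \dd\arg $ of a non-winding function, hence $ 0 $, or the real part of a contour integral depending holomorphically on $ t $. Smallness of $ \c_0 $ (and of $ t $) is used precisely to guarantee that $ y(x,t) $ is holomorphic along the whole contour and that all the relevant winding numbers vanish.
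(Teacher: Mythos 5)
Your proof is correct, but it is not the argument the paper gives. The paper differentiates the period: using Lemma~\ref{lemma:di} it writes $\dd\psi = \tfrac{1}{2it}\omega\wedge\dd t - \tfrac{1}{2i\ol t}\ol\omega\wedge\dd\ol t$ with $\omega = (ab + xh_3+yh_4)\tfrac{\dd x}{x}$ on the total space $Y\setminus Y_0$, deduces $\partial F/\partial t = \pi ab/t + h_5(t)$, needs a second application of Lemma~\ref{lemma:di} just to see that $\int_{\c_t}(xh_3+yh_4)\tfrac{\dd x}{x}$ is holomorphic in $t$, and then integrates back up. You instead work entirely inside the fibre: the implicit function theorem gives the branch $y=y(x,t)$, the relation $xyh=t$ lets you split $vy^b$ multiplicatively into a non-winding unit times $t^bx^{-b}$, and bilinearity of $\psi$ reduces $F(t)$ to five explicit contour integrals, of which only $ab\,\psi(x,t)$ contributes $2\pi ab\log\abs{t}$ while the rest are visibly real parts of integrals depending holomorphically on $t$. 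Your route is more elementary (no differentiation under the integral over the family, no need for Lemma~\ref{lemma:di} at all) and it exhibits $H(t)$ explicitly. The one step you pass over with ``by the construction of the family $\c_t$, we may assume $\c_t$ is the lift'' deserves a sentence: since $\psi(ux^a,vy^b)$ is closed on each $Y_t$ (being the imaginary part of a $(2,0)$-form's primitive, or directly because $\dd\log(ux^a)\wedge\dd\log(vy^b)$ vanishes on a Riemann surface), and since for small $t$ the given loop $\c_t$ lies in the graph $\{y=y(x,t)\}$ over an annulus about $\abs{x}=r$ with $x$-winding number $-1$ by continuity from $\c_0$, it is homologous there to your parametrised circle, so $F(t)$ is unchanged. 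With that remark added, the argument is complete and yields the same conclusion as the paper's.
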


\begin{remark} \label{rem:thought}
Note that $ \psi_{|Y_t} $ is $ \eta(u x^a_{|Y_t}, v y^b_{|Y_t}) $ with $ \eta $
as in~\eqref{eqn:eta}. Therefore the lemma should be thought
of as describing the limit
behaviour of $ 2 \pi \langle \c_t , \{u x^a_{|Y_t}, v y^b_{|Y_t}\} \rangle $
with
$ \langle \,\cdot\, , \,\cdot\, \rangle $ as in~\eqref{eqn:pairing}.
\end{remark}

\begin{proof}
First note that $ x $ and $ y $ are non-zero on $ Y \setminus Y_0 $
so that the definition of $ F(t) $ makes sense because of our
assumptions on $ u $, $ v $ and $ \c_0 $.
If we can
write $\dd \psi(u x^a,v y^b) = \omega_1 \wedge \dd t + \omega_2 \wedge \dd \ol{t}$
with 1-forms $\omega_1$ and $\omega_2$
on a suitable open part of $ Y \setminus Y_0 $
containing the $ \c_t $ for $ t\ne 0 $ small enough,
then applying Lemma~\ref{lemma:di} to a parametrisation of
$ \{\c_t\}_{t \in D} $ gives
$ \frac{\partial F(t)}{\partial t} =  -\int_{\c_t}  \omega_1$.

In order to calculate $ \omega_1 $, we notice that on $ Y \setminus Y_0 $
where $ h $ does not vanish we have the identity
\begin{equation*}
\frac{h_x \dd x + h_y \dd y}{h}+\frac{\dd x}{x}+\frac{\dd y}{y}=\frac{\dd t}{t}
\,.
\end{equation*}
With $ h_1 = x h_x h^{-1} + 1 $ and $ h_2 = y h_y h^{-1} + 1 $
we have $ \frac{h_1}{x} \dd x + \frac{h_2}{y} \dd y = \frac{\dd t}{t} $.
Since $ h_2 $ is a holomorphic function of $ x $ and $ y $
around $ (0,0) $ with $ h_2(0,0) = 1 $, close enough to $ (0,0,0) $
on $ Y\setminus Y_0 $ we have
$ \dd x \wedge \dd y =  \frac{xy}{h_2} \frac{\dd x}{x} \wedge \frac{\dd t}{t} $
and
\begin{eqnarray*}
\dd\log(u x^a)\wedge \dd\log(v y^b) &=& (\dd\log u + a \dd\log x)\wedge (\dd\log v + b \dd\log y)
\\
&=&
 \frac{1}{h_2} \big(a b + \frac{b x u_x}{u} + a \frac{y v_y}{v} \big) \frac{\dd x}{x} \wedge \frac{\dd t}{t}
\\
&=&
(a b + x h_3 + y h_4 ) \frac{\dd x}{x} \wedge \frac{\dd t}{t}
\end{eqnarray*}
with $h_3(x,y)$ and $h_4(x,y)$ holomorphic functions around $ (0,0) $
as one sees from $ \frac{1}{h_2} = 1 - \frac{y h_y}{h h_2} $
and the properties of $ u $, $ v $, $ h $ and $ h_2 $.

Denote $(a b + x h_3 + y h_4 ) \frac{\dd x}{x}$ by $\omega$.
Clearly,
\begin{equation*}
  \dd \psi(u x^a, v y^b) = \im (\dd\log(u x^a)\wedge \dd\log(v y^b))
=
  \frac{1}{2i} \omega \wedge \frac{\dd t}{t} - \frac{1}{2i} \ol{\omega} \wedge \frac{\dd\ol{t}}{\ol{t}}
\,,
\end{equation*}
so that we can take
$\omega_1 = \frac{1}{2it}{\omega}$.
Then we have
\begin{equation*}
   \frac{\partial F(t)}{\partial t}
=
 - \int_{\c_t} \omega_1
=
 - \int_{\c_t} \frac{1}{2it} (a b + x h_3 + y h_4) \frac{\dd x}{x}
=
 \frac{\pi ab}{t} + h_5(t)
\end{equation*}
where $h_5(t)$ is a holomorphic function around $t = 0$.
Namely,
$ \int_{\c_t} (x h_3 + y h_4) \frac{\dd x}{x} $ is continuous
in $ t $, and 
vanishes for $ t=0 $ as $ y=0 $ on~$ \c_0 $, $ h_3 $ is holomorphic
around $ (0,0) $, and $ \c_0 $ is sufficiently small.
Moreover, our earlier expression for $ \dd x \wedge \dd y $
on $ Y \setminus Y_0 $ close to $ (0,0,0) $ shows that 
the holomorphic 2-form $ \dd ((x h_3 + y h_4) \frac{\dd x}{x}) $
there is of the form $ \frac{g(x,y)}{t} \dd x \wedge \dd t $
with $ g(x,y) $ holomorphic around $ (0,0) $.
Because $ \c_t \subset Y_t $, the $ \dd t $ will remain unchanged
under pullback under a parametrisation of the $ \{\c_t\}_t $ for $ t \ne 0 $
small enough.
From Lemma~\ref{lemma:di} we then see that
$\frac{\partial}{\partial \ol t}  \int_{\c_t} (x h_3 + y h_4) \frac{\dd x}{x} = 0 $
for such~$ t $ because $ \nu_2 $ is identically 0.
Therefore
$\int_{\c_t} (x h_3 + y h_4) \frac{\dd x}{x} $ is holomorphic for~$ t $
small enough, and vanishes for $  t = 0 $.

Since $\frac{\partial \log\abs{t}}{\partial t} = \frac{1}{2t}$, we have
$\frac{\partial (F(t) - 2\pi ab\log\abs{t})}{\partial t} = h_5$ around $t=0$.
Both $F(t)$ and $\log\abs{t}$ are real-valued, hence also
$\frac{\partial (F(t) - 2\pi ab\log\abs{t})}{\partial \ol{t}} = \ol{h_5}$
around $t=0$.
Therefore $F(t) - 2\pi ab\log\abs{t} = 2 \re(h_5(t)) + c $
for a real constant~$ c $.
\end{proof}

We now return to the curves $ X_t $ with $ t \ne 0 $ in $ D $
and $ D $ sufficiently small.
Here we have  classes $ \a_1, \dots, \a_g $
in $K_2^{T}(C)/\torsion$ from Proposition~\ref{prop:basis}, and
we pair those under the regulator pairing~\eqref{eqn:pairing} with
the $ g $ loops $ \c_s $ of Lemma~\ref{lemma:basis}.
We observed that we may assume that the points on $ \c_{s,t} $
in the family $ \c_s $ are as close to $ (s,t) $ as we want.
We then have the following limit result.

\begin{theorem} \label{thm:limit}
Let $X_t$ be defined by~\eqref{eqn:tcurve}, and assume no three
$ L_{i,j} $ meet at an affine point.
If the $a_i$, $ b_i $ and  $c_{i,j}$ are fixed,
then
\begin{equation*}
\lim_{t\to 0}\frac{ \det \left( \langle \c_s , \a_j \rangle \right) }{ \log^g \abs{t} }= \pm 1
\,.
\end{equation*}
\end{theorem}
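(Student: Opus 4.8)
The plan is to show that the matrix $\left(\langle \c_s,\a_j\rangle\right)_{s\in S,\,1\le j\le g}$ is, after division by $\log\abs t$, equal to an integer matrix of determinant $\pm1$ in the limit $t\to0$, plus an error term that vanishes in that limit. The key input is Lemma~\ref{lemma:limit}, which computes the leading $\log\abs t$-term of the pairing $2\pi\langle\c_{s,t},\{ux^a,vy^b\}\rangle$ when the loop $\c_{s,t}$ degenerates to a small loop around a node of $X_0$. So the first step is bookkeeping: for each $s=P_{i_0,j_0;k_0,l_0}\in S$ and each generator $\a_j$ from Proposition~\ref{prop:basis} (an $R$-symbol $\recsymbol{1}{1}{j}{2}{1}{m}$, or a $T$-symbol $\trisymbol{1}{1}{k}{l}{m}{n}$ or $\trisymbol{1}{j}{2}{1}{m}{n}$), I must write $\a_j$ in terms of local coordinates centred at $s$ and read off the relevant integers $a$, $b$.

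The second step is the local computation at a node $s$ where two lines, say those defined by $L_{p,q}=0$ and $L_{r,s'}=0$ with $p\ne r$, cross. Near $s$ the affine equation~\eqref{eqn:tcurve} has the form $L_{p,q}L_{r,s'}h - t = 0$ with $h$ a unit, so $\{L_{p,q}, L_{r,s'}\}$ locally looks like $\{ux,vy\}$ in the coordinates of Lemma~\ref{lemma:limit} (taking $x=L_{p,q}$, $y=L_{r,s'}$ up to units), giving $ab=1$; while a factor $L_{\nu,\mu}$ with $\{\nu,\mu\}$ disjoint from $\{(p,q),(r,s')\}$ is a nonvanishing holomorphic function near $s$, contributing a term with $a=b=0$, hence limit $0$. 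Thus, writing each entry $\langle\c_s,\a_j\rangle$ via bilinearity of the symbol and $\eta$ as a sum of pairings of the form in Lemma~\ref{lemma:limit}, I get
\[
\lim_{t\to0}\frac{\langle\c_s,\a_j\rangle}{\log\abs t} = n_{s,j}\in\Z,
\]
where $n_{s,j}$ counts (with signs and multiplicities coming from the orientation of $\c_s$ and the exponents of the linear forms in $\a_j$) how the two linear forms appearing in $\a_j$ interact at the node $s$. Concretely, $n_{s,j}\ne0$ precisely when \emph{both} lines crossing at $s$ occur among the (at most four) lines used to build $\a_j$, i.e.\ when $s$ is the distinguished vertex of the polygon defining $\a_j$ — which by the construction in the proof of Lemma~\ref{lemma:basis} is exactly the bijection $\a_j\leftrightarrow\delta_{s_j}\leftrightarrow s_j\in S$.

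The third step is to identify the resulting integer matrix $(n_{s,j})$. By the previous paragraph it is supported on the bijection $j\mapsto s_j$ given by matching the generator $\a_j$ of Proposition~\ref{prop:basis} with the unique point of $S$ among the vertices of its defining polygon; off that bijection the entries are $0$ because at any other node at most one of the relevant lines passes through, forcing $ab=0$. Hence $(n_{s,j})$ is, after permuting rows, a diagonal matrix; its diagonal entries are $\pm1$ because at the matched node the two crossing lines meet the two `arguments' of the symbol $\a_j$ each to the first power (the linear forms $L_{i,j}$ occur to exponent $\pm1$ in~\eqref{eqn:rectangle} and~\eqref{eqn:triangle}, and the determinant factors $\detsymbol ik$ are constants contributing $a=b=0$), so $ab=\pm1$ there. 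Therefore $\det(n_{s,j})=\pm1$, and since the determinant is a continuous (indeed, by the $\re(H(t))$ shape in Lemma~\ref{lemma:limit}, real-analytic) function of $t$ near $0$, dividing by $\log^g\abs t$ and letting $t\to0$ gives the claimed limit $\pm1$.

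The main obstacle I anticipate is the second step: verifying carefully that the hypotheses of Lemma~\ref{lemma:limit} genuinely apply to each loop $\c_{s,t}$ — i.e.\ that $\c_{s,0}$ is a small clockwise loop around the node in one of the two crossing lines, that Assumption~\ref{assumption} guarantees the local model $L_{p,q}L_{r,s'}h-t=0$ with $h(s)\ne0$, and that the units $u,v$ into which one absorbs the change of affine coordinate and the extra linear factors are indeed holomorphic and nonvanishing at $s$ — and keeping track of signs: the sign of $ab$ at the matched node, the orientation chosen for $\c_s$ in Lemma~\ref{lemma:basis} (fixed there so that $\c_s\cap\delta_{s'}=\delta_{ss'}$), and the sign of the permutation matching $j$ to $s_j$. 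None of these is deep, but the determinant is only pinned down up to sign, which is why the statement asserts $\pm1$ rather than a specific value; getting even that much requires the bookkeeping to be done consistently.
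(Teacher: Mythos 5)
Your overall strategy is the paper's: apply Lemma~\ref{lemma:limit} node by node, conclude that each entry $\langle \c_s,\a_j\rangle/\log\abs{t}$ tends to an integer, and identify the limit matrix. But there is a genuine gap in your third step. You claim the limit matrix $(n_{s,j})$ is supported on a bijection between generators and points of $S$, arguing that $n_{s,j}\ne 0$ forces $s$ to be ``the distinguished vertex of the polygon defining $\a_j$,'' and you identify that polygon with the loop $\delta_{s_j}$ from Lemma~\ref{lemma:basis}. These are different polygons, and the claim fails for the third family of generators. Take $\a=\trisymbol{1}{j}{2}{1}{m}{n}$ with $2\le j\le N_1$, $m\ge 3$: the lines entering this symbol are $L_{1,j}$, $L_{2,1}$, $L_{m,n}$, and of the three pairwise intersection points, \emph{two} lie in $S$, namely $P_{1,j;m,n}$ and $P_{2,1;m,n}$ (only $P_{1,j;2,1}$ is excluded by the definition of $S$). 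So this generator has a nonvanishing limit pairing with both $\c_{P_{1,j;m,n}}$ and $\c_{P_{2,1;m,n}}$, and the limit matrix is not a signed permutation matrix. (The quadrilateral $(L_{1,1},L_{2,1},L_{1,j},L_{m,n})$ used for $\delta_s$ does meet $S$ in a single vertex, but that is a different polygon from the triangle underlying the symbol.)

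The conclusion can still be rescued, and the paper does exactly this: it replaces $\trisymbol{1}{j}{2}{1}{m}{n}$ by $M_{s'}=\trisymbol{1}{j}{2}{1}{m}{n}-\trisymbol{1}{1}{2}{1}{m}{n}$, a determinant-preserving column operation (the subtracted term is a generator of the second family), which cancels the $\eta(L_{2,1},L_{m,n})$ contribution and leaves only $-\eta(L_{1,j},L_{m,n})$ surviving on $S$; after this modification the limit matrix really is a signed permutation matrix. Equivalently, one can observe that with a suitable ordering of $S$ and of the generators the unmodified matrix is block triangular with $\pm1$ diagonal entries, since the extra entry of each third-family generator sits in the row of a second-family point; either repair gives $\det=\pm1$. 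As written, however, your assertion that ``off that bijection the entries are $0$ because at any other node at most one of the relevant lines passes through'' is false whenever $N\ge 3$, so the proof needs this additional step. Your first two steps (the local model $L_{p,q}L_{r,s'}h-t=0$ guaranteed by Assumption~\ref{assumption}, absorbing the other linear forms and the constants $\detsymbol{i}{k}$ into the units $u,v$ with $a=b=0$, and bilinearity of $\eta$) match the paper and are fine.
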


Before giving the proof of the theorem, we have the following immediate corollary,
which is the main result of this section.
Note that we appeal to Theorem~\ref{thm:main}, and implicitly
embed the number field that is the base field into the complex
numbers in order to apply Theorem~\ref{thm:limit}.

\begin{corollary} \label{cor:int}
Let $C$ be defined by~\eqref{eqn:curve} and assume no
three of the lines defined by $ L_{i,j} =0 $ meet in an affine point.
If the $a_i$, $ b_i $  and $c_{i,j}$ are fixed, and $\abs{\l}\gg 0$, then
the elements are independent in $K_2^T(C)$.
In particular, for $C$ defined by~\eqref{eqn:integralcurve} with
$ |\l| \gg 0 $ and satisfying the condition of Theorem \ref{thm:main}, we have $g$ independent elements in $\INTC$,
where $ g $ as in~\eqref{eqn:genus} is the genus of~$ C $.
\end{corollary}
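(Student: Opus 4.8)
The plan is to evaluate the determinant in the limit entry by entry, using Lemma~\ref{lemma:limit} for each entry, and then to recognise the resulting integer matrix (after a suitable ordering of the $ \c_s $ and the $ \a_j $) as block upper triangular with diagonal blocks that are themselves diagonal with entries $ \pm 1 $.

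For a single entry, fix $ s = P_{i,j;k,l} \in S $ (so $ i < k $) and let $ \a = \{f_1,f_2\} $ be one of the generators listed in Proposition~\ref{prop:basis}. Since the lines $ L_{i,j} = 0 $ and $ L_{k,l} = 0 $ are non-parallel, $ (L_{i,j},L_{k,l}) $ is a local analytic coordinate system near $ s $, and by Assumption~\ref{assumption} the product $ h $ of all $ L_{a,b} $ with $ (a,b) \ne (i,j),(k,l) $ does not vanish near $ s $; so near $ (s,0) $ the surface $ X $ is defined by $ L_{i,j} L_{k,l} h - t = 0 $, of the shape needed in Lemma~\ref{lemma:limit}, with $ \c_{s,0} $ a small simple loop around $ s $ inside one of the two lines. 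Each $ f_\mu $ is a non-zero constant times a ratio of products of the $ L_{a,b} $, so near $ s $ we may write $ f_\mu = u_\mu L_{i,j}^{a_\mu} L_{k,l}^{b_\mu} $ with $ u_\mu $ a unit near $ s $ and $ a_\mu $, $ b_\mu $ the exponents of $ L_{i,j} $, $ L_{k,l} $ in $ f_\mu $. Expanding $ \eta(f_1,f_2) $ by bi-additivity into pieces of the form $ \eta(\cdot,\cdot) $ in monomials and units, I would show that over the loops $ \c_{s,t} $ every piece involving a $ u_\mu $, together with $ \eta(L_{i,j}^{a_1},L_{i,j}^{a_2}) = 0 $ and $ \eta(L_{k,l}^{b_1},L_{k,l}^{b_2}) = 0 $, integrates to $ O(1) $ as $ t \to 0 $: a unit has winding number $ 0 $ along $ \c_{s,t} $ and bounded absolute value, and on $ \c_{s,t} $ exactly one of $ \log\abs{L_{i,j}} $, $ \log\abs{L_{k,l}} $ equals $ \log\abs{t} + O(1) $ while the other is $ O(1) $, so that together with $ \int_{\c_{s,t}} \dd\arg u_\mu = 0 $ the mixed unit/monomial terms drop out. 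Only $ \eta(L_{i,j}^{a_1},L_{k,l}^{b_2}) $ and $ \eta(L_{k,l}^{b_1},L_{i,j}^{a_2}) $ survive, and Lemma~\ref{lemma:limit} (with its $ x $, $ y $ matched to $ L_{i,j} $, $ L_{k,l} $ according to which line carries $ \c_{s,0} $) then gives
\begin{equation*}
\lim_{t \to 0} \frac{\langle \c_s, \a \rangle}{\log\abs{t}} = \varepsilon_s\,(a_1 b_2 - a_2 b_1)
\,,
\end{equation*}
where $ \varepsilon_s = \pm 1 $ depends only on the orientation of $ \c_{s,0} $ and on which of the two lines it lies in, hence is attached to the row $ s $ and is the same for all $ \a $.

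Next I would pair the generators of Proposition~\ref{prop:basis} with the points of $ S $ by
$ \recsymbol{1}{1}{j}{2}{1}{m} \leftrightarrow P_{1,j;2,m} $,
$ \trisymbol{1}{1}{k}{l}{m}{n} \leftrightarrow P_{k,l;m,n} $ for $ 2 \le k < m $, and
$ \trisymbol{1}{j}{2}{1}{m}{n} \leftrightarrow P_{1,j;m,n} $ for $ m \ge 3 $;
comparing with the definition of $ S $ shows that this is a bijection, the exclusions in $ S $ (points where $ L_{1,1} $ vanishes, or where $ L_{2,1} $ meets some $ L_{1,j} $) being exactly what is needed, and the count agrees with Remark~\ref{remark:genus}. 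Using the exponents of the $ L_{a,b} $ in the two entries of each generator and the formula $ a_1 b_2 - a_2 b_1 $ from the previous step, I would verify: every diagonal entry is $ \pm 1 $; a generator of $ R $-type, or of the first $ T $-type $ \trisymbol{1}{1}{k}{l}{m}{n} $, has a non-zero limit entry only at its own point; and a generator $ \trisymbol{1}{j}{2}{1}{m}{n} $ of the last type has non-zero limit entries only at its own point $ P_{1,j;m,n} $ and at $ P_{2,1;m,n} $ (the other a priori possible point, $ P_{1,j;2,1} $, being one of those excluded from $ S $). Ordering rows and columns so that the $ P_{k,l;m,n} $ with $ 2 \le k < m $ come first, the $ P_{1,j;2,m} $ next, and the $ P_{1,j;m,n} $ with $ m \ge 3 $ last, the limit matrix becomes block upper triangular with diagonal blocks that are diagonal with $ \pm 1 $ entries, hence has determinant $ \pm 1 $. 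Since the entries of $ (\langle \c_s, \a_j \rangle) $ are $ \log\abs{t} $ times this matrix plus $ o(\log\abs{t}) $, the quotient $ \det(\langle \c_s, \a_j \rangle)/\log^g\abs{t} $ tends to $ \prod_{s} \varepsilon_s $ times that determinant, so the limit is $ \pm 1 $; together with Lemma~\ref{lemma:basis} and Proposition~\ref{prop:basis}, which make the statement meaningful, this proves the theorem.

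The delicate point is the first step, specifically the claim that after the bi-additive expansion only the two ``crossed'' monomial symbols contribute in the limit: the dangerous terms are the $ \eta(u_\mu, L_\bullet^\bullet) $ in which the logarithm of the vanishing coordinate blows up like $ \log\abs{t} $, and these cancel precisely because a unit has vanishing winding number along $ \c_{s,t} $. This is a variant of the argument proving Lemma~\ref{lemma:limit}. The remainder — identifying $ S $, the bijection with the generators, and the triangular shape of the limit matrix — is combinatorial bookkeeping, but it is exactly the exclusions built into $ S $ that make that matrix genuinely block triangular rather than merely invertible over $ \Z $.
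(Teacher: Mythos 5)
Your argument is correct and follows the paper's own route: the corollary is deduced there immediately from Theorem~\ref{thm:limit} (together with Theorem~\ref{thm:main} for integrality), and your proposal is in substance a proof of that theorem by the same local computation at the nodes of $X_0$ via Lemma~\ref{lemma:limit}, the only cosmetic difference being that the paper replaces the generators $\trisymbol{1}{j}{2}{1}{m}{n}$ by $\trisymbol{1}{j}{2}{1}{m}{n}-\trisymbol{1}{1}{2}{1}{m}{n}$ (a unimodular column operation) so that its limit matrix is diagonal rather than block upper triangular. To match the full statement you should still record the one-line deduction that non-vanishing of $\det(\langle \c_s,\a_j\rangle)$ for $\abs{t}$ small forces independence modulo torsion (hence in $K_2^T(C)$), and cite Theorem~\ref{thm:main}, after embedding the number field into $\C$, for the claim that the elements lie in $\INTC$.
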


\begin{proof}[Proof of Theorem \ref{thm:limit}]
To every $s' = P_{i,j;k,l} $ in $ S$ with $ i < k $, we associate
an element $M_{s'}$ in $K_2^T(C)$, namely
\begin{equation*}
M_{s'} =
\begin{cases}
  \recsymbol{1}{j}{1}{2}{l}{1} & \text{if } i=1, k=2,\\
  \trisymbol{1}{1}{i}{j}{k}{l} & \text{if } 2\le i < k \le N, \\
  \trisymbol{1}{j}{2}{1}{k}{l}-\trisymbol{1}{1}{2}{1}{k}{l} & \text{if } i=1, k>2.
\end{cases}
\end{equation*}
Note that the elements above are the same as the elements in Proposition \ref{prop:basis}
in the first two cases,
and in the last case it is an element minus an element of the second case.
So the regulator of these elements is the same as the regulator of those
in Proposition \ref{prop:basis},
and the theorem will be proved if we show that
\begin{equation*}
   \lim_{t \to 0} \frac{ \langle \c_s , M_{s'} \rangle}{\log\abs{t}}
=
   \begin{cases}
        \pm 1, & \text{if } s=s', \\
        0, & \text{otherwise}.
   \end{cases}
\end{equation*}

Recall that
$ \langle \c_s , M_{s'} \rangle = \frac{1}{2\pi} \int_{\c_{s}}{\eta(M_{s'})} $,
with $ \eta(M_{s'}) $ as in~\eqref{eqn:eta}.
We shall compute this limit using Lemma~\ref{lemma:limit} (cf.~Remark~\ref{rem:thought}).
As we mentioned just before the theorem, for $ s $ in $ S $ we can assume that the
points in the loop $ \c_{s,t} $ are all as close to $ (s,t) $
as we want. If $ L_1 = 0 $ and $ L_2 = 0 $
are equations of distinct lines through $ s $, we can change
coordinates so that the lines are $ x=0 $ and $ y=0 $, and the
loops corresponding to $ \c{_s,t} $ are as close to $ (0,0,t) $
as we want.
Since $ \psi(x,y) = \eta(x,y) $ with $ \eta $ as in~\eqref{eqn:eta},
under such a coordinate change we obtain from Lemma~\ref{lemma:di} that
\begin{equation*}
\frac{1}{2\pi}\lim_{t\to 0} \frac{\int_{\c_{s}}{\eta(L_{i,j},L_{k,l})}}{\log \abs{t}}
=
\begin{cases}
  \pm 1 \text{ if } L_{i,j}\text{ and } L_{k,l} \text{ vanish at } s, \\
  0 \text{ otherwise}.
\end{cases}
\end{equation*}
Now take $s'=P_{i,j;k,l}$ in $ S $ with $ i < k $. Since $ S $
does not contain the point at which both $L_{1,1}$ and $ L_{i,j} $ for $ i>1 $
vanish,
nor the point at which both $L_{2,1}$ and $ L_{1,j} $ vanish,
we see by expanding $\eta({M_{s'}})$ from~\eqref{eqn:rectangle} and~\eqref{eqn:triangle}
that
for $i=1,k=2,$
\begin{equation*}
\frac{1}{2\pi}\lim_{t\to 0} \frac{\int_{\c_s} \eta(M_{s'})} {\log \abs{t}} =
  \frac{1}{2\pi}\lim_{t\to 0} \frac{\int_{\c_s} \eta(L_{1,j} , L_{2,l} )}{\log \abs{t}} =
\begin{cases}
  \pm 1, & \text{if } s=s', \\
  0, & \text{otherwise},
\end{cases}
\end{equation*}
for $2\le i < k \le N$,
\begin{equation*}
\frac{1}{2\pi}\lim_{t\to 0} \frac{\int_{\c_s} \eta(M_{s'})} {\log \abs{t}} =
  \frac{1}{2\pi}\lim_{t\to 0} \frac{\int_{\c_s} \eta(L_{i,j},L_{k,l})} {\log \abs{t}} =
\begin{cases}
  \pm 1, & \text{if } s=s', \\
  0, & \text{otherwise},
\end{cases}
\end{equation*}
 and for $i=1, k>2$,
\begin{equation*}
\frac{1}{2\pi}\lim_{t\to 0} \frac{\int_{\c_s} \eta(M_{s'})} {\log \abs{t}} =
  \frac{1}{2\pi}\lim_{t\to 0} \frac{\int_{\c_{s}} -\eta(L_{1,j},L_{k,l})} {\log \abs{t}} =
\begin{cases}
  \pm 1, & \text{if } s=s', \\
  0, & \text{otherwise}.
\end{cases}
\end{equation*}
So the theorem is proved.
\end{proof}

\begin{remark}
For $ g \ge 1 $, in Theorem \ref{thm:limit} there are infinitely many different isomorphism classes of
curves over the algebraic closure of the base field in the family.

Namely, for $g=1$ the only two cases are
$ N=2 $ with both $ N_i $ equal 2,
and
$ N=3 $ and all $ N_i $ equal 1.
Using~\eqref{eqn:hyper1} and~\eqref{eqn:hyper2}
it is easy to see that the $j$-invariant is a non-constant function of $t$
in either case.

For $ g \ge 2 $, note that,
if a family of curves has a stable, singular fibre, and smooth general fibres
of genus $g$, it cannot  be isotrivial, since the image of the corresponding map to the moduli space
passes through a general point and a point on the boundary and hence cannot be constant.

In our family the fibre $ X_0 $ is the union of lines meeting transversely.
It is stable except if $N=2$ and one of the $N_i$ equals~2, the
other being larger than 2,
or if $N=3$ and precisely two of $N_1$, $N_2$ and $N_3$ equal~1.
In either case we can contract the $(-2)$-curves in $ X_0 $. The
resulting fibre now consists of two $\P^1$s
meeting transversely at $g+1$ points, which is stable. So if $g\geqslant 2$ the family is not isotrivial.
\end{remark}

Using our techniques, we can also obtain two independent elements
in $ \INTC $ in certain families of elliptic curves over a given real quadratic field.
The proof of the next proposition will show that
Lemma~\ref{lemma:basis}(1) applies to the family of curves over
$ \R $ obtained for each embedding of $ k $, and the regulators
are computed using the resulting basis of $ H_1(X,\Z)^- $,
with $ X $ the complex manifold associated to $ C \times_\Q \C $.

\begin{proposition}\label{prop:quadratic}
Let $k$ be a real quadratic field and $\OO_k$ its ring of integers.
\begin{enumerate}
\item [(1)]
For an integer $a$ with $\abs{a}>5$,
suppose $k$ is $\Q(\sqrt{a^2-16})$.
Consider the elliptic curve $ C $ over $ k $ defined by $ y^2 + (2x^2+ax+1)y + x^4 = 0 $.
If we write $4x^2+ax+1=(\v_1x+1)(\v_2x+1)$ in $k[x]$,
then the elements
\begin{equation*}
\widetilde{M}_l = 2\left\{\frac{y}{x^2}, \v_l x+1 \right\} \quad (l=1,2)
\end{equation*}
are in $\INTC$.
Their regulator $R=R(a)$ satisfies
\begin{equation*}
\lim_{\abs{a}\to\infty}\frac{R(a)}{\log^2\abs{a}} = 16
\,.
\end{equation*}

\item[(2)]
Fix $v \neq \pm 1$ in $\OO_k^*$ as well as $p$ and $q$ in $\OO_k$ with $pq=4$.
For $pv^n \ne \pm 2$, let $ C $ be the elliptic curve over $ k $
defined by $ y^2 + (2x^2+(p v^n + q v^{-n})x+1)y + x^4 = 0 $.
Then the elements
$ \widetilde{M}_1 = 2\left\{\frac{y}{x^2}, p v^n x+1 \right\} $
and
$ \widetilde{M}_2 = 2\left\{\frac{y}{x^2}, q v^{-n} x+1 \right\} $
are in $ \INTC $, and their regulator $ R = R(n) $ satisfies
\begin{equation*}
\lim_{n\to\infty}\frac{R(n)}{n^2} = 16 \log^2\abs{v}
\,.
\end{equation*}
\end{enumerate}
\end{proposition}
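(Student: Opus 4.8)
The plan is to identify each $ C $ with a member of the family~\eqref{eqn:hyper1} with $ g=1 $, to deduce integrality from Theorem~\ref{thm:main}, and to obtain the two regulator limits by a degeneration argument of the type used in Theorem~\ref{thm:limit}, carried out on each of the two real forms of $ C $. Write $ A $ for $ a $ in part~(1) and for $ p v^n + q v^{-n} $ in part~(2); in both cases $ A\in\OO_k $, and since $ 4x^2+Ax+1=(\v_1 x+1)(\v_2 x+1) $ with $ \v_1\v_2=4 $ the $ \v_l $ are roots of $ T^2-AT+4 $, hence algebraic integers (in part~(2) they are $ p v^n $ and $ q v^{-n} $). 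After the substitutions of Section~\ref{section:hyperelliptic} the curve $ y^2+(2x^2+Ax+1)y+x^4=0 $ is the member of~\eqref{eqn:hyper1} with $ g=1 $, $ \l=1 $, $ \a_1=A $; equivalently it is the normalisation of the closure of $ xy(y+1)(x+A)-1=0 $, which is of the form~\eqref{eqn:integralcurve} with $ N=2 $, $ N_1=N_2=2 $, $ N_3=0 $, coefficients in $ \OO_k $ and $ \l=1\in\OO_k^* $. Completing the square gives $ (2y+2x^2+Ax+1)^2=(Ax+1)(\v_1 x+1)(\v_2 x+1) $; since on $ C $ one has $ y=x^2 $ at the point above $ x=-1/\v_l $ and $ y/x^2=-1 $ at the point at infinity, a direct tame-symbol computation as in Lemma~\ref{K2Tlemma} shows $ \widetilde M_l=2\{y/x^2,\v_l x+1\}\in K_2^T(C) $, and $ \widetilde M_l\in\INTC $ follows by the model argument of Theorem~\ref{thm:main} (using the model at infinity as in Proposition~\ref{prop:relation}(2)) together with the relations among these symbols: $ \widetilde M_1+\widetilde M_2=2\{y/x^2,4x^2+Ax+1\} $ and $ 4x^2+Ax+1=-(y-x^2)^2/y $ on $ C $, which expresses the $ 2\widetilde M_l $ through $ \{-x^2/y,Ax+1\} $ (integral by Theorem~\ref{thm:main}) and symbols with divisors supported above $ x=0 $ and $ x=\infty $, exactly as in~\cite{Jeu06}.

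For the regulator, fix a real embedding $ \sigma $ of $ k $; then $ C^\sigma:=C\times_{k,\sigma}\R $ is an elliptic curve over $ \R $ and $ X=C\times_\Q\C=\bigsqcup_\sigma C^\sigma(\C) $, with complex conjugation acting on each summand since $ k $ is real. As the parameter ($ a $, resp.\ $ n $) tends to $ \infty $, the branch points $ -1/A^\sigma $ and $ -1/\v^\sigma_{\mathrm{big}} $ of $ C^\sigma\to\P^1 $ collide, where $ \v^\sigma_{\mathrm{big}} $ is whichever of $ \sigma(\v_1),\sigma(\v_2) $ is larger in absolute value, so $ C^\sigma $ degenerates to a nodal curve; near the node it has the form $ \mathcal X\mathcal Y-\tau_\sigma=0 $ with $ \tau_\sigma=-(\v^\sigma_{\mathrm{small}})^2/(4A^\sigma\v^\sigma_{\mathrm{big}}) $. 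As in the proofs of Lemma~\ref{lemma:smooth} and Lemma~\ref{lemma:basis} one produces an anti-invariant vanishing cycle $ \c_\sigma $, a small real loop encircling the shrinking cut, which together with a transverse cycle is a symplectic basis of $ H_1(C^\sigma(\C);\Z) $; hence $ \{\c_\sigma\} $ is a basis of $ H_1(C^\sigma(\C);\Z)^- $ and $ \{\c_{\sigma_1},\c_{\sigma_2}\} $ of $ H_1(X;\Z)^- $. On $ \c_\sigma $ the function $ y/x^2 $ has winding number $ \pm 1 $, while $ \v x+1 $ has a zero inside $ \c_\sigma $ precisely when $ \v=\v^\sigma_{\mathrm{big}} $, in which case $ \log\abs{\v^\sigma_{\mathrm{big}} x+1}\sim\tfrac12\log\abs{\tau_\sigma} $ on $ \c_\sigma $ while $ \v^\sigma_{\mathrm{small}} x+1 $ stays bounded away from $ 0 $ and $ \infty $ there; applying a variant of Lemma~\ref{lemma:limit} at the node (compare Remark~\ref{rem:thought}) then gives $ \langle\c_\sigma,2\{y/x^2,\v x+1\}\rangle=\pm\log\abs{\tau_\sigma}+O(1) $ if $ \v=\v^\sigma_{\mathrm{big}} $ and $ O(1) $ if $ \v=\v^\sigma_{\mathrm{small}} $.

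It then remains to evaluate the $ 2\times 2 $ determinant. Since $ \v_1\v_2=4 $ and $ \v^\sigma_{\mathrm{big}}\sim A^\sigma $ one has $ \log\abs{\tau_\sigma}\sim-4\log\abs a $ in part~(1) and $ \log\abs{\tau_\sigma}\sim-4n\log\abs v $ in part~(2). In part~(1), $ A\in\Q $, so $ C^{\sigma_1} $ and $ C^{\sigma_2} $ are the same real curve while the Galois action swaps $ \v_1\leftrightarrow\v_2 $; with $ f(\v)=\langle\c,2\{y/x^2,\v x+1\}\rangle $ the regulator matrix is $ \left(\begin{smallmatrix}f(\v_1)&f(\v_2)\\ f(\v_2)&f(\v_1)\end{smallmatrix}\right) $, with $ f(\v_{\mathrm{big}})=\pm\log\abs{\tau}+O(1) $ and $ f(\v_{\mathrm{small}})=O(1) $, so $ R(a)=\abs{f(\v_1)^2-f(\v_2)^2}\sim\log^2\abs{\tau}\sim16\log^2\abs a $. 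In part~(2), replacing $ v $ by $ v^{-1} $ and interchanging $ p,q $ if necessary, we may assume $ \abs v>1 $; then on $ C^{\sigma_1} $ the large root is $ p v^n $, so $ \widetilde M_1 $ is of the ``colliding'' and $ \widetilde M_2 $ of the ``escaping'' type, while on $ C^{\sigma_2} $ the large root is $ \sigma_2(q)v^n $, so the roles of $ \widetilde M_1 $ and $ \widetilde M_2 $ are reversed; hence the matrix is $ \left(\begin{smallmatrix}\pm\log\abs{\tau_{\sigma_1}}+O(1)&O(1)\\ O(1)&\pm\log\abs{\tau_{\sigma_2}}+O(1)\end{smallmatrix}\right) $ and $ R(n)=\abs{\det}\sim\abs{\log\abs{\tau_{\sigma_1}}\,\log\abs{\tau_{\sigma_2}}}\sim16 n^2\log^2\abs v $. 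Dividing by $ \log^2\abs a $, resp.\ $ n^2 $, gives the two limits.

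The step I expect to be the main obstacle is the local analysis behind the formula for $ \langle\c_\sigma,2\{y/x^2,\v x+1\}\rangle $: unlike in Lemma~\ref{lemma:limit} the node here moves with the parameter, and the smoothing is not literally of the form $ \mathcal X\mathcal Y=\tau_\sigma $ in the neighbourhood of the relevant zero of $ \v^\sigma_{\mathrm{big}} x+1 $ (that zero, one of the colliding branch points, lies near but not at the node), so one must either pass to rescaled coordinates in which the node is stationary and only $ \tau_\sigma $ varies, or adapt the differentiation argument in the proof of Lemma~\ref{lemma:limit} directly; one also has to verify that $ \{\c_{\sigma_1},\c_{\sigma_2}\} $ indeed has the anti-invariance and primitivity needed to be a basis of $ H_1(X;\Z)^- $, as asserted just before the proposition.
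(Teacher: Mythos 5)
Your overall strategy coincides with the paper's: identify $C$ with the $g=1$ member of the family~\eqref{eqn:hyper1} (equivalently with the normalisation of the closure of $xy(y+1)(x+A)-1=0$), get integrality from the hyperelliptic model argument, degenerate as the parameter grows, and read off the limit from a $2\times 2$ matrix with dominant diagonal whose off-diagonal entries are $o(\log\abs{A^\sigma})$ because the Galois action swaps the ``big'' and ``small'' roots $\v_1,\v_2$ between the two real embeddings. The asymptotics you compute ($\log\abs{\tau_\sigma}\sim -4\log\abs{a}$, resp.\ $\mp 4n\log\abs{v}$, and the resulting determinant $\sim 16\log^2\abs{a}$, resp.\ $16n^2\log^2\abs{v}$) agree with the paper's.

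The genuine gap is exactly the one you flag yourself: the limit formula $\langle\c_\sigma,2\{y/x^2,\v x+1\}\rangle=\pm\log\abs{\tau_\sigma}+O(1)$ is not established. Lemma~\ref{lemma:limit} is proved for a family $xyh(x,y)-t=0$ in which the node is stationary and $t$ is the deformation parameter; in your set-up the node arises from a collision of two moving branch points of the hyperelliptic model, the local equation is not of the form $\mathcal X\mathcal Y=\tau_\sigma$ near the relevant zero of $\v^\sigma_{\mathrm{big}}x+1$, and Lemma~\ref{lemma:basis} is likewise invoked outside the setting (degeneration of~\eqref{eqn:tcurve} to a union of lines) in which it was proved. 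The paper closes precisely this gap by the first of the two options you mention but do not carry out: reading the transformation from~\eqref{eqn:hypercase1} to~\eqref{eqn:hyper1} backwards to reach $x(x+A)y(y+1)-1=0$ with elements $2\{\tfrac{y}{y+1},\tfrac{x+\v_l}{x}\}$, and then rescaling $x\mapsto Ax$ and setting $t=1/A^2$. This puts the family \emph{literally} in the form~\eqref{eqn:tcurve} with lines $x$, $x+1$, $y$, $y+1$, so Lemmas~\ref{lemma:basis} and~\ref{lemma:limit} apply verbatim; the elements become $2\{\tfrac{y}{y+1},\tfrac{x+\v_l/A}{x}\}$, and the only extra step is to justify replacing $\v_l/A$ by its limit ($1$ for the big root, $0$ for the small one) inside $\int_{\c_t}\eta$, after which the big root gives the $R$-type element of Theorem~\ref{thm:limit} and the small root gives a symbol degenerating to $2\{\tfrac{y}{y+1},1\}=0$. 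Two smaller points: your attempted reduction of integrality to Theorem~\ref{thm:main} via $\widetilde M_1+\widetilde M_2=2\{y/x^2,-(y-x^2)^2/y\}$ only controls the sum, not each $\widetilde M_l$ separately, so you still need the argument of \cite[Theorem~8.3]{Jeu06} (which is what the paper cites, also for part~(2) where one must note $(pv^nx+1)(qv^{-n}x+1)=4x^2+(pv^n+qv^{-n})x+1$ with $pv^n$, $qv^{-n}$ algebraic integers); and in part~(2) the normalisation ``$\abs{v}>1$'' cannot hold for both embeddings simultaneously since $\abs{\sigma_1(v)}\abs{\sigma_2(v)}=1$, though this only affects signs that are absorbed by the absolute value in the determinant.
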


\begin{proof}
In~(1),
$\widetilde{M}_1$ and $\widetilde{M}_2$ are in $\INTC$ by the proof of \cite[Theorem~8.3]{Jeu06}.
Reading the transformation from~\eqref{eqn:hypercase1} to~\eqref{eqn:hyper1}
backwards, $C$ can be transformed into the curve defined by
$x(x+a)y(y+1)-1 = 0 $ and the elements become
$ 2\left\{\frac{y}{y+1}, \frac{x+\v_l}{x} \right\} $.
Replacing $x$ with $ax$, and letting $t=\frac{1}{a^2}$, we obtain
a curve defined by $x(x+1)y(y+1)-t = 0 $, with elements
$ 2\left\{\frac{y}{y+1}, \frac{x+\frac{\v_l}{a}}{x} \right\} $.
Clearly, $\lim_{\abs{a}\to\infty}\frac{\v_l}{a}$ equals~0 or~1,
and the two different embeddings of $ k $ into $ \C $ swap the
two cases. Using a limit argument directly on $ \int_{\c_t} \eta(\widetilde{M}_l) $
it is easy to see that we may, in the limit of $ R(a) $,
replace $ \v_l/a $ by its limit in $ \C $. The formula for $ R(a) $
then follows from Lemmas~\ref{lemma:basis} and~\ref{lemma:limit}.

For~(2), one first notices that the proof of~\cite[Theorem~8.3]{Jeu06}
also shows the $ \widetilde{M}_l $ are in $ \INTC $
because $ (p v^n x + 1 )( q v^{-n} x + 1) = 4 x^2 + (p v^n + q v^{-n}) x + 1 $.
The statement about the limit then follows as in~(1), using
$ a = p v^n + q v^{-n} $, $ \v_1 = p v^n $ and $ \v_2 = q v^{-n} $.
\end{proof}

\begin{remark}
In the first part of Proposition~\ref{prop:quadratic}, every real quadratic field with discriminant $ D $ occurs infinitely often
because $ c^2-Dd^2=1 $ has infinitely many integer solutions.
Also since $ C $ is already defined over $ \Q $, one could use the theory of
quadratic twists and the modularity of $C$ in order
to show the existence of two independent elements in $\INTC$ for $C/k$.
That would be far less explicit, but it would also give the
expected relation between the regulator and the $L$-function.

For the second part of Proposition~\ref{prop:quadratic},
using explicit calculations one checks that
the $j$-invariant is not a constant function of $n$,
for $ n \gg 0 $ is not an algebraic integer,
and, if the norm of $p$ in $\Q$ does not have absolute value $4$,
for $n \gg 0$ is not rational.
So for $ n \gg 0 $, the elliptic curve does not have complex
multiplication, and if the norm of $ p $ does not have absolute
value 4, the curve cannot be defined over~$ \Q $.
\end{remark}

\end{document}